\newtheorem{thm}{Theorem}[section]
\newtheorem{defn}[thm]{Definition}
\newtheorem{example}[thm]{Example}
\newcommand{\bmb}{\left( \begin{array}{rr}}
\newcommand{\enm}{\end{array}\right)}
\newcommand{\Z}{{\mathbb Z}}
\newcommand{\bt}{{\mathbf t}}
\newcommand{\bk}{{\mathbf k}}
\newcommand{\al}{{\alpha}}
\numberwithin{equation}{section}
\begin{document}

\title{Bessenrodt-Stanley polynomials and the octahedron recurrence}
\author{Philippe Di Francesco} 
\address{
Department of Mathematics, University of Illinois at Urbana-Champaign, MC-382, 
1409 W\ Green St., Urbana, IL 61801, U.S.A.
e-mail: philippe@illinois.edu
}

\begin{abstract}
We show that a family of multivariate polynomials recently introduced by Bessenrodt and Stanley can be expressed as solution
of the octahedron recurrence with suitable initial data. This leads to generalizations and explicit expressions as path or dimer partition functions.
\end{abstract}

\date{\today}
\maketitle
\tableofcontents

\section{Introduction}

In a recent publication, Bessenrodt and Stanley \cite{BS} introduced a family of multivariate polynomials attached to any
partition $\lambda$, generalizing a construction by Berlekamp \cite{BKP}. These were defined as weighted sums over sub-diagrams
of the Young diagram of $\lambda$.

In this paper, we show that these polynomials
may be viewed as the restriction of particular solutions of the octahedron recurrence relation in a three-dimensional half-space. The octahedron
recurrence is a special case of so-called $T$-system, introduced in the context of generalized Heisenberg integrable quantum 
spin chains with Lie group symmetries \cite{KNS,KNS11}.
The octahedron recurrence has received much attention over the last decade for its combinatorial interpretation in terms of 
domino tilings of the  Aztec diamond \cite{CEP} and generalizations thereof \cite{SPY,Henriques,DF,DFK12,DF13,DSG14}. 
More generally, the $A$ type $T$-systems possess the positive Laurent property: their solutions may be expressed  
as Laurent polynomials of any admissible initial data, with non-negative integer coefficients. 
In Ref.\cite{DFK08}, this property of the $T$-systems was connected to an underlying cluster 
algebra structure \cite{FZI}, and further confirmed by providing an explicit solution based on some representation using a flat
two-dimensional connection \cite{DF}, leading to expressions as partition function of weighted paths on oriented graphs or networks,
or equivalently of weighted dimer coverings of suitable bipartite planar graphs.

Our punchline is the following: the polynomials of Bessenrodt and Stanley were shown to obey particular determinantal identities \cite{BS},
which we interpret as initial conditions for the half-space octahedron recurrence, for which the partition $\lambda$ determines the geometry of the initial
data. Reversing the logic, and fixing the values of 
these determinants, we may express the solutions of the octahedron recurrence
as Laurent polynomials thereof, as path or dimer partition functions from \cite{DF,DF13}. 
This produces a general family of multivariate Laurent polynomials attached to any partition, that reduce to the polynomials of \cite{BS}, for special
choices of the initial data. 

The paper is organized as follows. 
In Section 2 we describe our new family of multivariate Laurent polynomials attached to a partition $\lambda$, and how they reduce
to the polynomials of \cite{BS}. 
Section 3 is devoted to 
a survey of the $T$-system and its general solutions in terms of network partition functions. We describe in particular the structure of admissible initial data, which take the form of initial value assignments along a ``stepped surface", and show how this data encodes an oriented weighted graph or network.
In Section 4, we show that the situation of \cite{BS} corresponds to choosing a particular ``steepest" initial data stepped surface, that forms a kind of fixed slope roof above the Young diagram of $\lambda$. The corresponding network is particularly simple, as it takes the shape of the Young diagram itself. Using the explicit solutions of the $T$-system, we write the solutions first as network partition functions (Theorem \ref{thpath},
Sect.4.3) and then as
dimer partition functions (Theorem \ref{thdim}, Sect.4.4).
Section 5 is devoted to a 3D generalization of the multivariate polynomials, by simply expressing the other solutions of the $T$-system ``under the roof". This leads us to a 3D object we call the pyramid of $\lambda$, to the boxes of which we associate other Laurent polynomials.
The latter reduce to polynomials when we apply the previous restriction of initial data. These are first expressed as  partition functions for families of non-intersecting paths on the same networks as before (Sect.5.3), and then shown to restrict to sums over nested 
sub-partitions of $\lambda$ with the same weights as before (Theorem \ref{genBS}, Sect.5.4). 
We gather a few concluding remarks in Section 6, where we present a different generalization for other boundary conditions of the octahedron recurrence.

\vskip1.cm

\noindent{\bf Acknowledgments}. We would like to thank R. Stanley for an illuminating seminar during the conference
``Enumerative Combinatorics" at the Mathematisches  ForschungsInstitut Oberwolfach in March 2014 and A. Sportiello for his great 
help in the early stage of this work. This work is supported by the NSF grant DMS 13-01636 and the 
Morris and Gertrude Fine endowment.

\section{A family of Laurent polynomials associated to Young diagrams}

We consider partitions/Young diagrams of the form $\lambda=(\lambda_1,...,\lambda_N)$ with $\lambda_i$ boxes in row $i$, and
$\lambda_1\geq \lambda_2\geq \cdots\geq \lambda_N\geq 1$. We represent the Young diagram $\lambda$ with rows
$1,2,...,N$ from top to bottom and justified on the left. The boxes of the diagram $\lambda$ are labeled by their coordinates
$(i,j)$, where $i\in [1,N]$ is the row number and $j\in [1,\lambda_i]$ the horizontal coordinate within the $i$-th row.
We write $(i,j)\in \lambda$ when the box $(i,j)$ belongs to $\lambda$. For later use, for any $(a,b)\in \lambda$, we 
define the sub-diagram $\lambda_{a,b}\subset \lambda$ obtained by erasing all the boxes of $\lambda$ that are above
the row $a$ and to the left of the column $b$ (it is the intersection of the South East corner from $(a,b)$ with $\lambda$).

We also consider the extended Young diagram $\lambda^*$ associated to $\lambda$, obtained by adjoining a border strip
from the end of the first row to the end of the first column of $\lambda$, namely with
$\lambda_1^*=\lambda_1+1$,
$\lambda_i^*=\lambda_{i-1}+1 $, $i=2,...,N+1$.
For instance the extended diagram of $\lambda=(3,2)$ is $\lambda^*=(4,4,3)$. Note that $\lambda$ can be recovered from $\lambda^*$
by removing its first row and column.
We define the squares $S_{a,b}$ of $\lambda^*$ to be the square arrays of the form 
$S_{a,b}=\{ (i,j), \, a\leq i\leq n_{a,b}+a-1, \, b\leq j\leq n_{a,b}+b-1\}$, and such that $S_{a,b}\subset \lambda^*$
and $n_{a,b}\geq 1$ is maximal. 
The
box $(a,b)$ is called the North West (NW) corner of the square $S_{a,b}$. In other words, the square $S_{a,b}$ is the largest
square array of boxes with NW corner $(a,b)$ that fits in $\lambda^*$. The integer $n_{a,b}$ is called the size of $S_{a,b}$.
In particular, each box $(u,v)$ of the border strip
$\lambda^*\setminus \lambda$ is itself a square of size $1$, $S_{u,v}=\{(u,v)\}$.

\begin{defn}\label{lopdef}
We fix arbitrary non-zero parameters $\{\theta_{i,j}\}_{(i,j)\in \lambda^*}$ attached to the boxes of $\lambda^*$.
We now associate to each box $(i,j)\in \lambda^*$ a function $p_{i,j}\equiv p_{i,j}(\theta_\cdot)$ of all the parameters $\theta_{i,j}$
defined by  the identities: 
\begin{equation}\label{condet} \det\left(\left( p_{a+i-1,b+j-1}(\theta_\cdot) \right)_{1\leq i,j \leq n_{a,b}}\right) 
=\theta_{a,b}\qquad {\rm for}\, {\rm all}\, (a,b)\in \lambda^*
\end{equation}
namely we fix the value of the determinant of the array of functions $p_{i,j}(\theta_\cdot)$ on each square $S_{a,b}$ to be the parameter $\theta_{a,b}$.
\end{defn}

In particular, we have $p_{u,v}(\theta_\cdot)=\theta_{u,v}$ for all $(u,v)\in \lambda^*\setminus\lambda$.  That \eqref{condet} determines
the $p_{i,j}$'s uniquely will be a consequence of the rephrasing of the problem as that of finding the solution 
of the $T$-system or octahedron recurrence, subject to some particular initial condition. As a consequence of the positive 
Laurent property of the $T$-system,  we have the following main result:

\begin{thm}\label{main}
For all $(a,b)\in \lambda$, the function $p_{a,b}(\theta_\cdot)$ is a Laurent polynomial of the $\theta_{i,j}$'s with non-negative integer coefficients.
\end{thm}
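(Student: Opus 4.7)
Proof plan. The plan, following the paper's own roadmap, is to identify the array $(p_{i,j})$ with the $k=1$ layer of a three-dimensional solution of the octahedron recurrence whose initial data are prescribed on a stepped ``roof'' of height $n_{a,b}$ above each box $(a,b)\in \lambda^*$. Concretely, for $(a,b)\in \lambda^*$ and $1\le k\le n_{a,b}$, I would introduce the auxiliary variables
\begin{equation*}
T_{a,b,k}\ :=\ \det\bigl(p_{a+i-1,b+j-1}\bigr)_{1\le i,j\le k},
\end{equation*}
so that $T_{a,b,1}=p_{a,b}$ while \eqref{condet} reads $T_{a,b,n_{a,b}}=\theta_{a,b}$. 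The classical Desnanot--Jacobi identity, applied to the $(k{+}1)\times(k{+}1)$ submatrix of $(p_{i,j})$ with NW corner $(a,b)$, yields
\begin{equation*}
T_{a,b,k+1}\,T_{a+1,b+1,k-1}\ =\ T_{a,b,k}\,T_{a+1,b+1,k}-T_{a+1,b,k}\,T_{a,b+1,k},
\end{equation*}
which, after the standard gauge/coordinate change placing $(a,b,k)$ on the $A$-type octahedron lattice, is exactly the octahedron recurrence surveyed in Section 3.

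The second step is to verify that the roof $\{(a,b,n_{a,b}):(a,b)\in \lambda^*\}$ is an admissible stepped initial-data surface for the half-space octahedron recurrence. This reduces to the elementary fact that, by maximality of $S_{a,b}\subset \lambda^*$, the height $n_{a,b}$ varies by exactly $\pm 1$ between lattice-adjacent boxes of $\lambda^*$; in particular $n_{u,v}=1$ on the border strip, forcing $p_{u,v}=\theta_{u,v}$, as noted just after Definition \ref{lopdef}. Once admissibility is in place, the octahedron recurrence uniquely reconstructs all $T_{a,b,k}$ below the roof from the data on it, giving simultaneously existence and uniqueness of the $p_{a,b}$ satisfying \eqref{condet}. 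The positive Laurent property of the $A$-type $T$-system, surveyed in Section 3 and realized there by the explicit network and dimer partition function formulas that will also underlie Theorems \ref{thpath} and \ref{thdim}, then implies at once that each $p_{a,b}=T_{a,b,1}$ is a Laurent polynomial in the $\theta_{i,j}$ with non-negative integer coefficients.

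The main obstacle I anticipate is the combinatorial admissibility check: one must confirm that the local configurations of the height function $n_{\cdot,\cdot}$ around each vertex of $\lambda^*$ match the permissible local shapes of a stepped surface in the octahedron lattice, and that the embedding of the border strip $\lambda^*\setminus\lambda$ corresponds to the correct boundary of the three-dimensional half-space. A secondary bookkeeping issue is reconciling the partition-box coordinates on $\lambda^*$ with the $(i,j,k)$-coordinates of the $T$-system, including the sign flip (gauge transformation) that converts the minus sign of Desnanot--Jacobi into the plus sign of the octahedron recurrence. Once these points are settled, both the existence/uniqueness statement and the positivity statement follow as direct applications of the general theory recalled in Section 3.
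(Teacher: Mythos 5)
Your plan is essentially the paper's own argument: Section 4 identifies \eqref{condet} with the initial data of the $A_{\infty/2}$ $T$-system on the ``steepest stepped surface'' over $\lambda$ via the Hankel-determinant form of Desnanot--Jacobi (equation \eqref{wrons}), derives existence and uniqueness of the $p_{a,b}$ from the $T$-system evolution (Theorem \ref{pT}), and obtains positivity from the explicit network/path solution (Theorem \ref{thpath}). The one caution is that your admissibility criterion must be phrased in the rotated FCC coordinates $(i,j,k)$ rather than in box coordinates --- lattice-adjacent boxes of $\lambda^*$ can have \emph{equal} square sizes $n_{a,b}$ (e.g.\ $n_{1,2}=n_{1,3}=2$ for $\lambda^*=(4,4,3)$) --- which is precisely what the paper's piecewise-linear description of the steepest stepped surface takes care of.
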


\begin{example}\label{firstex}
Let us consider the Young diagram $\lambda=(2,1)$, with the following $\theta$ variables (one per box of $\lambda^*=(3,3,2)$):
$$ \begin{matrix} \theta_{1,1}=a & \theta_{1,2}=b & \theta_{1,3}=c \\
\theta_{2,1}=d & \theta_{2,2}=e & \theta_{2,3}=f \\
\theta_{3,1}=g & \theta_{3,2}=h & & \end{matrix} $$
The polynomials $p_{a,b}(\theta_\cdot)$ are:
$$\begin{matrix} 
p_{1,1}=\frac{a f h+(b+c e)(d+e g)}{e f h} 
& p_{1,2}=\frac{b+c e}{f} & p_{1,3}=c \\
p_{2,1}=\frac{d+e g}{h} & p_{2,2}=e & p_{2,3}=f \\
p_{3,1}=g & p_{3,2}=h & & \end{matrix} $$

\end{example}

The Laurent polynomials $p_{a,b}(\theta_\cdot)$ attached to the Young diagram $\lambda$ reduce to the polynomials introduced by 
Bessenrodt and Stanley in \cite{BS} when the variables $\theta_{i,j}$ are restricted as follows. Let $x_{i,j}$, $(i,j)\in \lambda$ be new variables
attached to the boxes of $\lambda$. 

\begin{thm}\label{restrictoBS}
Under the restrictions:
\begin{eqnarray}
\theta_{i,j}&=&1 \qquad {\rm for}\, {\rm all}\, (i,j)\in \lambda^*\setminus \lambda \ ,\nonumber \\
\theta_{a,b}&=& \prod_{r=0}^{n_{a,b}-1} \prod_{(i,j)\in \lambda_{a+r,b+r}} x_{i,j} \quad {\rm for}\, {\rm all}\, (a,b)\in \lambda\, , \label{bsrestrict}
\end{eqnarray}
the Laurent polynomials $p_{a,b}(\theta_\cdot)$ of Def.\ref{lopdef} reduce to the polynomials $p_{a,b}(x_\cdot)$ of \cite{BS}. 
\end{thm}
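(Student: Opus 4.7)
The strategy is to exploit the uniqueness of the system (\ref{condet}), which (as noted just after Def.\ref{lopdef} and established via the $T$-system reformulation in Sections 3--4) determines the functions $p_{i,j}$ from the right-hand side data $\{\theta_{a,b}\}$. Thus it suffices to verify that the Bessenrodt--Stanley polynomials $p_{a,b}(x_\cdot)$ satisfy the same determinantal identities as (\ref{condet}), with the right-hand sides prescribed by (\ref{bsrestrict}); by uniqueness they then coincide with the specializations $p_{a,b}(\theta_\cdot)|_{\theta = \theta(x)}$.

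First I would check the boundary data. For $(u,v) \in \lambda^* \setminus \lambda$, the maximal square $S_{u,v}$ has size $n_{u,v} = 1$, so (\ref{condet}) degenerates to $p_{u,v} = \theta_{u,v}$, and the restriction imposes $p_{u,v} = 1$. This matches the normalization convention of \cite{BS}, in which the added border strip carries no $x$-weights. So the boundary portion of the $p$'s agrees by inspection.

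Next, for each $(a,b) \in \lambda$, I would recall from \cite{BS} the determinantal identity that their polynomials satisfy on the $n_{a,b} \times n_{a,b}$ square $S_{a,b}$, and verify it takes the form
\[
\det\bigl(p_{a+i-1,b+j-1}(x_\cdot)\bigr)_{1 \le i,j \le n_{a,b}} \;=\; \prod_{r=0}^{n_{a,b}-1} \prod_{(i,j)\in \lambda_{a+r,b+r}} x_{i,j}.
\]
The right-hand side is a product over the union of the nested South-East corner sub-diagrams $\lambda_{a+r,b+r} \subseteq \lambda$ ($r=0,\ldots,n_{a,b}-1$), i.e.\ a staircase region of $\lambda$ attached to the anti-diagonal of $S_{a,b}$. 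Since the diagonal boxes $(a+r,b+r)$ lie in $\lambda$ by maximality of $n_{a,b}$, this region is well-defined, and the product matches precisely $\theta_{a,b}$ as defined in (\ref{bsrestrict}). Combined with the boundary check, the BS polynomials thus satisfy (\ref{condet}) with right-hand sides given by (\ref{bsrestrict}), and the desired equality follows from uniqueness.

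The main obstacle is the last point: extracting from \cite{BS} the precise determinantal identities their polynomials obey on every maximal sub-square of $\lambda^*$, and matching the prescribed right-hand side monomial with the staircase product appearing in (\ref{bsrestrict}). The geometric content — namely, that successive diagonal shifts of the SE corner sub-diagrams tile out exactly the monomial appearing in the BS determinantal identity — is the only nontrivial verification; once this is done, uniqueness via the $T$-system interpretation closes the argument immediately, with no further calculation with explicit formulas required.
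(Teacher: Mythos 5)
Your proposal is correct and coincides essentially with the paper's first proof in Section \ref{sectobs}: one checks $p_{u,v}=1$ on the border strip, invokes the determinantal identity of \cite{BS} computing the minor on each square $S_{a,b}$ as the monomial $Z_{a,b}$ of \eqref{zdef} (which is exactly the right-hand side of \eqref{bsrestrict}), and concludes by uniqueness of the $T$-system solution. (The paper additionally supplies a second, direct combinatorial proof via the network path expansion, which your plan does not attempt; also note that the diagonal boxes $(a+r,b+r)$ of $S_{a,b}$ may land in $\lambda^*\setminus\lambda$, in which case the corresponding factor is an empty product equal to $1$ --- a harmless adjustment to your maximality remark.)
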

In particular, the change of variables cancels all denominators and produces a {\it polynomial} of the variables $x_{i,j}$.

In Section \ref{applione}, we shall construct each polynomial $p_{a,b}(\theta_\cdot)$ explicitly, as the partition function for paths on a weighted
oriented graph (network) ${\mathcal N}_{a,b}$ associated to $\lambda_{a,b}$, and alternatively as the partition function of the dimer model on a suitable bipartite graph ${\mathcal G}_{a,b}$. We give two independent proofs of Theorem \ref{restrictoBS} in Sect.\ref{sectobs}. 

\section{$A_{\infty/2}$ $T$-system and its solutions}\label{tsysect}

\subsection{$T$-system and initial data}

In the case of $A$ type
Lie algebras, the $T$-system takes the form (also known as the octahedron recurrence):
\begin{equation}\label{octa}
T_{i,j,k+1}T_{i,j,k-1}=T_{i+1,j,k}T_{i-1,j,k}+T_{i,j+1,k}T_{i,j-1,k} 
\end{equation}
where the indices of the indeterminates $T_{i,j,k}$ are restricted to be vertices of the Face-Centered Cubic (FCC) lattice 
$L_{FCC}=\{(i,j,k)\in \Z^3,\, i+j+k=1\, {\rm  mod}\,  2\}$. This may be viewed as a 2$+$1-dimensional evolution in the discrete
time variable $k$, while $i,j$ refer to space indices.

The $A_r$ condition consists in further restricting 
$i\in [1,r]$, and to impose the additional 
boundary conditions
\begin{equation}\label{affcond} T_{0,j,k}=T_{r+1,j,k}=1 \qquad (j,k\in \Z)\end{equation}
In other words, the $A_r$ $T$-system solutions are those of the octahedron equation in-between two parallel planes $i=0$ and $i=r+1$,
that take boundary value $1$ along the two planes $i=0$ and $i=r+1$.

In the following, we will concentrate on the so-called $A_{\infty/2}$ $T$-system, where we only keep the restriction $i\geq 1$
and the boundary condition 
\begin{equation}\label{plus} T_{0,j,k}=1\end{equation}
In turn, the solutions of the $A_{\infty/2}$ $T$-system are those of the octahedron equation in the half-space $i\geq 0$, that take 
boundary values $1$ along the plane $i=0$.

The system (\ref{octa},\ref{plus}) must be supplemented by some admissible initial data $(\bk,\bt)$, consisting of:
\begin{itemize}
\item{}
(1) a ``stepped surface"
$\bk=\{ (i,j,k_{i,j})\}_{i\in\Z_+;j\in\Z}$, such that $|k_{i+1,j}-k_{i,j}|=|k_{i,j+1}-k_{i,j}|=1$ for all $i,j$; 

\item{}
(2) the following assignments of initial values $\bt=\{t_{i,j}\}_{i\in \Z_{>0};j\in \Z}$:
\begin{equation}T_{i,j,k_{i,j}}=t_{i,j} \qquad (i\in \Z_{>0};j\in Z) \end{equation}
\end{itemize}

\subsection{Solution}

In Ref. \cite{DF} the $A_r$ $T$-system was solved for arbitrary $r$ and arbitrary initial data. Note that for any finite $(i,j,k)$
above the initial data stepped surface (i.e. $k\geq k_{i,j}$), the solution $T_{i,j,k}$ only depends on finitely many 
initial values $t_{i,j}$, hence for $r$ large enough  the solution is independent of $r$, so that the solution
for the case $A_{\infty/2}$ is trivially obtained from that of $A_r$. We now describe this solution.

The solution proceeds in two steps. First, one eliminates all the variables $T_{i,j,k}$ for $i\geq 2$ in terms of the
values $T_{j,k}:=T_{1,j,k}$ by noticing that the relation \eqref{octa} is nothing but the Desnanot-Jacobi (aka Dodgson condensation)
formula for the following Hankel or discrete Wronskian determinants:
\begin{equation}
W_{i,j,k}=\det\left(\left(T_{j+b-a,k+i+1-a-b}\right)_{1\leq a,b\leq i}\right)
\end{equation}
which, together with the initial condition $W_{0,j,k}=1$ allows to identify 
\begin{equation}\label{wrons} T_{i,j,k}=W_{i,j,k}
\end{equation}
for all $i\geq 2$.

The second step consists of writing a compact form for $T_{j,k}$ for all $k\geq k_{1,j}$, by use of a formulation of the relation
\eqref{octa} as the flatness condition for a suitable $GL_2$ connection. The solution is best expressed in the following manner.
First we associate to the initial data stepped surface $\bk$ a bi-colored (grey/white) triangulation with the vertices of $\bk$,
defined via the following local rules, where we indicate the value of $k_{i,j}$ at each vertex 
of the projection onto the $i,j$ plane:
\begin{equation}\label{eqrules}
\raisebox{-1.1cm}{\hbox{\epsfxsize=14.cm \epsfbox{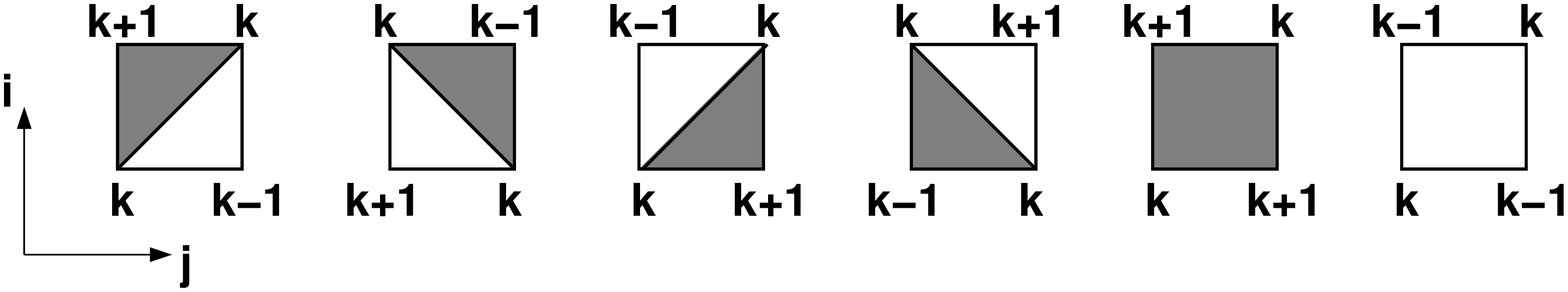}}} 
\end{equation}
The last two cases give rise to two choices  of triangulations each, due to the tetrahedron ambiguity (there are two ways of defining
a pair of adjacent triangles with the vertices of a regular tetrahedron, namely the two choices of diagonals of the white/grey square
in projection), but our construction is independent of these choices.
We may now decompose the stepped surface into lozenges made of a grey and a white triangle sharing an edge perpendicular to
the $k$ axis, and we supplement the single triangles of the bottom layer $i=1,2$ with a lower triangle of 
opposite color with bottom vertex in the $i=0$ plane. 

\begin{example}\label{flatex}
Let us consider the ``flat" initial data surface $\bk_0$ with $k_{i,j}=i+j +1\, {\rm mod} \, 2\in \{0,1\}$. Picking a particular choice
of diagonal in the tetrahedron ambiguities, we may decompose the surface as follows:
$$ \raisebox{1.1cm}{\hbox{\epsfxsize=7.cm \epsfbox{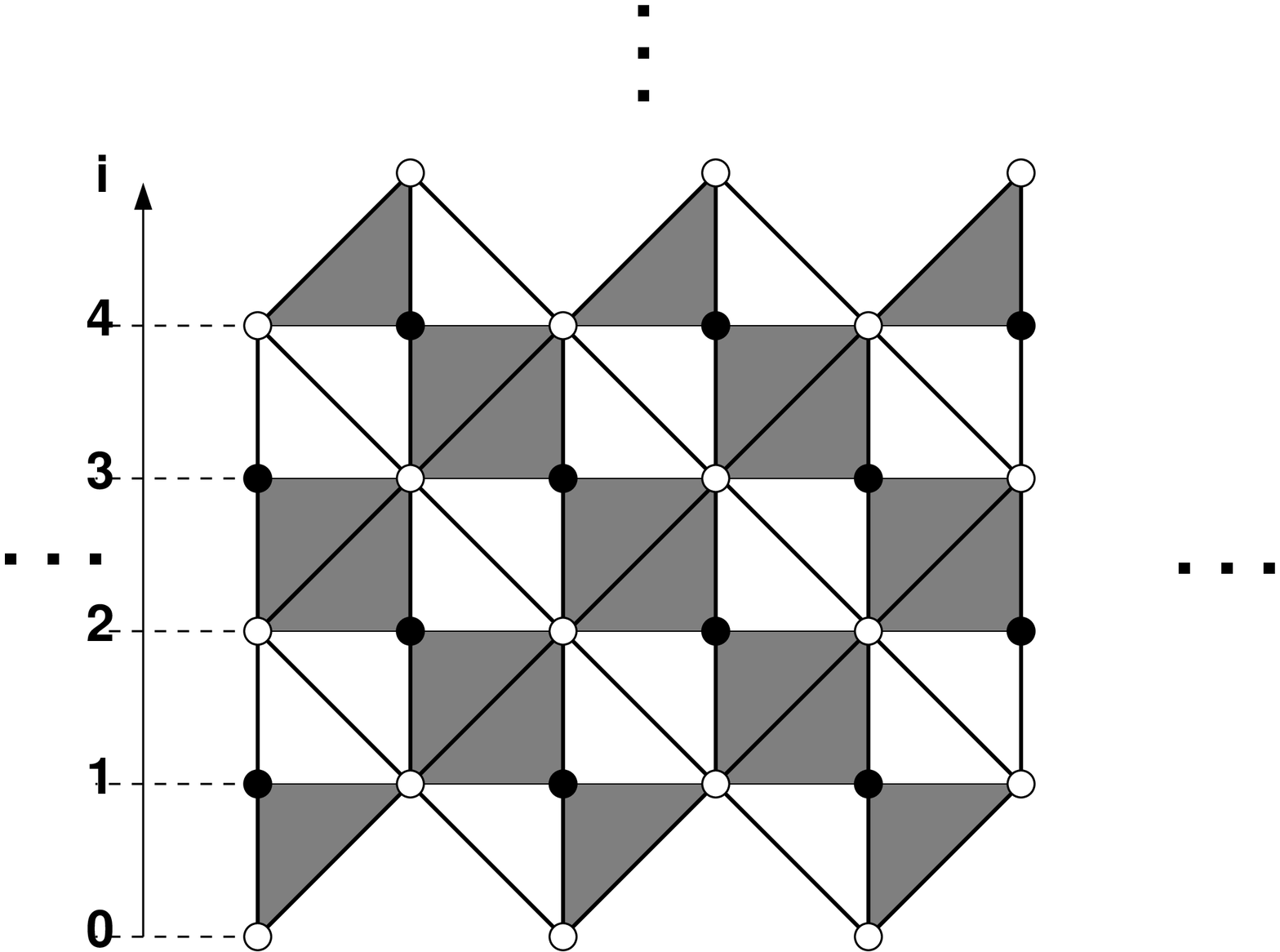}}}    $$
where we have represented with solid (resp empty) dots the vertices with $k=1$ (resp. $k=0$).
\end{example}

To each lozenge, we associate a $2\times 2$ matrix according to the following rule:
\begin{equation}
\raisebox{-1.2cm}{\hbox{\epsfxsize=2.cm \epsfbox{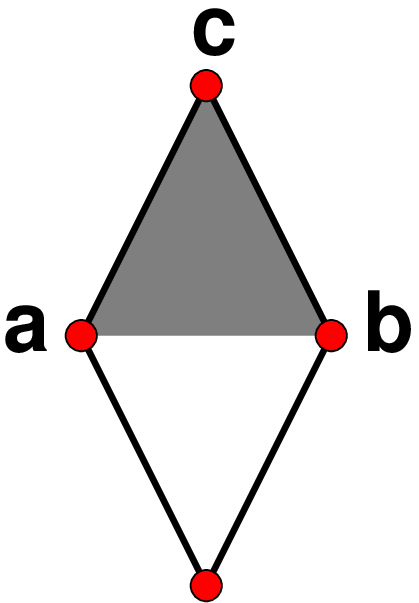}}}\to  U(a,b,c)=\begin{pmatrix} 1 & 0 \\ \frac{c}{b} & \frac{a}{b} \end{pmatrix}\qquad 
\raisebox{-1.4cm}{\hbox{\epsfxsize=2.cm \epsfbox{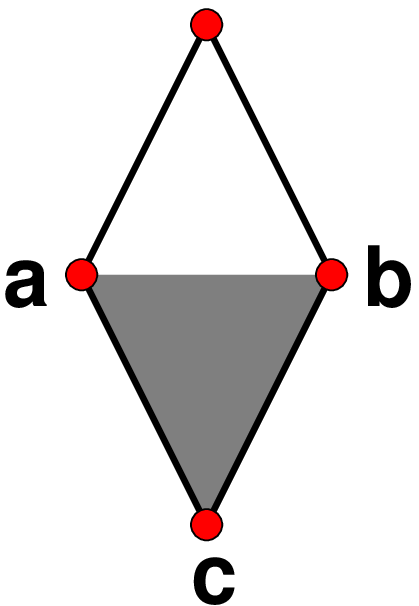}}}\to V(c,a,b)=\begin{pmatrix} \frac{a}{b} & \frac{c}{b}  \\0 & 1 \end{pmatrix}
\end{equation}
Note that the arguments $a,b,c$ of the matrices are the values of initial data attached to the three vertices of the grey lozenge.
These matrices have the remarkable property that they form a flat connection on the solutions of the $T$-system, namely we have
\begin{equation}\label{flat}V(u,a,b)\, U(b,c,v)=U(a,x,v)\, V(u,x,c)\qquad {\rm iff}\qquad xb=ac+uv \end{equation}
For some fixed integer $r$ (large enough), we may embed the above matrices in $r$-dimensional space, by defining the 
$(r+1)\times (r+1)$ matrices
$U_i,V_i$, equal to the $(r+1)\times (r+1)$ identity matrix, with the central $2\times 2$ block with row and column labels $i,i+1$ replaced by $U,V$.
The position $i$ corresponds to the $i$ coordinate of the two middle vertices of the corresponding lozenge.
We now associate to each ``slice" $S=[0,r+1]\times [j_0,j_1]$ of the initial data surface namely with vertices $\{(i,j,k_{i,j})_{i,j\in S}\}$
the matrix $M(j_0,j_1)$ equal to the product over all lozenge matrices of the slice, taken in the order of appearance of the lozenges from left to right.
This matrix is independent of the choices pertaining to the tetrahedron ambiguity above. 

Given any $j,k$ with $k\geq k_{1,j}$, let us define the left (resp. right) projections $j_0$ (resp. $j_1$) onto the stepped surface $\bk$
to be the largest (resp. smallest) integer $\ell$ such that $j-\ell=k-k_{1,\ell}$ (resp. $j-\ell=k_{1,\ell}-k$). This is illustrated below:
$$\raisebox{0.0cm}{\hbox{\epsfxsize=10.cm \epsfbox{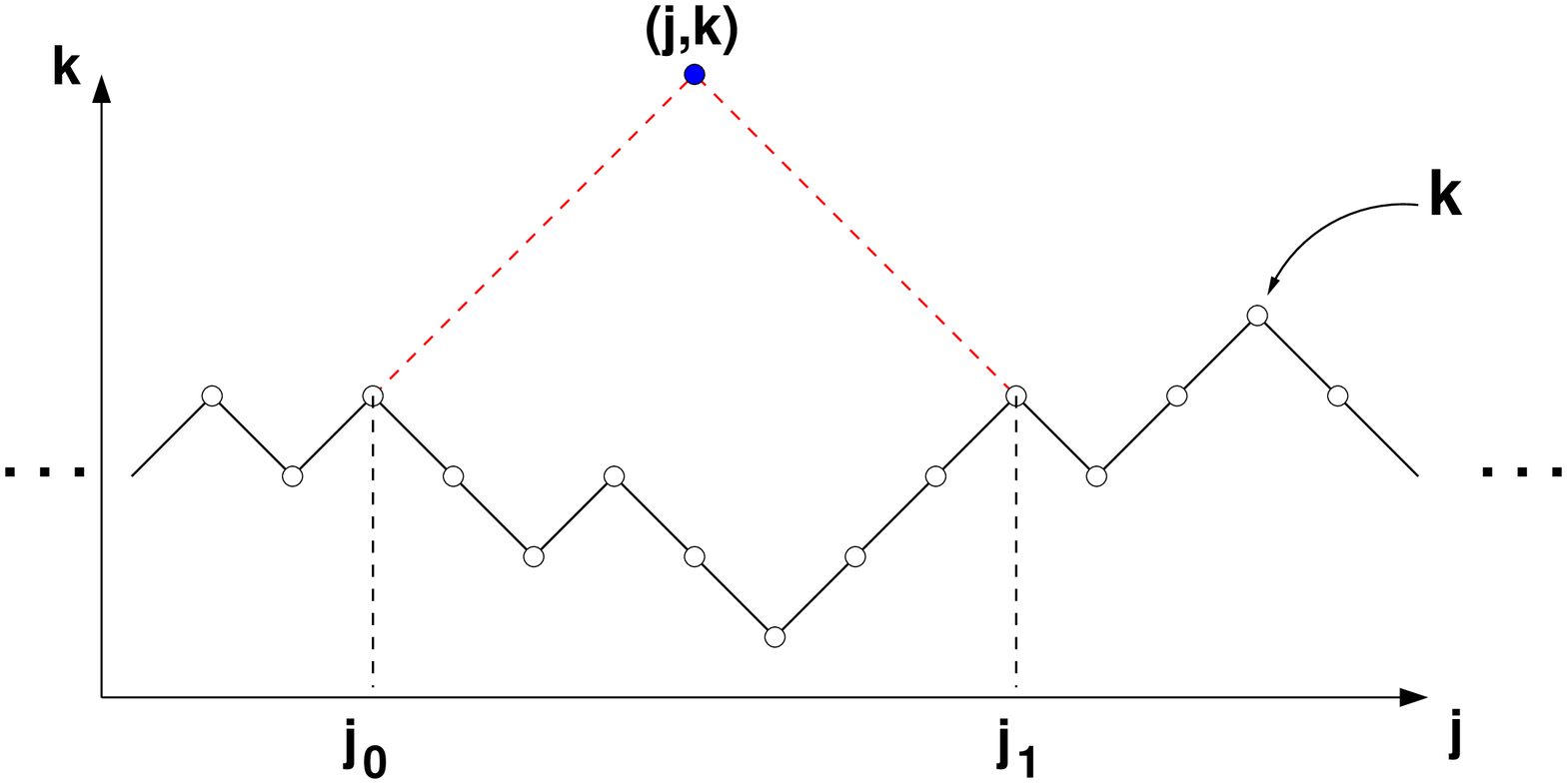}}} $$
where we have represented by empty dots the vertices of the intersection of the stepped surface $\bk$ with the $i=1$ plane.
Finally the solution $T_{j,k}$ may be written
as:
\begin{equation} T_{j,k}=M(j_0,j_1)_{1,1} \, t_{1,j_1}\end{equation}
independently of $r$ for $r$ large enough. The proof in \cite{DF} relies on the flatness condition \eqref{flat}.
Note that the positive Laurent property for $T_{j,k}$ as a function of the initial data $\{t_{i,j}\}$ is manifest,
as the entries of the matrices $U,V$ are themselves Laurent monomials of the initial data with coefficient 1 or 0.

\subsection{Network interpretation}

The matrix $M(j_0,j_1)$ can be interpreted as the weighted adjacency matrix of some oriented graph $N(j_0,j_1)$
(referred to as a ``network"), constructed as follows. First interpret the matrices $U_i,V_i$ as elementary ``chips":
\begin{equation}\label{rep2UVi}
U_i(a,b,u)=
\raisebox{-.4cm}{\hbox{\epsfxsize=2.2cm \epsfbox{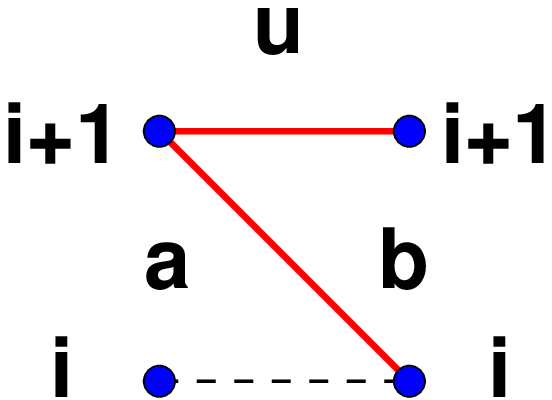}}}\qquad V_i(v,a,b)=
\raisebox{-.9cm}{\hbox{\epsfxsize=2.2cm \epsfbox{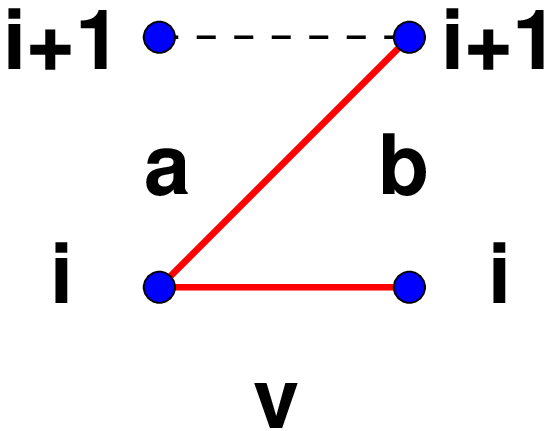}}}
\end{equation}
with two entrance connectors $i,i+1$ on the left and two exit connectors $i,i+1$ on the right, and such that the $(x,y)$ matrix
element is the weight of the oriented edge form entry connector $x$ to exit connector $y$ (here by convention all edges 
will be oriented from left to right). Note we have represented by a dashed line the ``trivial" oriented edges with weight $1$.
The top edge in the $U_i(a,b,u)$ chip has therefore weight $a/b$, the diagonal one $u/b$, etc. The arguments $a,b$, etc. appear
as face variables in the network. Any product of such matrices can be interpreted as a larger network, obtained by concatenating
the corresponding chips. We call $N(j_0,j_1)$ the network corresponding to the matrix $M(j_0,j_1)$.
Now we may interpret the matrix element $M(j_0,j_1)_{1,1}$ as the partition function for paths on $N(j_0,j_1)$
from the leftmost entry connector $1$ to the rightmost exit connector $1$. This give a nice explicit combinatorial description of 
the solution $T_{j,k}$ in terms of the initial data $\{t_{i,j}\}$, which form the face labels of the network $N(j_0,j_1)$. This is summarized in
the following:

\begin{thm}\label{netsol}
The solution $T_{1,j,k}=T_{j,k}$ of the $A_{\infty/2}$ $T$-system with initial data $(\bk,\bt)$ is $t_{1,j_1}$ times 
the partition function of paths from 
the entry connector $1$ to exit connector $1$ on the network $N(j_0,j_1)$, where $j_0,j_1$ are the left/right projections of $(j,k)$
onto the onto the stepped surface $\bk$.
\end{thm}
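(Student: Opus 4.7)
The plan is to combine the algebraic formula already established earlier in the subsection with the standard transfer-matrix translation of a matrix product into a weighted path count. Specifically, the previous subsection gives $T_{j,k} = M(j_0,j_1)_{1,1}\, t_{1,j_1}$, where $M(j_0,j_1)$ is the ordered product of chip matrices $U_i,V_i$ associated to the lozenges of the slice $[0,r+1]\times[j_0,j_1]$, and $r$ is taken large enough so that the $(1,1)$ entry stabilizes. Hence the theorem reduces to the purely combinatorial identity
\[
M(j_0,j_1)_{1,1} \;=\; \sum_{\pi:\,1 \to 1} \mathrm{wt}(\pi),
\]
where the sum runs over oriented paths on $N(j_0,j_1)$ from the leftmost entry connector $1$ to the rightmost exit connector $1$, and $\mathrm{wt}(\pi)$ is the product of the edge weights along $\pi$.

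I would prove this identity by induction on the number of chips in the product, which is the basic transfer-matrix/LGV argument at the level of a single matrix entry. For the base case, the single-chip matrices $U_i(a,b,u)$ and $V_i(v,a,b)$ are, by the very definition in \eqref{rep2UVi}, the weighted adjacency matrices of a chip with $r+1$ entry connectors on the left and $r+1$ exit connectors on the right: the $(x,y)$ entry equals the weight of the oriented edge from entry $x$ to exit $y$, with the identity blocks outside the central $2\times 2$ block producing the dashed weight-$1$ edges that carry the other levels through the chip unchanged. For the inductive step, concatenating two subnetworks $N_1,N_2$ with adjacency matrices $M_1,M_2$ identifies the exit connectors of $N_1$ with the entry connectors of $N_2$; any oriented path from entry $x$ to exit $y$ of the full network decomposes uniquely, according to its value $z$ at the interface, as a path $x\to z$ in $N_1$ followed by a path $z\to y$ in $N_2$, with weights multiplying. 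Summing over $z$ yields $(M_1 M_2)_{xy}$, and iterating gives the claim. Specializing to $x=y=1$ finishes the proof.

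The only genuine care is bookkeeping: one must verify that the left-to-right ordering of lozenges on the stepped surface (with each $U_i$ or $V_i$ embedded at the row label $i$ corresponding to the height of the two middle vertices of its lozenge) is exactly the spatial left-to-right order of chips in $N(j_0,j_1)$, and that the choice of diagonal in the tetrahedron ambiguity does not affect the network partition function. The latter is immediate from the flatness relation \eqref{flat}, which says precisely that the two factorizations $V\,U = U'\,V'$ of the matrix attached to a white/grey square are equal whenever the central face variable is determined by the octahedron relation—so the two triangulations give the same matrix $M(j_0,j_1)$ and hence the same path partition function.

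The main obstacle, beyond this indexing check, is really nothing new: the positivity and Laurent-monomial form of each entry of $U_i,V_i$ make the path interpretation manifest, and the whole content of the theorem is to repackage the formula of \cite{DF} geometrically. Once the chip-concatenation/matrix-product dictionary is set up cleanly, the proof is a one-line induction.
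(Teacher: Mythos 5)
Your proposal is correct and follows essentially the same route as the paper: the paper likewise takes the formula $T_{j,k}=M(j_0,j_1)_{1,1}\,t_{1,j_1}$ from \cite{DF} as given and then reads off the $(1,1)$ entry of the chip-matrix product as a path partition function on the concatenated network, with the tetrahedron ambiguity handled by the flatness relation \eqref{flat}. You merely make explicit the standard matrix-product/path-concatenation induction that the paper treats as immediate.
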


In \cite{DF}, it was further shown that $T_{i,j,k}$ for $i\geq 2$ is proportional to the partition functions of $i$ non-intersecting
paths on the network $N(j_0,j_1)$. As such, it enjoys the positive Laurent property as well.

\begin{example}
The network $N$ corresponding to a sample slice $S$ of flat initial data surface (see Example \ref{flatex}) reads:
$$ \raisebox{1.1cm}{\hbox{\epsfxsize=14.cm \epsfbox{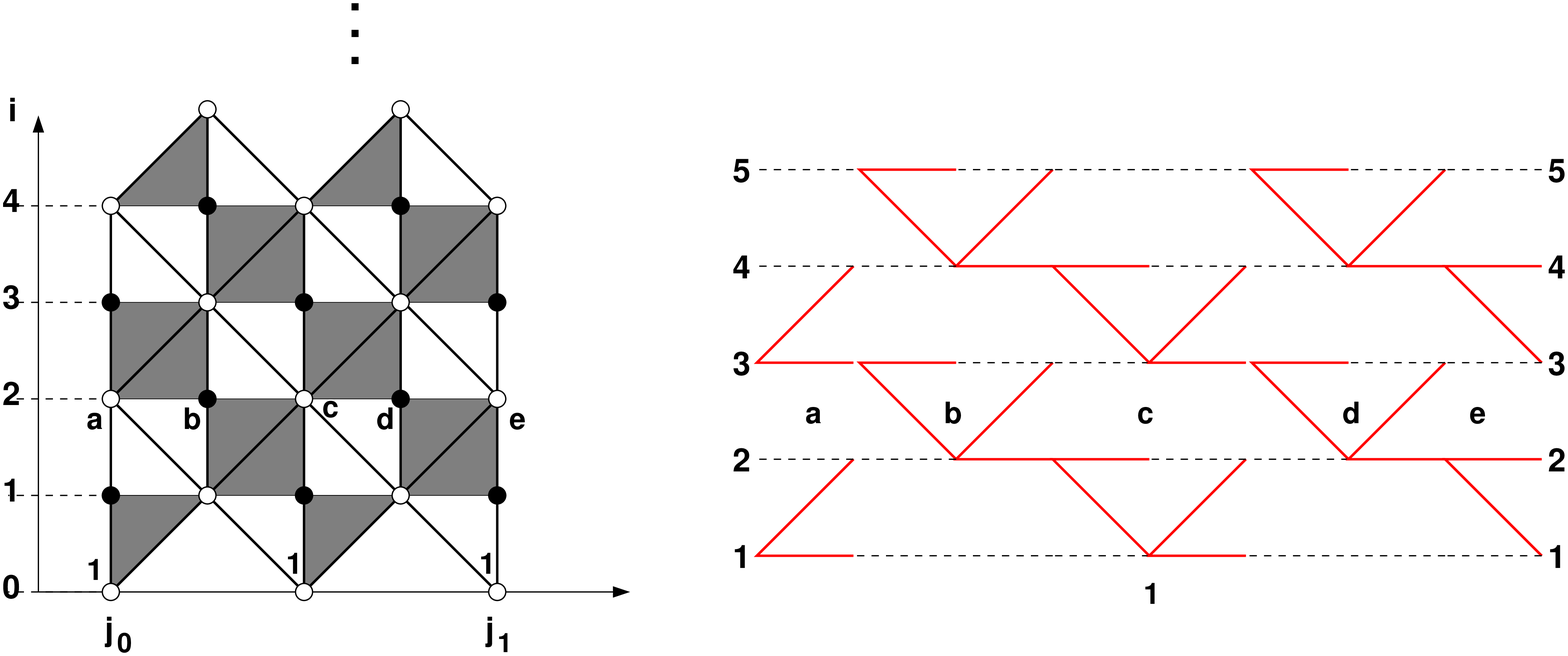}}}    $$
where we have represented the face labels in a sample row. Note also that due to the $A_{\infty/2}$
boundary condition \eqref{plus}, all the vertex/face values on the bottom row/lower face are equal to $1$.
The quantity $T_{j,k}/T_{j_1,k_{1,j_1}}$ is the partition function for paths from entry connector $1$ to exit connector $1$
on the network. We show here a sample such path, together with its local step weights:
$$ \raisebox{1.1cm}{\hbox{\epsfxsize=8.cm \epsfbox{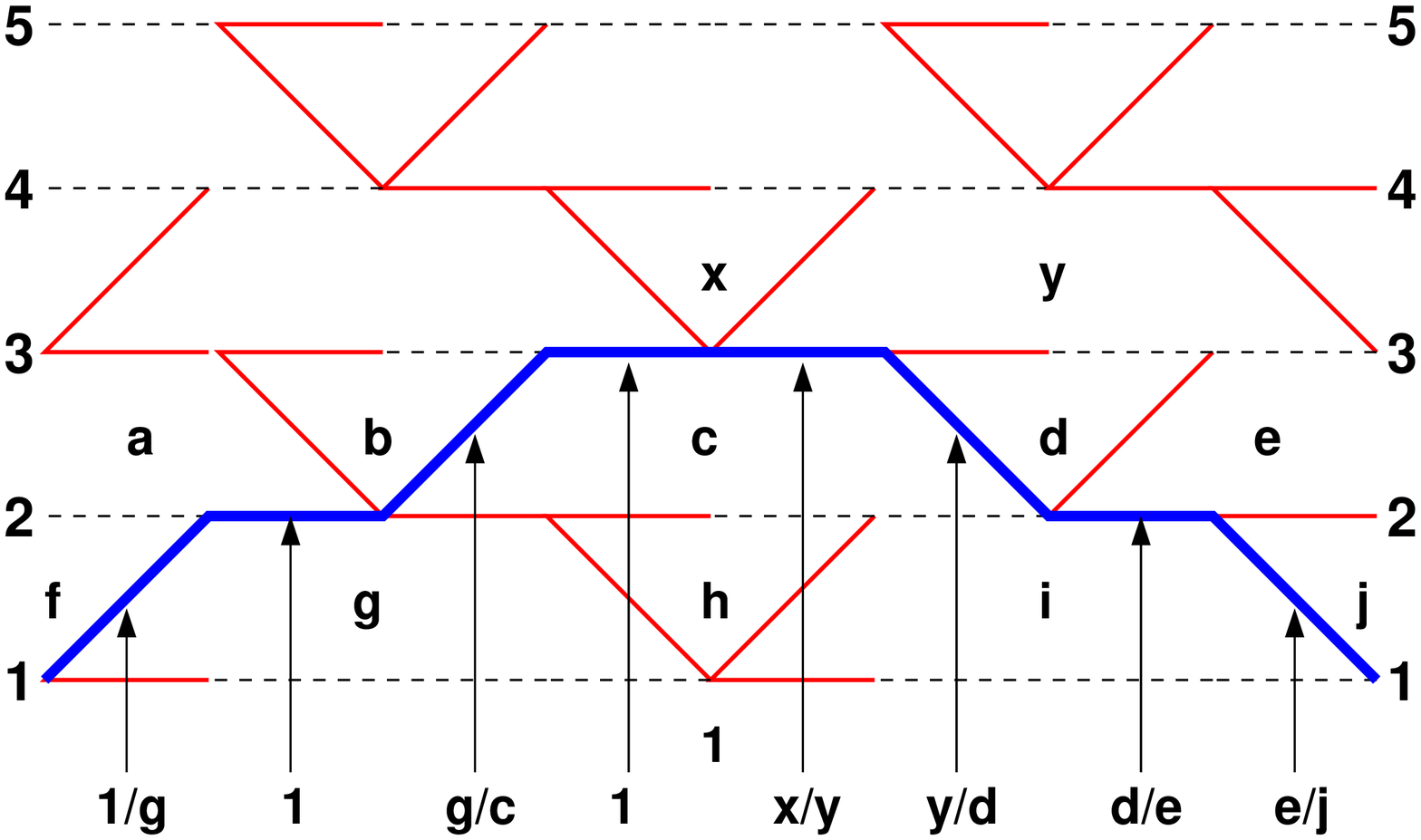}}}    $$
The total  contribution of this path to the partition function is therefore: 
$\frac{1}{g}\frac{g}{c}\frac{y}{x}\frac{x}{d}\frac{d}{e}\frac{e}{j}=\frac{x}{c j}$
\end{example}

\section{Application: computation of the Laurent polynomials $p_{a,b}(\theta_\cdot)$}\label{applione}

In this section, we show that the functions $p_{a,b}(\theta_\cdot)$ defined by the system \eqref{condet} are the solutions $T_{j,k}=T_{1,j,k}$ of 
the $A_{\infty/2}$ $T$-system with some particular initial conditions and some particular mapping of indices $(a,b)\to (j,k)$.

\subsection{Steepest stepped surface}\label{steepest}

In this section we consider stepped surfaces with fixed intersection with the $i=1$ plane, say equal to a path $\pi=(j,k_j)$ with 
$|k_{j+1}-k_j|=1$ and $j+k_j=0$ mod 2 for all $j\in \Z$.  We define the ``steepest stepped surface" $\bk_\pi$ 
to be the unique stepped surface $\bk$ with $k_{1,j}=k_j$ included in the union of all the planes with normal vector $(1,-1,-1)$
or $(1,1,-1)$ and passing through pairs of distinct points of $\pi$. It is easy to see that such a surface is piecewise-linear.
Let $(a_j,b_j=k_{a_j})$, $j\in \Z$ be the vertices where $\pi$ changes direction, with say a minimum when $j$ is even, and a 
maximum when $j$ is odd. The steepest stepped surface $k=k_{i,j}$ is defined by the following equations for $m\in \Z$:
\begin{eqnarray*}
i-j-k_{i,j}&=&1-a_{2m}-b_{2m}\qquad (i\geq 1;a_{2m-1}\leq j\leq a_{2m}) \\
i+j-k_{i,j}&=&1+a_{2m}-b_{2m}  \qquad (i\geq 1;a_{2m}\leq j\leq a_{2m+1})
\end{eqnarray*}

Concretely, the steepest stepped surface is a sort of roof of fixed slope above the infinite Young diagram delimited by the path $\pi$
in the $i=1$ plane (see Fig.\ref{fig:toit} for a 3D view in perspective).

\subsection{Connection with the Laurent polynomials $p_{a,b}(\theta_\cdot)$}

\begin{figure}
\centering
\includegraphics[width=16.cm]{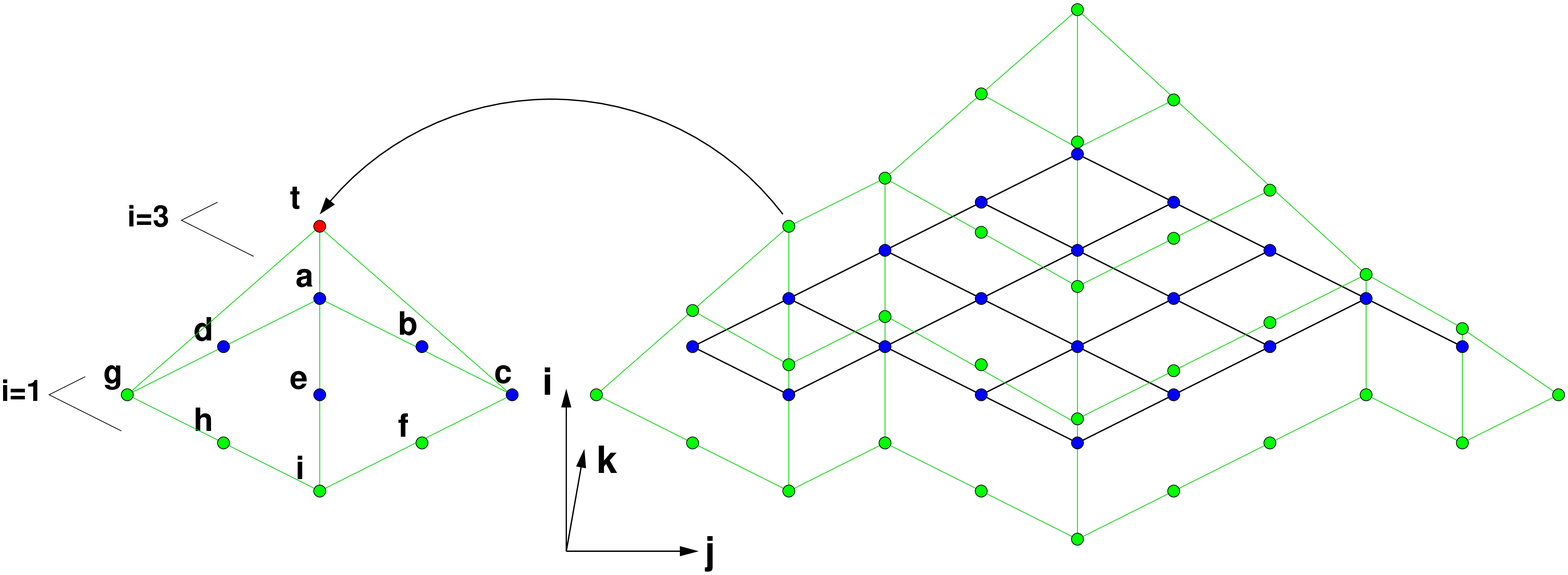}
\caption{\small The FCC lattice representation of the Young diagram $\lambda=(5,4,4,4,2)$ (blue dots in the $i=1$ plane)
and its augmented tableau $\lambda^*$ (extra green vertices in the $i=1$ plane), together with the initial data steepest stepped surface
(green vertices, including $\lambda^*\setminus\lambda$). We have indicated a particular vertex of the surface
in the plane $i=3$, with assigned initial value $t$, and displayed the pyramid of which it is the apex, together with its base in the $i=1$ plane, 
identified as a square of size $3$ of $\lambda^*$. The initial data assignment amounts to imposing that the $3\times 3$ determinant of the values
of $T$ in this square is equal to the value $t$ at the apex.}
\label{fig:toit}
\end{figure}

Let us represent the centers of boxes of the Young diagram $\lambda^*$ as vertices of the $i=1$ plane of the lattice $L_{FCC}$
(see Fig. \ref{fig:toit} for an illustration).
This allows to identify the strip $\lambda^*\setminus \lambda$ as a path $p$ from $(0,0)$ to $(N+\lambda_1,N-\lambda_1)$, while
the NW corner box of the diagram has coordinates $(N,N)$ (see sketch below):
$$ \raisebox{1.1cm}{\hbox{\epsfxsize=11.cm \epsfbox{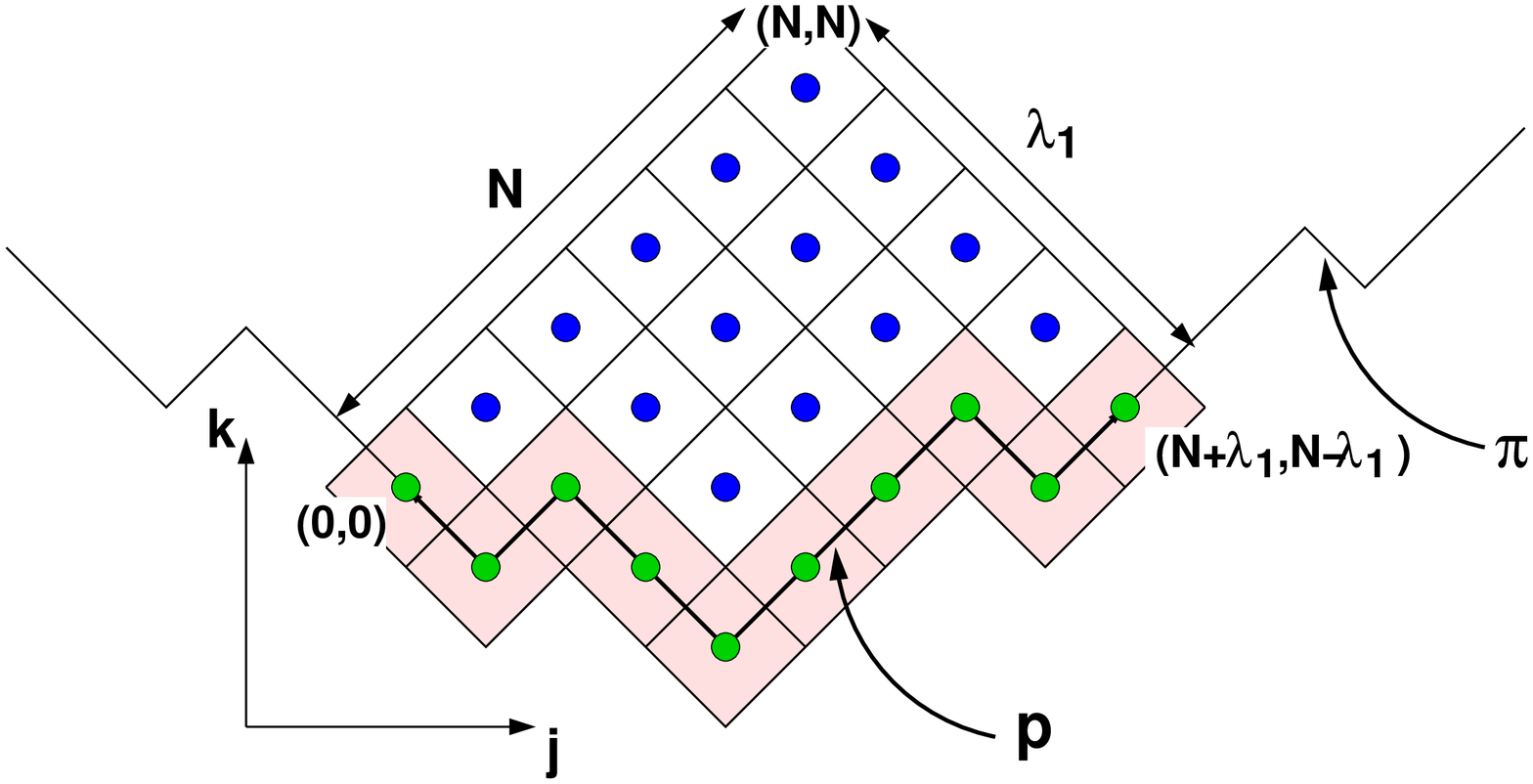}}}    $$
Let us extend arbitrarily the path $p$ into a path $\pi$ on the entire plane, and consider the solutions $T_{j,k}$ of the 
$A_{\infty/2}$ $T$-system with initial data stepped surface equal to the steepest surface $\bk_\pi$ associated to $\pi$,
and with initial values $t_{i,j}$. To avoid confusion, note that there are distinct yet natural frames used so far for expressing
the coordinates of the centers of the boxes of $\lambda^*$. 
On one hand, we have the original frame, in which the coordinate $(a,b)$ refers
to the box in row $a\in[1,N]$ and column $b\in [1,\lambda_a]$. On the other hand we have the $T$-system (or FCC lattice) frame
in the plane $i=1$, in which the coordinate $(j,k)$ of the center of a box is related to that in the original frame by:
\begin{equation}j=\lambda_1+a-b,\qquad k=\lambda_1+2-a-b 
\end{equation}
In the following, we will use letters $a,b$ for the original coordinates, and $j,k$ for the FCC lattice coordinates.

We have the following:

\begin{thm}\label{pT}
The system \eqref{condet} has a unique solution $p_{a,b}(\theta_\cdot)$, $(a,b)\in \lambda$. Moreover, we have
\begin{equation}
p_{a,b}(\theta_\cdot)=T_{a-b+\lambda_1,\lambda_1+2-a-b}\qquad \big((a,b)\in \lambda^*\big)
\end{equation}
where $T_{j,k}=T_{1,j,k}$ is the solution of the $A_{\infty/2}$ $T$-system with steepest initial data stepped surface $\bk_\pi$
and initial values $\bt$ such that
\begin{equation}
\theta_{a,b}=t_{a-b+\lambda_1,\frac{\lambda_1+2-a-b+k_{a-b+\lambda_1}}{2}}\qquad \big((a,b)\in \lambda^*\big)
\end{equation}
where $(j,k_j)$ are the vertices of the path $p$, for $j=0,1,...,\lambda_1+N$.
\end{thm}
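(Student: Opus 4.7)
My plan is to recognize the system \eqref{condet} as exactly the collection of Wronskian identities \eqref{wrons} satisfied by the solution of an $A_{\infty/2}$ $T$-system whose admissible initial data lives on the steepest stepped surface $\bk_\pi$. The parameters $\theta_{a,b}$ will be identified with the initial values $T_{A_{a,b}}$ at the ``apexes'' of pyramids sitting above the squares $S_{a,b}$, and both the uniqueness of $p_{a,b}(\theta_\cdot)$ and the identification $p_{a,b} = T_{a-b+\lambda_1,\,\lambda_1+2-a-b}$ will then follow from the general theory of the $T$-system (Theorem~\ref{netsol}).

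First I would check that the coordinate change $(a,b) \leftrightarrow (j,k) = (\lambda_1+a-b,\; \lambda_1+2-a-b)$ is a bijection between $\lambda^*$ and a region of vertices of $L_{FCC}$ at $i=1$ (the parity $j+k \equiv 0 \pmod 2$ is automatic), and that the border strip $\lambda^*\setminus\lambda$ is sent onto the lattice path $p$ from $(0,0)$ to $(N+\lambda_1,\lambda_1-N)$. Any bi-infinite extension $\pi$ of $p$ then determines $\bk_\pi$; since the octahedron recurrence has finite propagation, the $T$-values relevant to boxes of $\lambda^*$ depend only on initial data assigned to vertices above $p$, so the choice of extension is immaterial.

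The central geometric lemma to establish is that, for each $(a,b) \in \lambda^*$, the FCC point
\begin{equation*}
A_{a,b} \;:=\; \big(n_{a,b},\; \lambda_1+a-b,\; \lambda_1+2-a-b-(n_{a,b}-1)\big)
\end{equation*}
is a vertex of $\bk_\pi$, and $(a,b)\mapsto A_{a,b}$ is a bijection onto the vertices of $\bk_\pi$ whose $(j,k)$-projection lies in the FCC image of $\lambda^*$. Under the inverse coordinate change, $A_{a,b}$ is then the apex of the pyramid whose base is $S_{a,b}$. I expect this bookkeeping to be the main obstacle: writing $(j_c,k_c)$ for the last two coordinates of $A_{a,b}$, the claim $k_{n_{a,b},j_c} = k_c$ is a linear identity once one identifies the affine piece of $\bk_\pi$ (equivalently, the nearest corner $(a_{2m},b_{2m})$ of $\pi$) containing $A_{a,b}$, and one checks it directly from the two displayed formulas in Section~\ref{steepest}. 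Combinatorially, the identity encodes the fact that the steepest roof above the box $(a,b)$ rises exactly until the base of the pyramid meets the outer SE boundary of $\lambda^*$---which is precisely what determines $n_{a,b}$.

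Granted the lemma, the Wronskian formula \eqref{wrons} applied at $A_{a,b}$ reads
\begin{equation*}
T_{A_{a,b}} \;=\; \det\big(T_{1,\, j_c+\beta-\alpha,\, k_c+n_{a,b}+1-\alpha-\beta}\big)_{1\le\alpha,\beta\le n_{a,b}},
\end{equation*}
and a direct computation via the inverse coordinate change shows that the $(\alpha,\beta)$ entry of this matrix equals $p_{a+\alpha-1,\,b+\beta-1}$; the matrix is thus exactly the array of $p$-values on $S_{a,b}$. Declaring $T_{A_{a,b}}=\theta_{a,b}$ as the initial value of the $T$-system at the apex therefore encodes exactly the system \eqref{condet}, with the special case $n_{u,v}=1$ handled by the border-strip identity $p_{u,v}=\theta_{u,v}$. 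This assignment is admissible, so by Theorem~\ref{netsol} the values $T_{1,j,k}$ are uniquely determined and given by an explicit non-negative Laurent polynomial in the $\theta$'s. Restricting to $(j,k)$ in the FCC image of $\lambda^*$ and translating back via the coordinate change produces the unique solution $p_{a,b}(\theta_\cdot) = T_{a-b+\lambda_1,\,\lambda_1+2-a-b}$ of \eqref{condet}, completing the proof.
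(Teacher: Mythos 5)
Your proposal is correct and follows essentially the same route as the paper: both identify \eqref{condet} with the Wronskian identities \eqref{wrons} evaluated at the vertices of the steepest stepped surface, whose pyramids have the squares $S_{a,b}$ as bases in the $i=1$ plane, and then invoke uniqueness of the $T$-system solution given its initial data. Your explicit apex $A_{a,b}$ and the bijection claim are just a more explicit bookkeeping of the step the paper asserts ``by definition of the steepest stepped surface,'' so there is nothing substantively different to flag.
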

\begin{proof}
Consider the solution $T_{i,j,k}$ of the  $A_{\infty/2}$ $T$-system with steepest initial data stepped surface $\bk_\pi$
and initial values $\bt$. Let us restrict our attention to the solutions $T_{j,k}=T_{1,j,k}$ at the points $(j,k)$ with left and right projections
on the interval $[0,\lambda_1+N]$. These are exactly the centers of the boxes of $\lambda^*$ in the above representation
(see Fig.\ref{fig:toit} for the case $\lambda=(5,4,4,4,2)$).

On the other hand, the equation \eqref{wrons} may be interpreted as follows. The indices $(j+b-a,k+i+1-a-b)$
for $1\leq a,b \leq i$ are the coordinates in the plane $x=1$ of the base of the pyramid $\Pi_{i,j,k}$ with apex $(i,j,k)$, defined as
$\Pi_{i,j,k}=\{ (x,y,z)\in L_{FCC},\,  |y-j|+|z-k|\leq |x-i|\}$. As illustrated in Fig.\ref{fig:toit}, each vertex $(i,j,k_{i,j})$ of the steepest surface 
$\bk_\pi$ is the apex of 
such a pyramid $\Pi_{i,j,k_{i,j}}$.  By definition of the steepest stepped surface,
the base of the pyramid $\Pi_{i,j,k_{i,j}}$ in the $x=1$ plane is the square $S_{a,b}$ of $\lambda^*$, with NW corner at position
$(j,2k-k_j)=(a-b+\lambda_1,\lambda_1+2-a-b)$ (and SE corner at position $(j,k_j)$ on the path $p$), hence $a=1-k_{i,j}+\frac{j+k_j}{2}$
and $b=\lambda_1+1+k_{i,j}+\frac{k_j-j}{2}$. We conclude that $T_{i,j,k_{i,j}}=t_{i,j}$ is the determinant of the array $T_{\ell,m}$ for $(\ell,m)$
in the square $S_{a,b}$. In the example of Fig.\ref{fig:toit} (left), this amounts to the identity 
$t={ \tiny{\left\vert \begin{matrix}a & b & c\\ d & e & f\\ g & h & i\end{matrix}\right\vert}}$. 

Let us identify $t_{i,j}\equiv T_{i,j,k_{i,j}}$ with $\theta_{a,b}$, for all $(a,b)\in \lambda^*$,
with $a=1-k_{i,j}+\frac{j+k_j}{2}$
and $b=\lambda_1+1+k_{i,j}+\frac{k_j-j}{2}$. Then the system of equations \eqref{condet} for $p_{a,b}$ becomes the {\it same} as that for
$T_{j,k}$, with $j=a-b+\lambda_1$ and $k=\lambda_1+2-a-b$. 
As all $T_{j,k}$ are uniquely determined by the initial data, then so
are the $p_{a,b}$, and the theorem follows.
\end{proof}


\subsection{Network interpretation}\label{networksec}

We may now specialize the general solution of Sect.\ref{tsysect} to the case of the steepest stepped surface. Let us describe the 
extended Young tableau $\lambda^*$ via the sequence of integers $n_1,m_1,n_2,m_2,...,n_k,m_k$
corresponding to the length of straight portions of the path $p$ delimiting the diagram, namely:
$$ \raisebox{1.1cm}{\hbox{\epsfxsize=7.cm \epsfbox{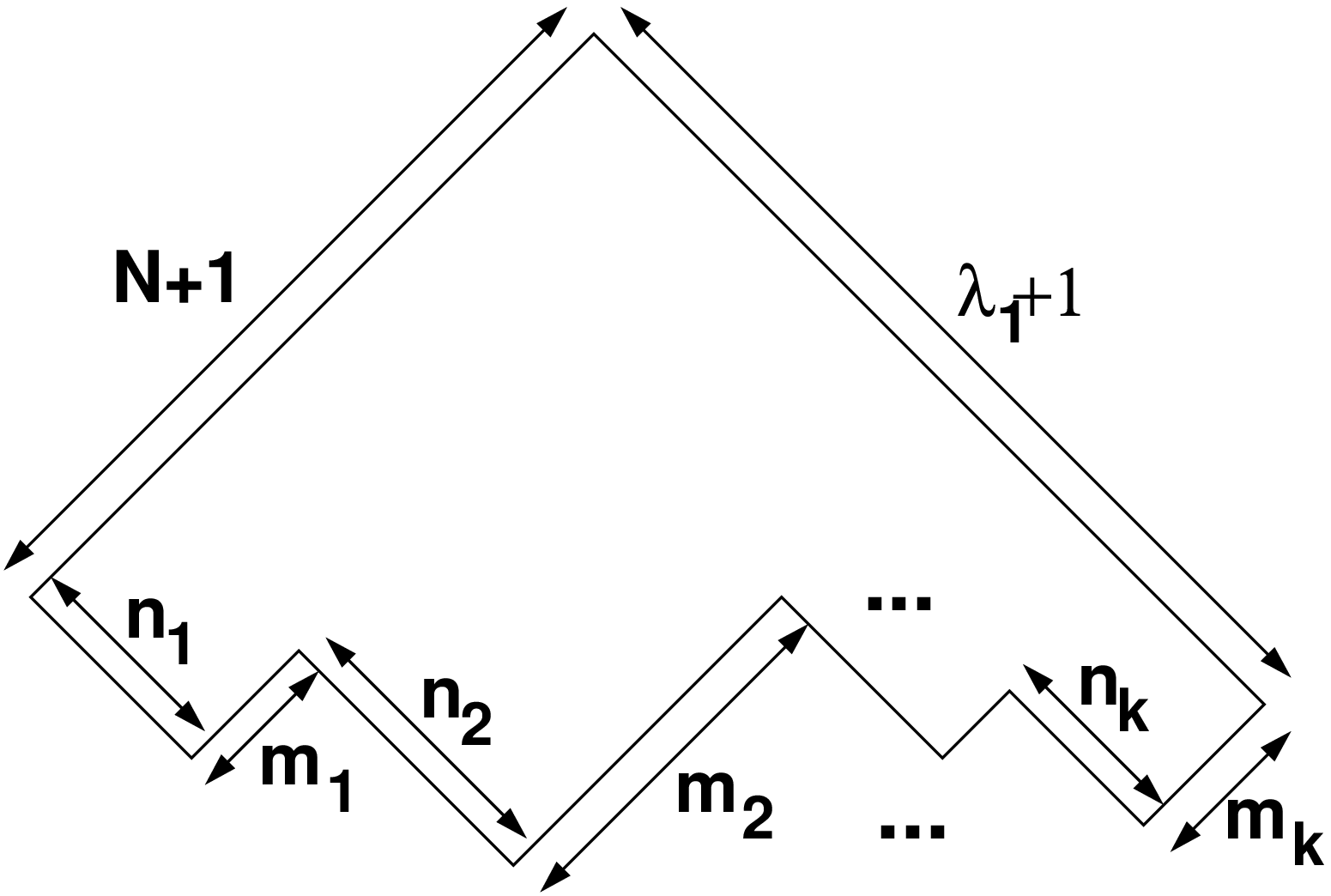}}}     $$
or in the notations of Sect. \ref{steepest}: $n_{i}=a_{2i}-a_{2i-1}$, $m_i=a_{2i+1}-a_{2i}$, $i=1,2,...,k$,
where $a_1,a_2,...,a_{2k}$ are the $j$ coordinates in the FCC lattice of the changes of slope of $p$.

A drastic simplification occurs in the case of the steepest stepped surface: along each steepest plane,
only one type of lozenge $U$ or $V$ occurs, namely planes orthogonal to $(1,-1,-1)$ have a lozenge decomposition
using only $V$ type lozenges, while those orthogonal to $(1,1,-1)$ have a lozenge decomposition
using only $U$ type lozenges. Moreover the slice of surface corresponding to $T_{i,j,k}$ with lower/upper projections $j_0,j_1$
always starts with a $(1,-1,-1)^\perp$ plane and ends with a $(1,1,-1)^\perp$ one. The corresponding lozenge decomposition
reads typically like:
$$ \raisebox{1.1cm}{\hbox{\epsfxsize=11.cm \epsfbox{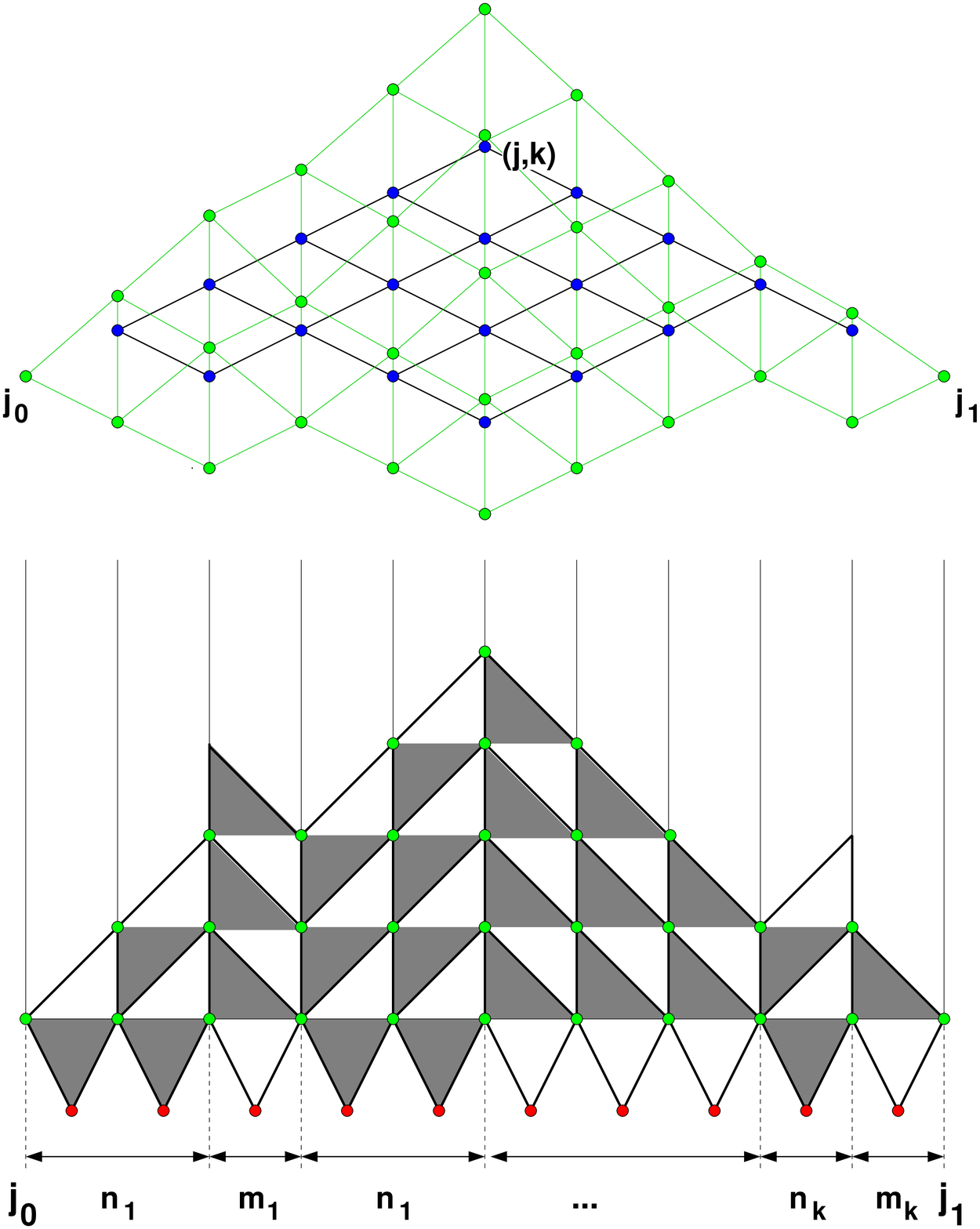}}}      $$
where we have only included the minimal number of $V$ or $U$ type lozenges in each slice (any extra $U,V$ would have no effect
on the corresponding matrix element $M(j_0,j_1)_{1,1}$). Recall that the vertices of the surface carry initial data assignments $t_{i,j}$,
in bijection with the $\theta_{a,b}$ parameters, while the bottom layer at $i=0$ carries values all equal to $1$.

We may now construct the network associated to the lozenge decomposition above. For our running example, it reads:
$$ \raisebox{0.cm}{\hbox{\epsfxsize=17.cm \epsfbox{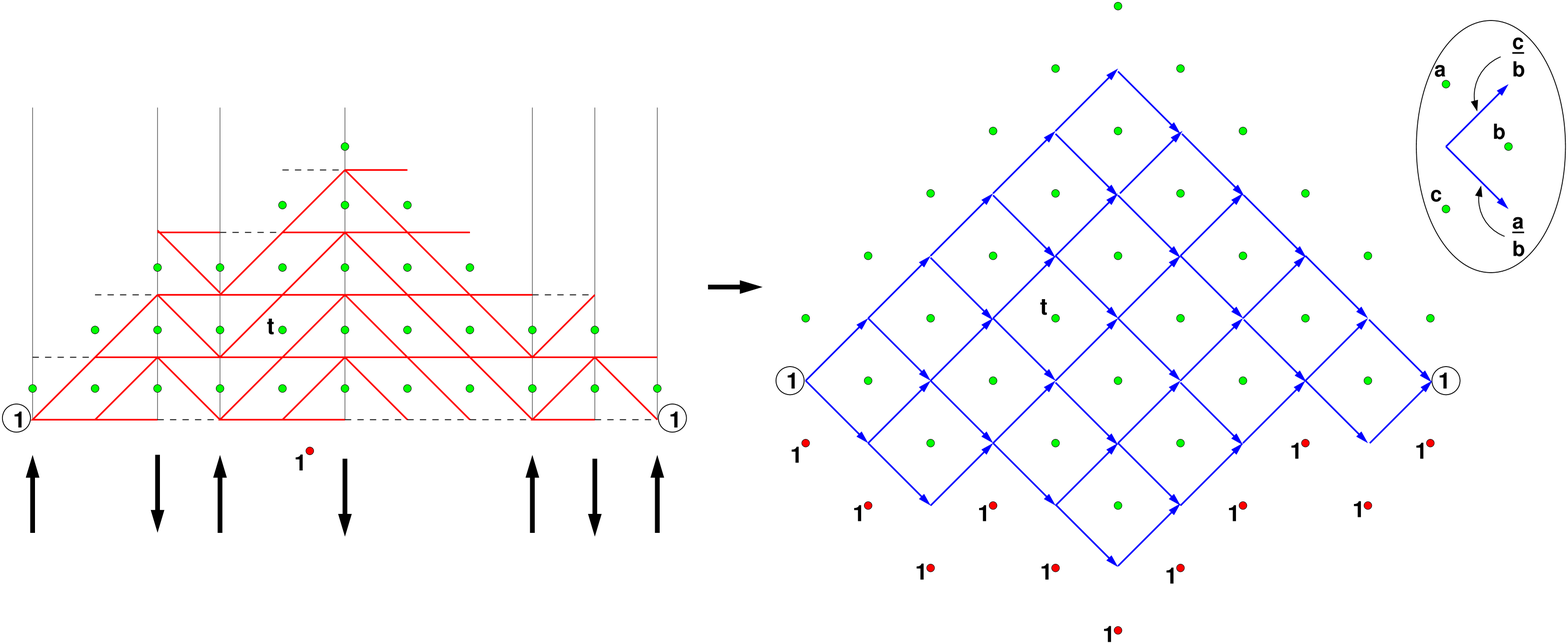}}}      $$
where we have indicated how to deform the original network graph to bring it to the square lattice with oriented edges. 
This latter graph is denoted by $L(j_0,j_1)$ in FCC lattice language. Assuming that the top inner face of $L(j_0,j_1)$
corresponds to the box $(a,b)$ of $\lambda$, we denote alternatively this network by ${\mathcal N}_{a,b}\equiv L(j_0,j_1)$. 
We have indicated the face variables of this network by green dots (the top vertex carries the variable $\theta_{a,b}$), 
while all variables on the (bottom-most) red dots are equal to $1$.
The medallion summarizes
the weighting rule for the edges of  $L(j_0,j_1)={\mathcal N}_{a,b}$ in terms of the face variables $t_{i,j}=\theta_{a,b}$ (at the indicated vertices). 
This is just a rephrasing of the weights of $U$ and $V$ type chips after the above deformation, namely:
\begin{equation}\label{rulnet} U=\raisebox{-1.cm}{\hbox{\epsfxsize=4.cm \epsfbox{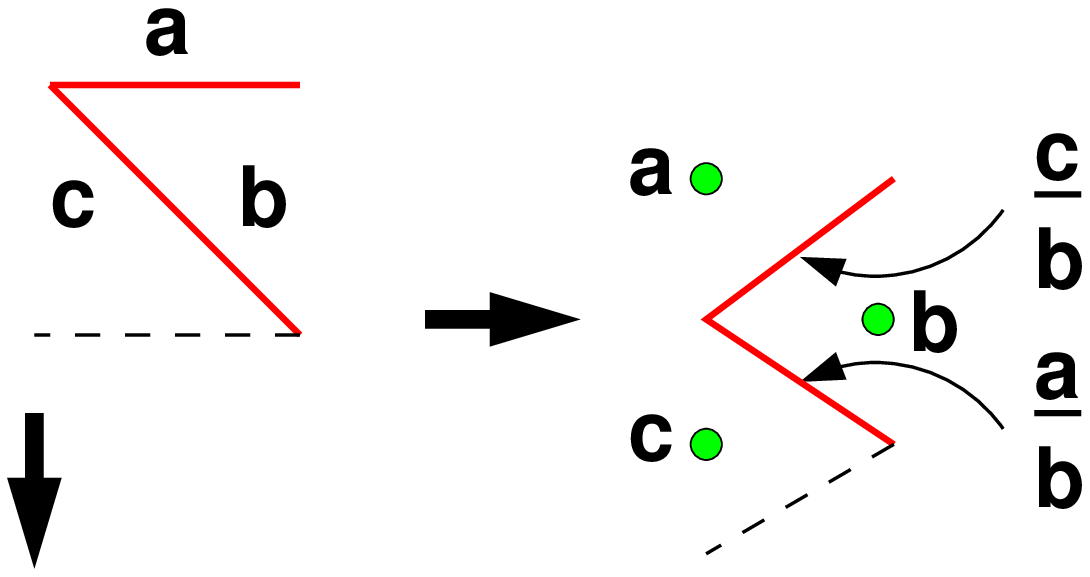}}},
\qquad V= \raisebox{-1.cm}{\hbox{\epsfxsize=4.cm \epsfbox{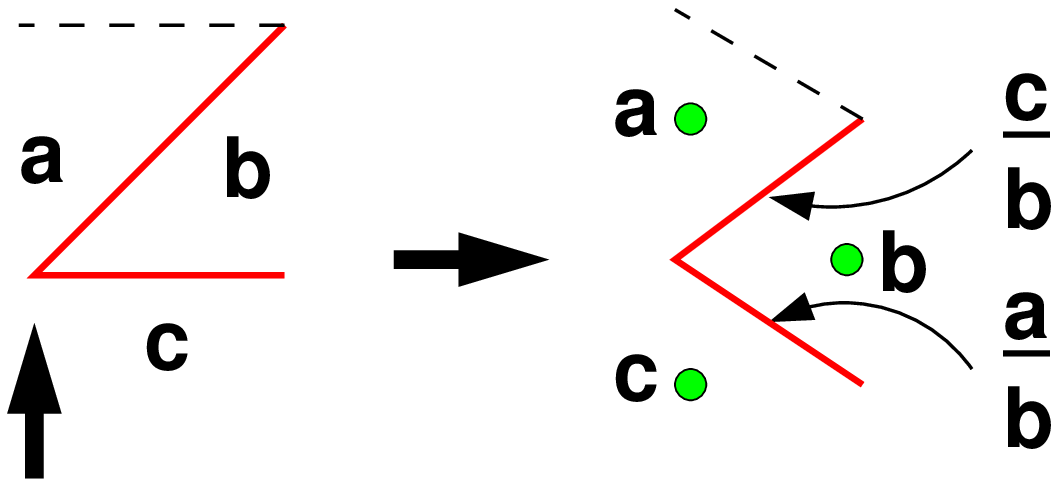}}}
\end{equation} 
The graph $L(j_0,j_1)={\mathcal N}_{a,b}$ is nothing but the actual initial Young diagram 
$\lambda_{a,b}$ (SE corner at $(a,b)$) represented tilted by $45^\circ$, 
and with all box edges oriented from left to right. The face variables,
expressed indifferently as $t_{i,j}$'s or $\theta_{a,b}$'s, are actually at the centers of boxes of $\lambda_{a,b}^*$ 
but displaced by a global translation of $(2,0)$ in the $(j,k)$ plane 
(or by one column to the left and one row up in the original frame). For convenience we denote by $\lambda_{a,b}^\#$
this latter displaced array.
Note that extra variables equal to $1$ occupy the centers of boxes of $\lambda_{a,b}^*\setminus \lambda_{a,b}$.
In the above depiction, we have $\lambda_{a,b}=(5,4,4,4,2)$
and $\lambda_{a,b}^*=(6,6,5,5,5,3)$.

By use of Theorems \ref{netsol} and \ref{pT}, the value of $p_{a,b}=T_{j,k}$ in the top box is given by the partition function for paths
on $L(j_0,j_1)={\mathcal N}_{a,b}$ from the leftmost vertex to the rightmost one, multiplied by the bottom right initial value 
$t_{1,j_1}=\theta_{a,\lambda_a^*}=\theta_{a,\lambda_a+1}$. 
We summarize this result in the following:

\begin{thm}\label{thpath}
The solution $p_{a,b}$ to the system \eqref{condet} is the partition function of paths on the weighted graph ${\mathcal N}_{a,b}$
associated to $\lambda_{a,b}$, from the leftmost to the rightmost vertex, multiplied by the variable $\theta_{a,\lambda_a+1}$ of the top
right box of $\lambda_{a,b}^*$.
\end{thm}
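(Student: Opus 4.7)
The plan is to derive Theorem \ref{thpath} as a direct specialization of the general network formula of Theorem \ref{netsol} to the steepest initial data stepped surface $\bk_\pi$ used in Theorem \ref{pT}. First, I would invoke Theorem \ref{pT} to rewrite $p_{a,b}(\theta_\cdot)$ as the $T$-system solution $T_{j,k}=T_{1,j,k}$ at coordinates $j=a-b+\lambda_1$, $k=\lambda_1+2-a-b$, with the initial values $\bt$ identified with the $\theta_\cdot$ parameters. By Theorem \ref{netsol}, this is $t_{1,j_1}$ times the path partition function on the network $N(j_0,j_1)$, where $j_0,j_1$ are the left/right projections of $(j,k)$ onto $\bk_\pi$. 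The task is then to verify that $N(j_0,j_1)$ is exactly the graph ${\mathcal N}_{a,b}$ and that the boundary factor matches $\theta_{a,\lambda_a+1}$.

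Next, I would perform the lozenge decomposition of the slice of $\bk_\pi$ between the projections $j_0$ and $j_1$. The key simplification is geometric: since $\bk_\pi$ is built from pieces of just two families of parallel planes, orthogonal to $(1,-1,-1)$ and to $(1,1,-1)$, each portion decomposes using a single type of lozenge ($V$ on one family, $U$ on the other). I would check directly from the local rules \eqref{eqrules} that a $(1,-1,-1)^\perp$ piece admits only $V$-lozenges and a $(1,1,-1)^\perp$ piece admits only $U$-lozenges, and that the slice starts with a $V$-column and ends with a $U$-column because $(j,k)$ sits above a NW corner box of $\lambda^*$. After concatenation, this produces the rectangular, staircase-shaped strip shown in the sample decomposition in the excerpt; the flatness relation \eqref{flat} means that any additional trivial $U$ or $V$ chips may be inserted or removed without changing the $(1,1)$-matrix element, so we may truncate to exactly the shape matching $\lambda_{a,b}$.

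Then I would translate the resulting chip concatenation into its network realization via \eqref{rep2UVi} and the edge-weight rules \eqref{rulnet}, and perform the planar deformation indicated in the excerpt that sends the staircase of $U$ and $V$ chips to the square grid $L(j_0,j_1)$. Under the correspondence between FCC coordinates $(i,j,k_{i,j})$ of surface vertices and original coordinates $(a,b)\in \lambda^*$ given in the proof of Theorem \ref{pT}, the face labels $t_{i,j}$ become exactly the $\theta$'s indexing the boxes of $\lambda_{a,b}^\#$, while the bottom-row vertices on the $i=0$ plane carry the boundary value $1$ from \eqref{plus}. This identifies $L(j_0,j_1)$ with ${\mathcal N}_{a,b}$, and the SE-corner projection $j_1$ sits at the box $(a,\lambda_a+1)$ of the border strip, giving the prefactor $t_{1,j_1}=\theta_{a,\lambda_a+1}$.

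The main obstacle is the bookkeeping in the second step: one has to be convinced that the steepest geometry forces the promised lozenge pattern on every slice, and that the end-effects (the extra $U$- or $V$-lozenges of weight $1$ coming from the border strip $\lambda^*\setminus\lambda$, together with the bottom row of $i=0$ vertices) conspire so that the truncated network is precisely the tilted diagram $\lambda_{a,b}$ rather than some slightly larger graph. Everything else is a transcription of previously established results; in particular, positivity and the Laurent property of the resulting partition function follow for free from Theorem \ref{netsol}.
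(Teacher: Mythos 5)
Your proposal is correct and follows essentially the same route as the paper: Theorem \ref{pT} identifies $p_{a,b}$ with $T_{1,j,k}$, Theorem \ref{netsol} gives the path partition function on $N(j_0,j_1)$, and the remaining work is the observation that the steepest surface forces a single lozenge type per plane so that the deformed network $L(j_0,j_1)$ is the tilted diagram $\lambda_{a,b}$ with boundary factor $t_{1,j_1}=\theta_{a,\lambda_a+1}$. The paper carries out exactly this specialization (asserting, as you do, that extra $U,V$ chips beyond the minimal decomposition do not affect the $(1,1)$ matrix element), so no substantive difference remains.
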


Note that Theorem \ref{main} follows from this result, as the sum over paths produces a manifestly positive Laurent polynomial of the initial parameters $\theta_{i,j}$.

\begin{example}
Let us revisit Example \ref{firstex}. The graph $L(j_0,j_1)={\mathcal N}_{1,1}$ for the calculation of $p_{1,1}$ is associated to 
$\lambda_{1,1}=\lambda=(2,1)$, and reads:
$$\raisebox{0.cm}{\hbox{\epsfxsize=12.cm \epsfbox{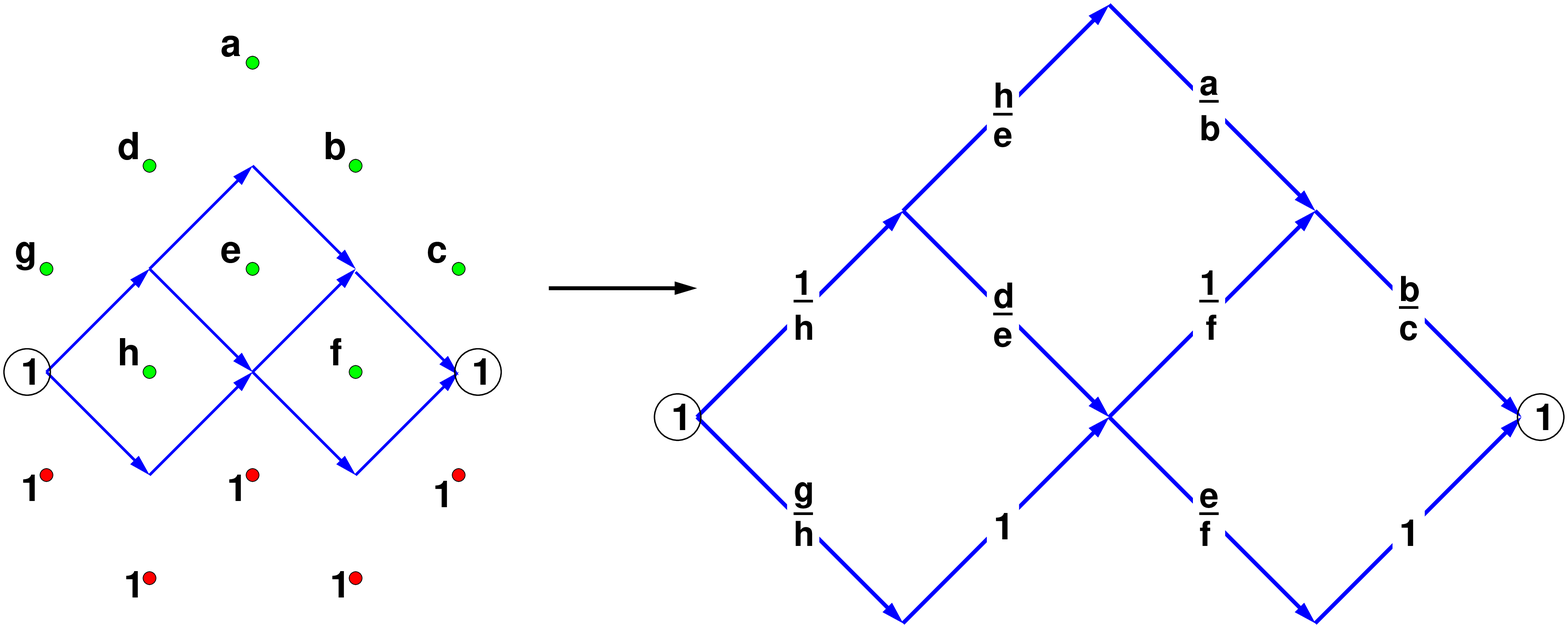}}} $$
where we have indicated the edge weights.
The Laurent polynomial $p_{1,1}$ is $c$ times the partition function for the 5 paths $1\to 1$:
$$\begin{matrix}\begin{matrix}{\rm path:} \\ {} \end{matrix}& \hbox{\epsfxsize=2.cm \epsfbox{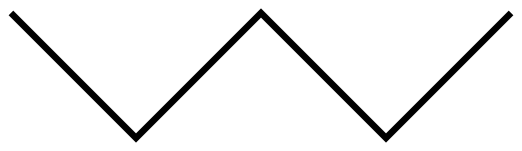}} &  \hbox{\epsfxsize=2.cm \epsfbox{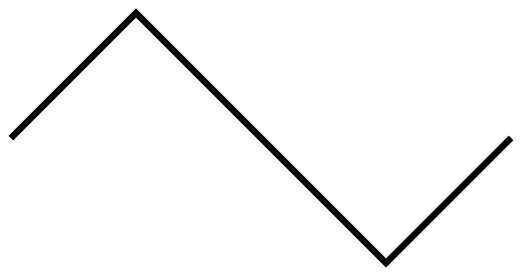}}& 
\hbox{\epsfxsize=2.cm \epsfbox{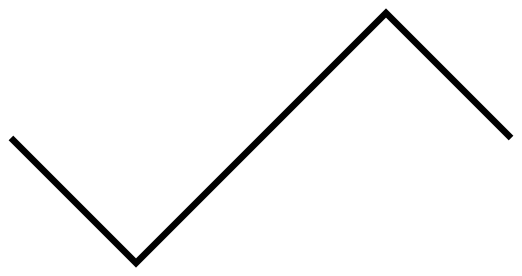}} &  \hbox{\epsfxsize=2.cm \epsfbox{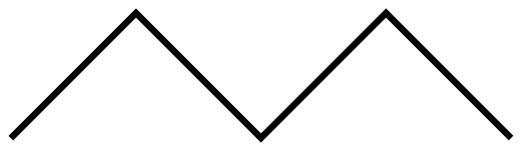}}& \hbox{\epsfxsize=2.cm \epsfbox{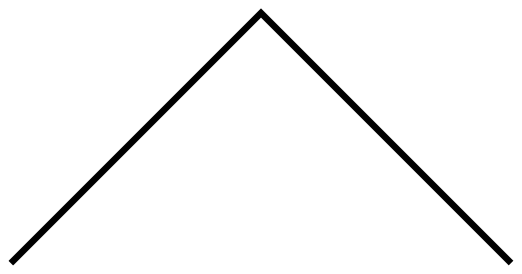}} \\
{\rm weight:} & \frac{ge}{f h c}& \frac{d}{f h} &\frac{gb}{fhc} &\frac{bd}{hefc} & \frac{a}{ce}  \end{matrix}$$
wich yields
$$ p_{1,1}=\frac{a f h+(b+c e)(d+g e)}{hef} $$
in agreement with the expression of Example \ref{firstex}.
\end{example}

\subsection{Dimer interpretation}\label{dimersec}

The network formulation for the solution of the $T$-system described in Sect.\ref{tsysect} can be rephrased in terms of a statistical model of dimers on a planar bipartite graph ${\mathcal G}$ with face variables, made only of square, hexagonal and octagonal inner faces, and with open outer faces 
adjacent to 1 or 2 edges (see Ref.\cite{DF13}). This graph is constructed as the dual of the lozenge decomposition, essentially by substituting:
$$\raisebox{-1.3cm}{\hbox{\epsfxsize=5.cm \epsfbox{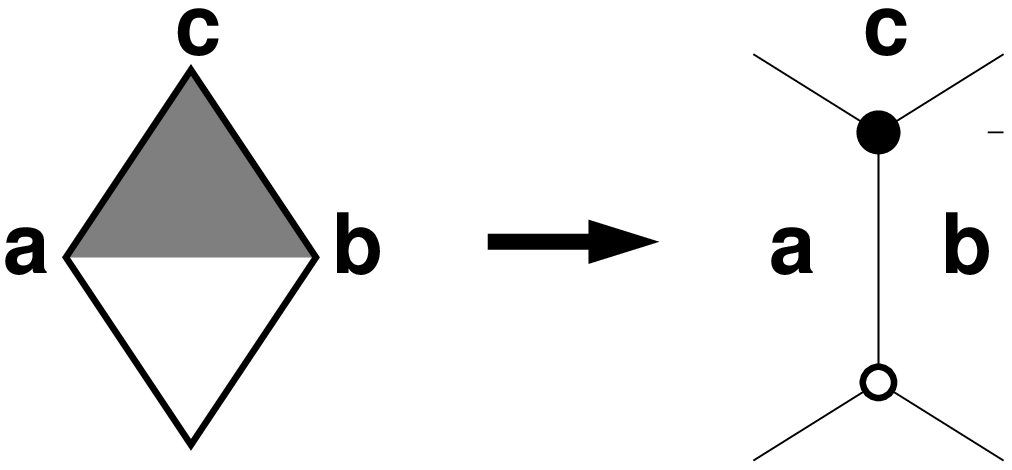}}}\quad {\rm and}\quad  \raisebox{-1.2cm}{\hbox{\epsfxsize=5.cm \epsfbox{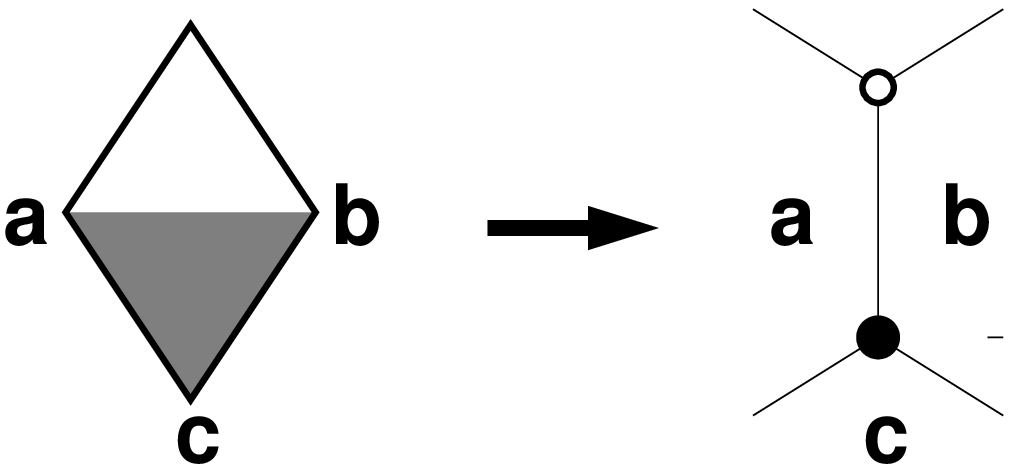}}}   $$
Note that the initial data assignments become face variables in the dimer graph.
The configurations of the dimer model on 
a bipartite graph are obtained by covering single edges of the graph by ``dimers" in such a way that each vertex is covered exactly once.
The weight of a given configuration is the product of local outer/inner face weights expressed in terms of the attached face variable.
The weight of an inner face is $a^{\frac{v}{2}-1-D}$ where $a$ is the face variable, $v$ the degree of the face ($v\in\{4,6,8\}$),
and $D$ the total number of dimers occupying edges bordering the face. The weight of an outer face is $b^{1-D}$, where $b$ is the face variable
and $D$ the total number of dimers occupying edges of the graph adjacent to the face.
Then we have:

\begin{thm}\label{thdim}{\cite{DF13}}
The solution $T_{1,j,k}\equiv T_{j,k}$ of the $A_{\infty/2}$ $T$-system is the partition function of the dimer model on the dimer graph dual to the lozenge decomposition of the corresponding network.
\end{thm}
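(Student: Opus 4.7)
The plan is to build directly on Theorem \ref{netsol}, which already expresses $T_{j,k}$, up to the explicit prefactor $t_{1,j_1}$, as a path partition function on the network $N(j_0,j_1)$. The strategy is then to establish a weight-preserving bijection between path configurations on $N(j_0,j_1)$ and dimer configurations on the dual bipartite graph ${\mathcal G}$. The bijection is constructed chip by chip: the theorem statement already displays the local replacement sending each $U$ or $V$ chip to its dual hexagonal/square/octagonal dimer piece, so the task is to verify this local correspondence and then glue.

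First I would analyze each chip in isolation. A $U_i(a,b,u)$ or $V_i(v,a,b)$ chip has only a handful of non-trivial edges, so the set of local path segments (indexed by which entry and exit connectors are used and which internal edge is traversed) is small. For each such segment I would identify the unique dimer configuration on the corresponding dual piece whose boundary dimer state is complementary to the path's entry/exit pattern. Then I would check that the product of network edge weights for the segment equals the product of face weights $a^{v/2-1-D}$ of the dual piece, using the fact that each weighted edge of $U$ or $V$ is a ratio of the chip face variables $a,b,u$ (resp.\ $a,b,v$).

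Second, I would glue the local bijections. Adjacent chips in $N(j_0,j_1)$ share connector edges, which correspond to shared boundary edges between neighboring dimer pieces in ${\mathcal G}$. A global path on $N(j_0,j_1)$ from the leftmost to the rightmost connector $1$ therefore determines a global dimer cover of ${\mathcal G}$, and conversely; the total weights multiply consistently across chips. The outer face weight rule $b^{1-D}$ is then used to encode the boundary conditions at the leftmost and rightmost connectors, and in particular it absorbs the prefactor $t_{1,j_1}$ from Theorem \ref{netsol} in a manner tracked by whether the source and sink at connector $1$ are covered by external dimers.

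The main obstacle is the local verification: writing down the chip-dual pair for both $U$ and $V$ and checking, case by case, that the formulas $a^{v/2-1-D}$ for inner faces and $b^{1-D}$ for outer faces correctly redistribute the Laurent monomial edge weights of the network into per-face contributions, especially at vertices where several chips meet so that one face variable is shared among several chips. An alternative strategy that avoids this case analysis would be to show directly that the proposed dimer partition function satisfies the octahedron recurrence \eqref{octa} via a Kuo-type graphical condensation identity on ${\mathcal G}$, and then to check that the dimer graph degenerates correctly at each initial data vertex of the stepped surface $\bk$ to yield $t_{i,j}$; by uniqueness of the solution to the $A_{\infty/2}$ $T$-system this would force equality with $T_{j,k}$. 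The hard part in that alternative is identifying the right Kuo four-edge identity on a graph with mixed square, hexagonal and octagonal faces and verifying it reduces to \eqref{octa} with the given face-weight conventions.
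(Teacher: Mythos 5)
This theorem is not actually proved in the paper: it is imported verbatim from \cite{DF13}, and the citation is the ``proof''. The closest the paper comes to an argument is the remark at the end of Section~4.4, where Theorems \ref{thpath} and \ref{thdim} are connected by a bijection between dimer configurations on ${\mathcal G}_{a,b}$ and single lattice paths, obtained by passing to rhombus tilings of the dual triangular domain ${\mathcal T}_{a,b}$ and extracting the unique ``vertical'' De Bruijn line. That argument, however, is specific to the steepest stepped surface, where the dimer graph is a purely hexagonal domain. Your first strategy --- a chip-by-chip weight-preserving bijection between paths on $N(j_0,j_1)$ and dimer configurations on the dual of an \emph{arbitrary} lozenge decomposition --- is therefore a genuinely different and more general route than anything in this paper; your Kuo/graphical-condensation alternative is in fact the one closest in spirit to how \cite{DF13} (and \cite{SPY} before it) actually establish such statements, by verifying that the dimer partition functions satisfy \eqref{octa} and the initial conditions and invoking uniqueness. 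What the De Bruijn-line argument buys is brevity in the special case; what your local gluing buys is the general statement, which is what Theorem \ref{thdim} actually asserts.

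Two places where your first strategy is not as local as you describe it, and where the verification must be done globally rather than chip by chip. First, each edge of ${\mathcal G}$ borders two faces and each face variable of the network is shared among several adjacent chips, so the decomposition of ${\mathcal G}$ into ``dual pieces'' is not a partition of its edge set; the identity between the product of per-edge path weights and the product of per-face weights $a^{v/2-1-D}$ only holds after telescoping contributions across chip boundaries, not piece by piece. Second, a single path visits only some faces of ${\mathcal G}$; the faces it does not touch still carry nontrivial weights $a^{v/2-1-D}$ under the background (reference) matching, and you must check both that this background matching is unique given the path and that its total weight collapses to $1$ (or to the prefactor $t_{1,j_1}$, together with the outer-face rule $b^{1-D}$ as you suggest). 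Neither issue is fatal --- this is a known-to-work construction --- but as written your plan asserts local uniqueness and local weight equality that do not hold face by face, only in the aggregate.
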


For the particular case of the steepest stepped surface, the dimer graph ${\mathcal G}_{a,b}$ for the computation of $p_{a,b}$
is particularly simple. Its inner faces occupy a domain
of the hexagonal (honeycomb) lattice with the shape of the young diagram $\lambda_{a,b}$, while its outer faces correspond
respectively to $\lambda_{a,b}^*\setminus \lambda_{a,b}$ with face variables all equal to $1$, and to $\lambda_{a,b}^\#\setminus \lambda_{a,b}$.
The other face variables are the variables $\theta_{\alpha,\beta}$ on $\lambda_{a,b}^\#$.
For the case $\lambda_{a,b}=(5,4,4,4,2)$ of previous section, the dimer graph ${\mathcal G}_{a,b}$ reads:
$$ \raisebox{-1.3cm}{\hbox{\epsfxsize=9.cm \epsfbox{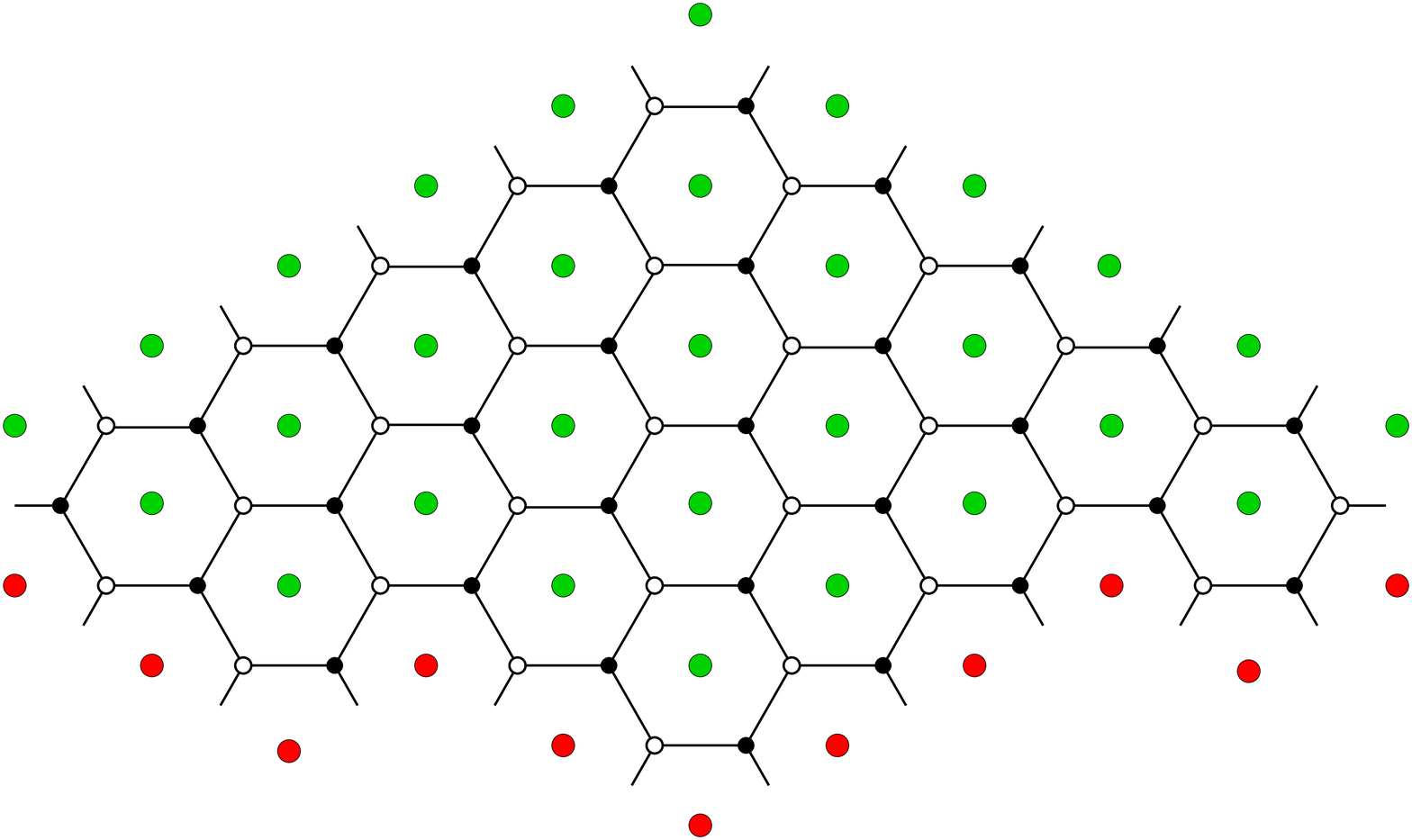}}}$$
where we have represented in red the centers of boxes of $\lambda_{a,b}^*\setminus\lambda_{a,b}$ (all with assigned face values $1$),
and in green the centers of boxes of $\lambda_{a,b}^\#$ (with the $\theta_{\alpha,\beta}$'s or $t_{i,j}$'s as assigned faces values).

Applying Theorem \ref{thdim}, we finally get:

\begin{thm}
The Laurent polynomial $p_{a,b}(\theta_\cdot)$ is the partition function for the dimer model on the graph ${\mathcal G}_{a,b}$ with face variables 
$\theta_{\alpha,\beta}$ on the faces corresponding to the boxes of $\lambda_{a,b}^\#$ and face variables $1$ on those corresponding to the boxes
of $\lambda_{a,b}^*\setminus\lambda_{a,b}$.
\end{thm}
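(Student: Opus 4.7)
The plan is to reduce the statement to a direct application of Theorem \ref{thdim}, once the geometry of the dimer graph dual to our specific lozenge decomposition is correctly identified. By Theorem \ref{pT}, the Laurent polynomial $p_{a,b}(\theta_\cdot)$ equals the $T$-system value $T_{j,k}$ at $(j,k)=(a-b+\lambda_1,\lambda_1+2-a-b)$, computed from the steepest stepped surface initial data $\bk_\pi$ with initial values identified with the $\theta_{\alpha,\beta}$'s. Theorem \ref{thdim} then expresses $T_{j,k}$ as the partition function of the dimer model on the graph dual to the lozenge decomposition of the slice of $\bk_\pi$ between the left and right projections $j_0,j_1$. So the content of the theorem is purely combinatorial: one must check that this dual graph coincides with ${\mathcal G}_{a,b}$ as described.

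First, I would revisit the lozenge decomposition of the steepest stepped surface in Section \ref{networksec}. The key simplification there is that only $U$-lozenges appear on faces orthogonal to $(1,1,-1)$ and only $V$-lozenges on faces orthogonal to $(1,-1,-1)$, and the relevant slice is precisely the ``roof'' above the Young sub-diagram $\lambda_{a,b}$. Then I would apply the local substitution rule recalled in the statement of Theorem \ref{thdim} — each $U$- or $V$-lozenge is replaced by the corresponding dual building block — to see that the dimer graph is obtained by gluing these hexagonal pieces along the tilted grid of boxes of $\lambda_{a,b}^*$. A routine face-by-face check shows that this gluing produces exactly a honeycomb domain whose inner hexagonal faces are in bijection with the boxes of $\lambda_{a,b}$, while the degree-$\le 2$ outer faces split into two families: those along the South-East border path corresponding to $\lambda_{a,b}^*\setminus\lambda_{a,b}$, and those along the North-West border corresponding to $\lambda_{a,b}^\#\setminus \lambda_{a,b}$.

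Next, I would track the face-variable assignments under the duality. The face variables of the dimer graph are, by construction, the initial values attached to the vertices of the stepped surface sitting at the centers of the corresponding lozenge pairs. Under the identification of Theorem \ref{pT}, these initial values are the $\theta_{\alpha,\beta}$'s attached to the boxes of the shifted diagram $\lambda_{a,b}^\#$; in particular the vertices of $\bk_\pi$ lying directly above $\lambda_{a,b}^*\setminus\lambda_{a,b}$ carry trivial $A_{\infty/2}$ boundary value $1$ (after the trivial outer rows attached to the $i=0$ plane collapse into outer faces of the dimer graph). This matches precisely the face-labeling rule claimed for ${\mathcal G}_{a,b}$.

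The main obstacle, as usual with dimer reformulations, is not conceptual but bookkeeping: one has to carefully verify that the outer face boundary produced by the $A_{\infty/2}$ boundary condition $T_{0,j,k}=1$, together with the truncation of the lozenge decomposition at the slice endpoints $j_0, j_1$, glues seamlessly with the honeycomb region coming from the interior $U$/$V$ lozenges, so that the entire picture is exactly the diagram displayed at the end of Section \ref{dimersec}. Once that identification is in place, the theorem is immediate from Theorem \ref{thdim}.
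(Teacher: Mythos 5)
Your proposal is correct and follows essentially the same route as the paper: the paper likewise obtains this result by specializing Theorem \ref{thdim} to the steepest stepped surface, identifying the dual of the all-$U$/all-$V$ lozenge decomposition as the honeycomb domain ${\mathcal G}_{a,b}$ shaped like $\lambda_{a,b}$, and reading off the face variables ($\theta_{\alpha,\beta}$ on $\lambda_{a,b}^\#$, value $1$ on $\lambda_{a,b}^*\setminus\lambda_{a,b}$). The bookkeeping you flag about the outer faces and the $A_{\infty/2}$ boundary is exactly the content the paper handles by its explicit description of ${\mathcal G}_{a,b}$ preceding the theorem.
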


\begin{example}
Let us revisit the example \ref{firstex}. The graph ${\mathcal G}_{1,1}$ for computing $p_{1,1}$ reads:
$$ \raisebox{-1.3cm}{\hbox{\epsfxsize=6.cm \epsfbox{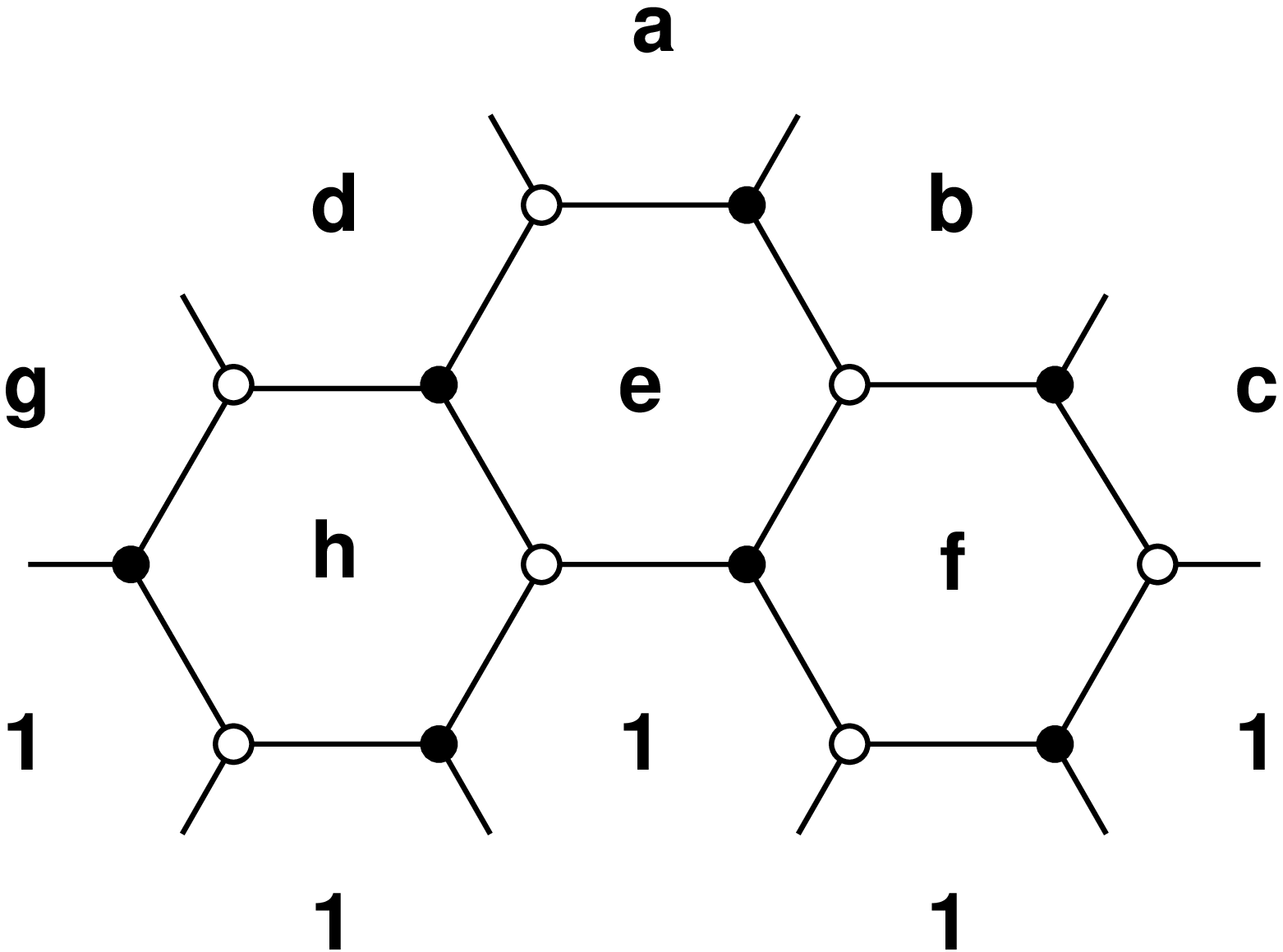}}} $$
and the partition function for dimers on ${\mathcal G}_{1,1}$ is the sum over the following five configurations:
$$\begin{matrix} \begin{matrix} {\rm dimer}\\
{\rm configuration} :\\ {}\\{} \end{matrix} &\raisebox{-.3cm}{\hbox{\epsfxsize=2.5cm \epsfbox{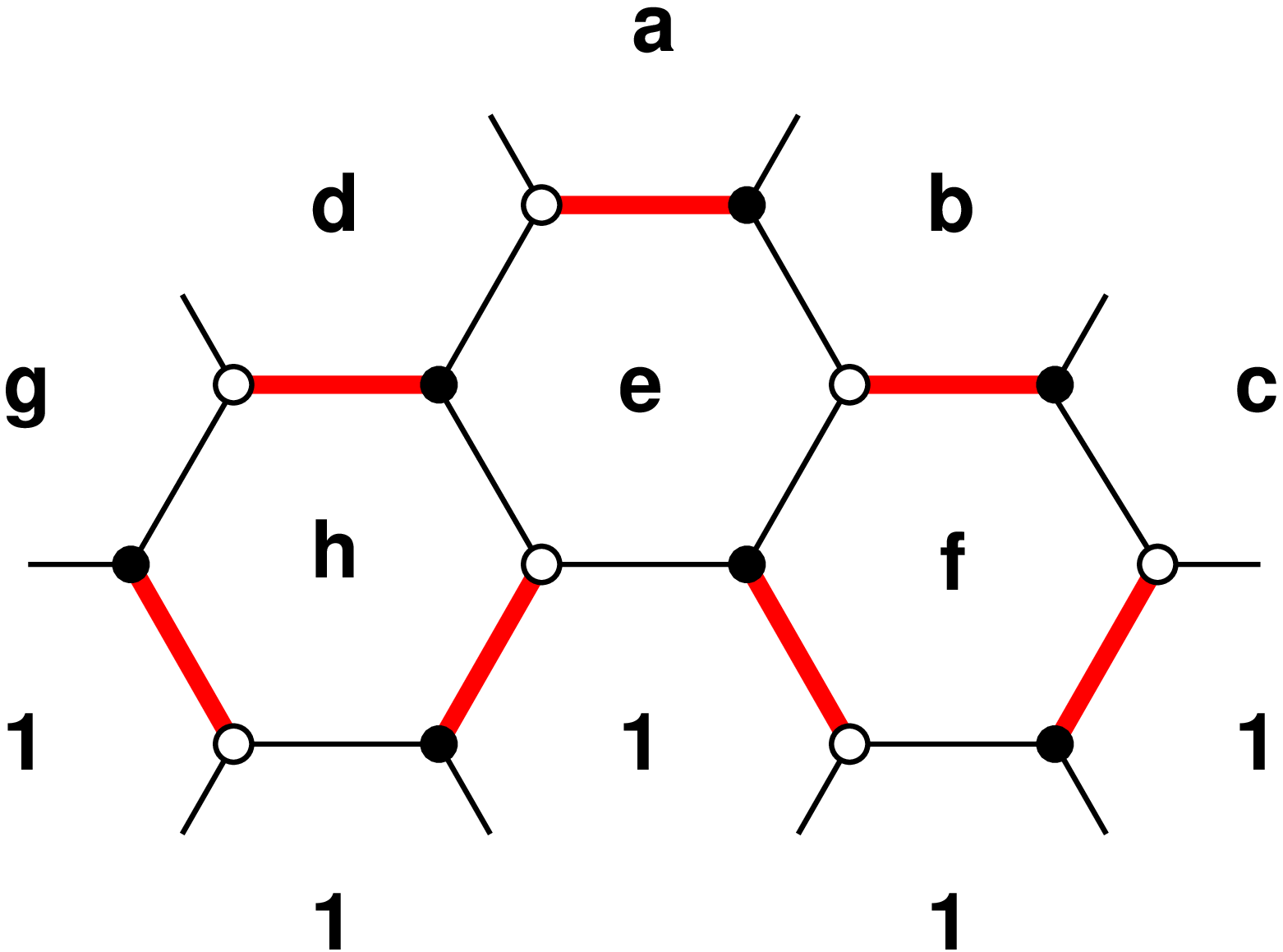}}} 
& \raisebox{-.3cm}{\hbox{\epsfxsize=2.5cm \epsfbox{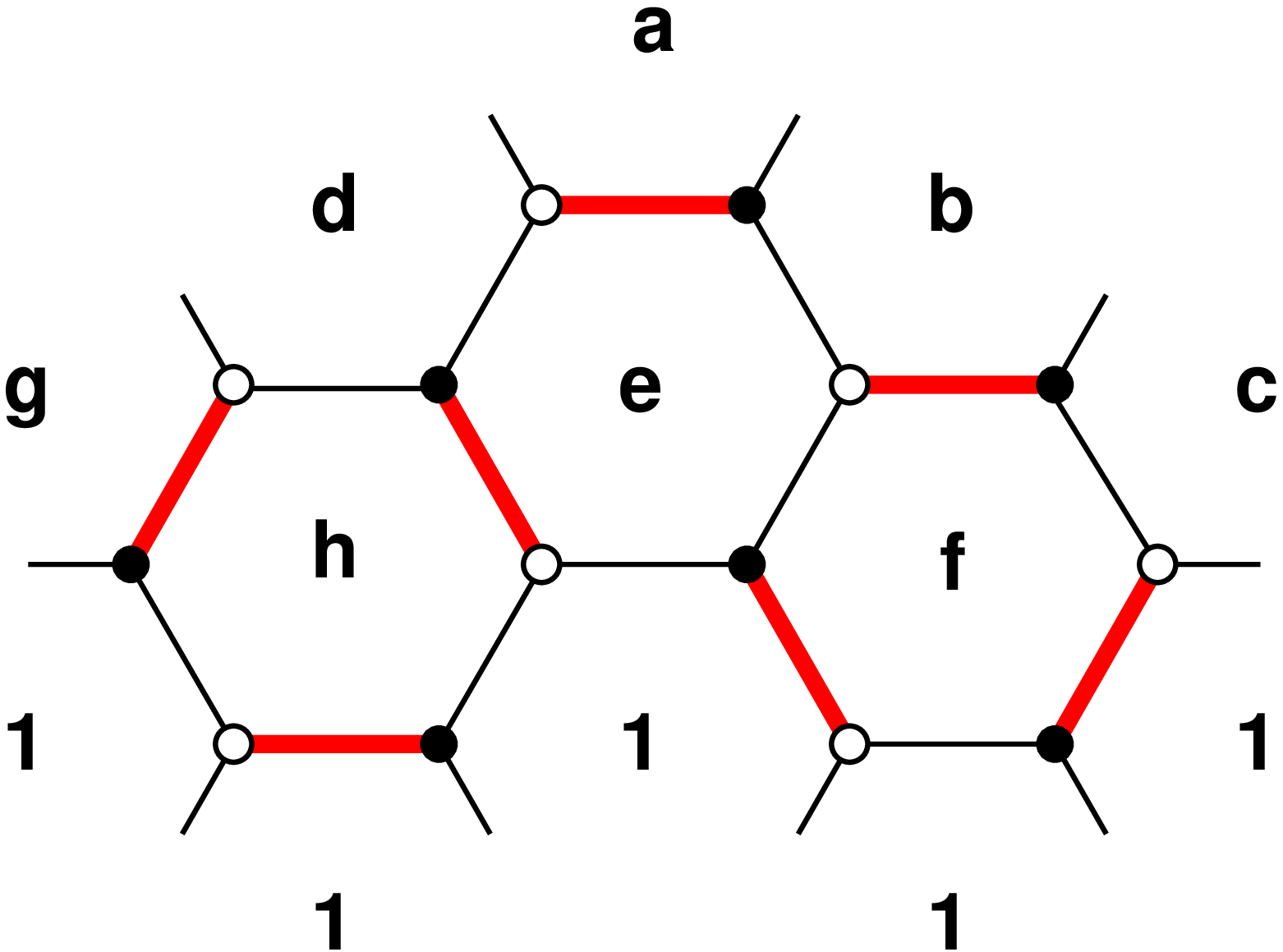}}}& 
\raisebox{-.3cm}{\hbox{\epsfxsize=2.5cm \epsfbox{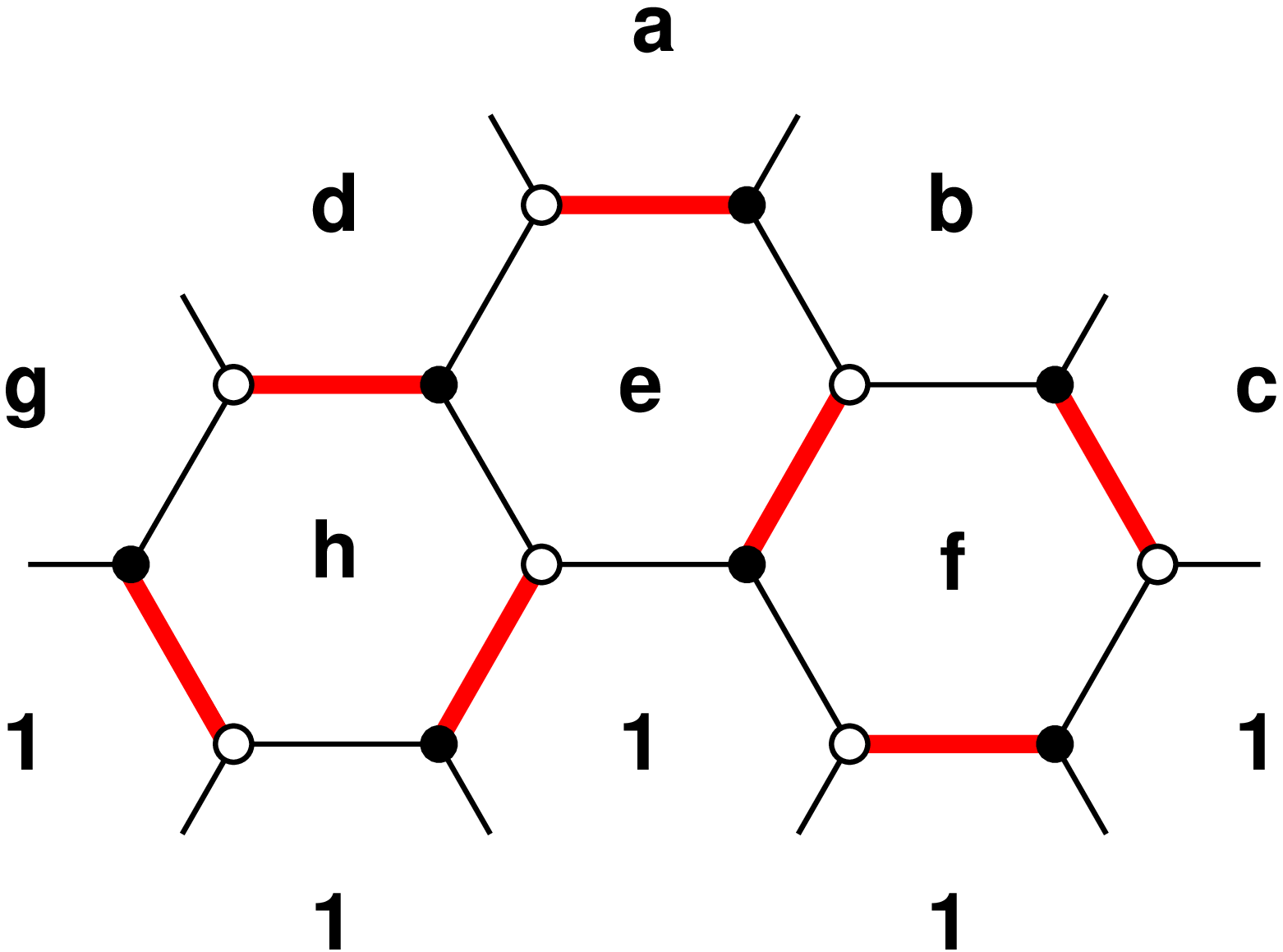}}} 
& \raisebox{-.3cm}{\hbox{\epsfxsize=2.5cm \epsfbox{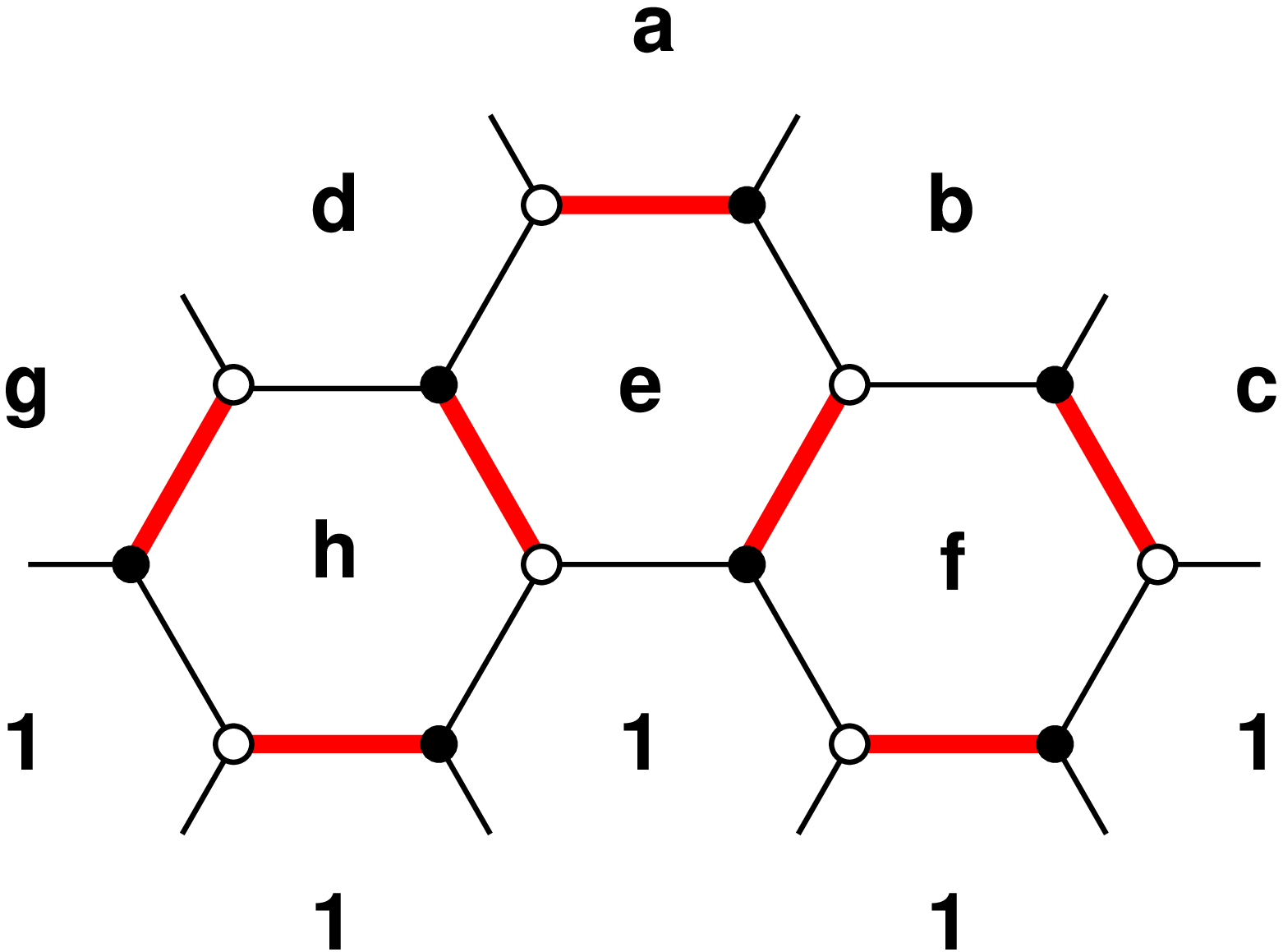}}}
&\raisebox{-.3cm}{\hbox{\epsfxsize=2.5cm \epsfbox{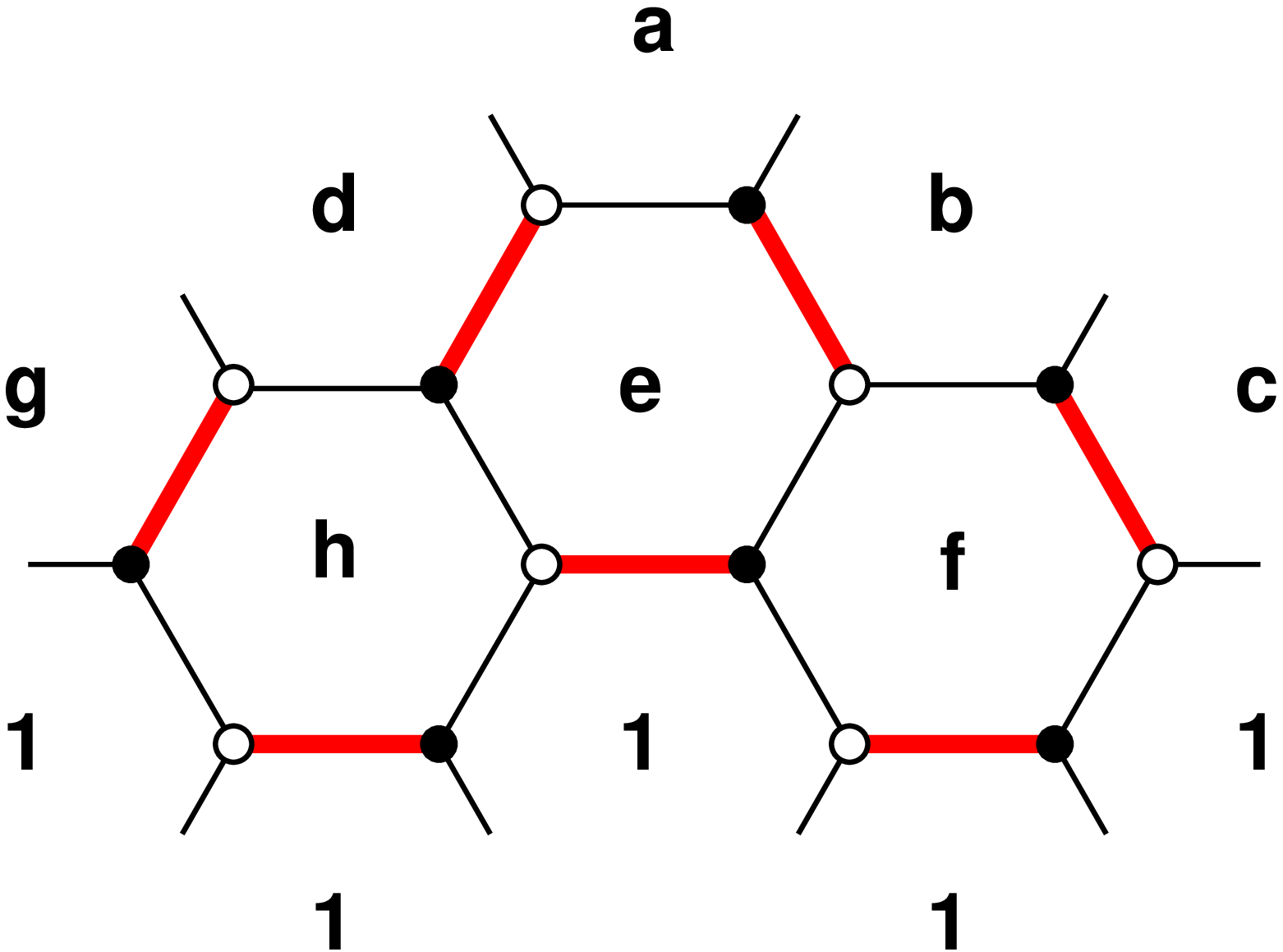}}} \\
{\rm weight}: &\frac{g e}{f h} &\frac{d c}{fh} &\frac{g b}{f h} &\frac{b d}{efh} & \frac{a}{e}\\
\end{matrix}
$$
\end{example}

We note that Theorems \ref{thpath} and \ref{thdim} may be connected more directly by showing that the path and dimer configurations
are in bijection with each-other. To best see this, recall that the dimer configurations on a domain of  the hexagonal lattice are in
bijection with rhombus tilings of the dual (triangular) lattice, by means of three types of rhombi obtained by gluing two adjacent triangles along 
their common edge. 
For the above example of $\lambda_{a,b}=(5,4,4,4,2)$, the domain ${\mathcal T}_{a,b}$ of the triangular lattice dual to ${\mathcal G}_{a,b}$ reads:
$$  \raisebox{-1.3cm}{\hbox{\epsfxsize=8.cm \epsfbox{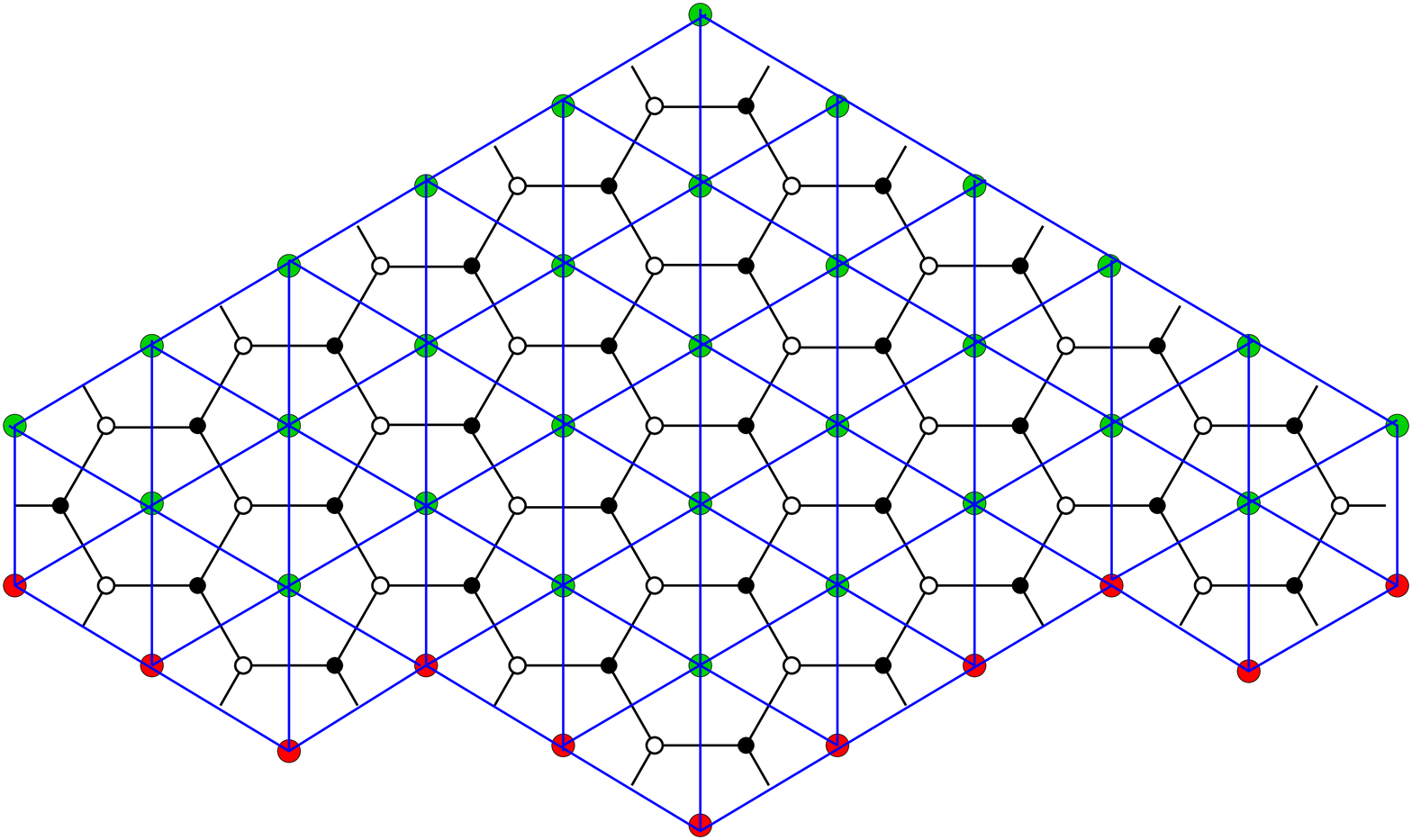}}} $$
Notice that generically the domain ${\mathcal T}_{a,b}$ only has two vertical boundary edges, dual to the only two horizontal external edges of 
${\mathcal G}_{a,b}$.
Dimer configurations on ${\mathcal G}_{a,b}$ are in bijection with rhombus tilings of ${\mathcal T}_{a,b}$. 
Moreover, such tilings are uniquely determined by either of three sets of non-intersecting paths of rhombi 
(the so-called De Bruijn lines) defined as follows. 
Each boundary edge of ${\mathcal T}_{a,b}$ has either of three orientations (vertical, $+30^\circ$, or $-30^\circ$). Starting from
any boundary edge, let us construct the chain of consecutive rhombi that share only edges of the same orientation. Such a chain is a path
connecting two opposite boundary edges. For a given orientation of the boundary edge, all such paths are non-intersecting, and form one of the above-mentioned three families. Any single such family determines the tiling entirely. In the present case, the ``vertical" family is particularly
simple, as it is made of a single path of rhombi:
$$  \raisebox{-1.3cm}{\hbox{\epsfxsize=8.cm \epsfbox{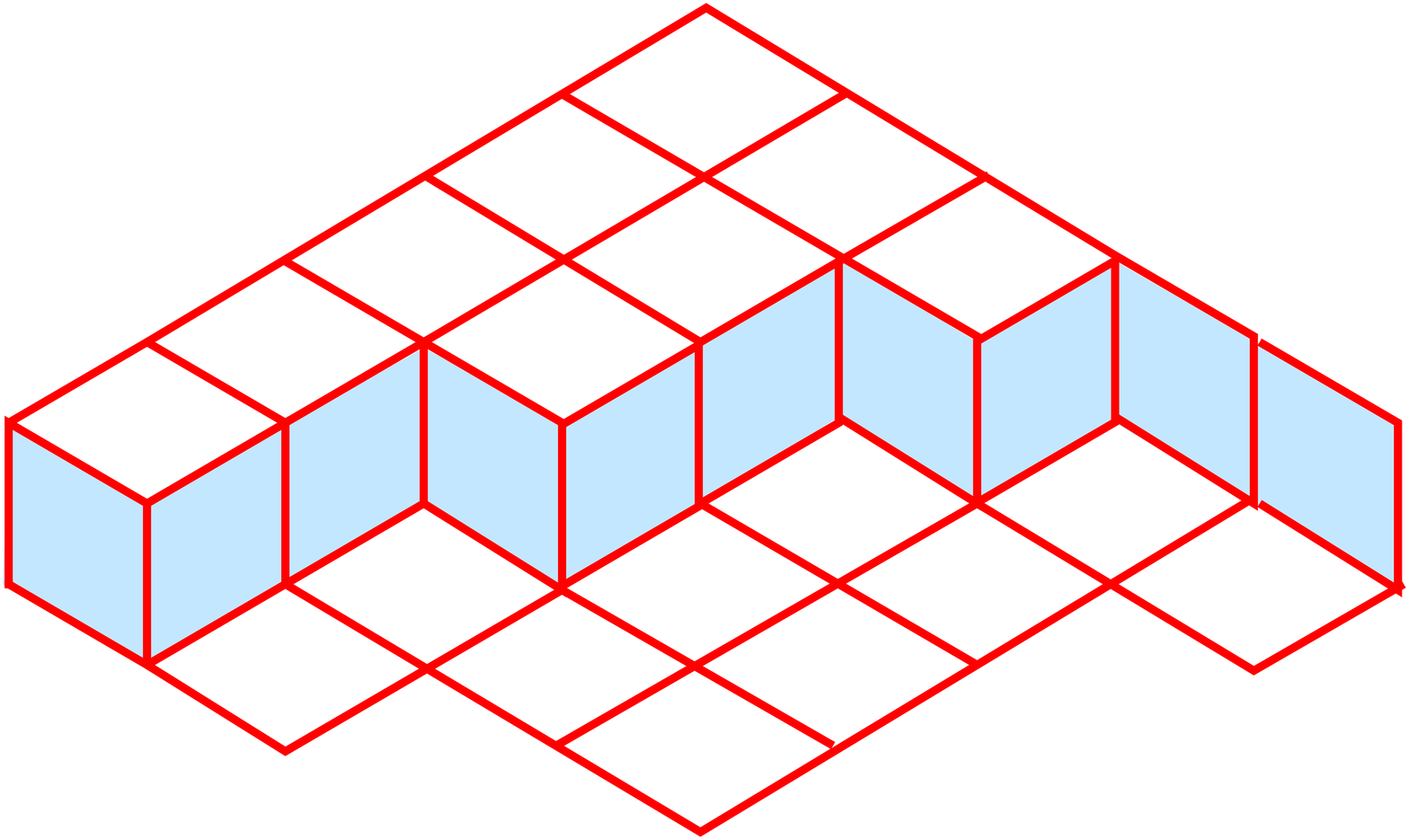}}} $$ 
This path determines the tiling entirely. We have therefore associated to each dimer configuration
on ${\mathcal G}_{a,b}$ a unique path with up and down steps (say from left to right), which can be represented on the network $L(j_0,j_1)={\mathcal N}_{a,b}$ as a path from the leftmost vertex to the rightmost one.
This gives a bijection between the dimer configurations of Theorem \ref{thdim} and the paths on networks of Theorem \ref{thpath}.
It is easy to show how the local weights of the path model can be redistributed into those of the dimer model, thus establishing directly the equivalence between the two theorems.

\subsection{Back to the polynomials of Bessenrodt and Stanley}\label{sectobs}

In this section we give two independent proofs of Theorem \ref{restrictoBS}.

As pointed out earlier, the polynomials of \cite{BS} may be recovered by specializing the variables $\theta_{i,j}$ according to \eqref{bsrestrict}.
More precisely, the (restricted) polynomials $p_{a,b}(x_\cdot)$ of \cite{BS} are defined by the formula:
\begin{equation}\label{BSp} p_{a,b}(x_\cdot)=\left\{ \begin{matrix}1 &\quad  {\rm if} (a,b)\in \lambda^*\setminus \lambda\\
{} & {}\\
\sum_{\mu \subset \lambda_{a,b}} \prod_{(r,s)\in \lambda_{a,b}\setminus \mu} x_{r,s} & {\rm if}\ (a,b)\in \lambda \end{matrix}\right. 
\end{equation}
where in the second formula the sum extends over all sub-diagrams $\mu$ of $\lambda_{a,b}$. In \cite{BS}, the determinant of 
the array $p_{a+i-1,b+j-1}(x_\cdot)$, $1\leq i,j\leq n_{a,b}$ pertaining to the square $S_{a,b}$ of $\lambda$ with NW corner $(a,b)$, was computed
to be equal to the ``leading term":
\begin{equation}\label{zdef} Z_{a,b}=\prod_{i=1}^{n_{a,b}} \prod_{(\al,\beta)\in \lambda_{a+i-1,b+i-1}} x_{\al,\beta}\ ,
\end{equation}
equal to the product of leading terms of 
$p_{a+i-1,b+i-1}(x_\cdot)$ along the first diagonal. With the choice of restrictions \eqref{bsrestrict} which identify $\theta_{a,b}$ with $Z_{a,b}$, 
and by uniqueness of the $T$-system solution,
we deduce that the polynomials $p_{a,b}(\theta_\cdot)$ defined by the $T$-system solution are identical to the polynomials 
$p_{a,b}(x_\cdot)$ of \cite{BS}, and Theorem \ref{restrictoBS} follows.

Let us now give an alternative direct proof of this result, by comparing the expression 
\eqref{BSp} to the restriction of the network expression of Theorem
\ref{thpath} for the solution of the $T$-system with steepest initial data stepped surface.
The first part of the formula \eqref{BSp}  is clear from the choice $\theta_{a,b}=1$ for  $(a,b)\in \lambda^*\setminus \lambda$. To 
recover the second part, first note that there is a bijection between the sub-diagrams $\mu \subset \lambda_{a,b}$ 
and the paths from the leftmost vertex
to the rightmost vertex on the network ${\mathcal N}_{a,b}$. To identify the polynomials,
we simply have to check that the weight of each path,
multiplied by the rightmost face variable ($t_{1,j_1}\equiv \theta_{a,\lambda_a^*}=1$ here) reduces to 
$\prod_{(r,s)\in \lambda_{a,b}\setminus \mu} x_{r,s}$, namely the product of $x$ variables under the path in $\lambda_{a,b}$.

To best compare the two settings, let us translate the network $L(j_0,j_1)$ globally by the vector by $(0,0,-2)$ in the $L_{FCC}$
representation (namely $k\to k-2$), so that the face labels match the positions of the corresponding boxes in $\lambda_{a,b}$.
This changes the local edge weight rules accordingly (by moving the face variables by two steps downwards).
In this new representation, the local edge weights of the path model can now be written in terms of the $x$ variables as:
$$  \raisebox{-1.3cm}{\hbox{\epsfxsize=14.cm \epsfbox{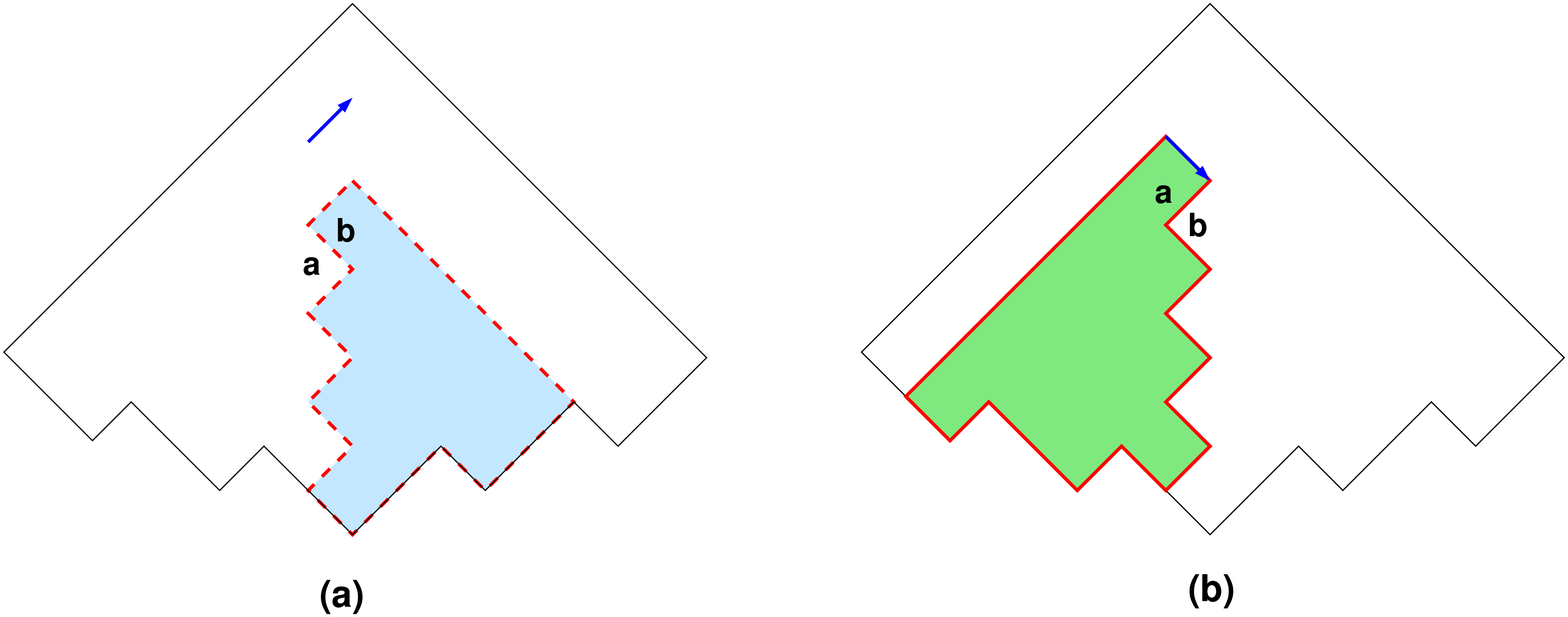}}}$$
where we have shown the two possible cases of an up or down-pointing edge of the path, both with weight $a/b$, and represented 
the weight in terms of the box variables $x$ as follows: in case (a) (up step), the weight $a/b$ is the inverse of the product of the $x$'s
in the dashed (blue) domain, while in case (b) (down step), the weight $a/b$ is the product of the $x$'s in the solid (green) domain.
We deduce that only boxes below the path delimiting $\mu$ in $\lambda_{a,b}$ contribute. Moreover, a given such box with variable $x$ receives a
contribution $x^{N_{\rm down}-N_{\rm up}}$, where $N_{\rm up}$ and $N_{\rm down}$ denote the total number of up/down steps of the path
that respectively belong to the left/right sector seen from the box as depicted below:
$$ \raisebox{-1.3cm}{\hbox{\epsfxsize=4.cm \epsfbox{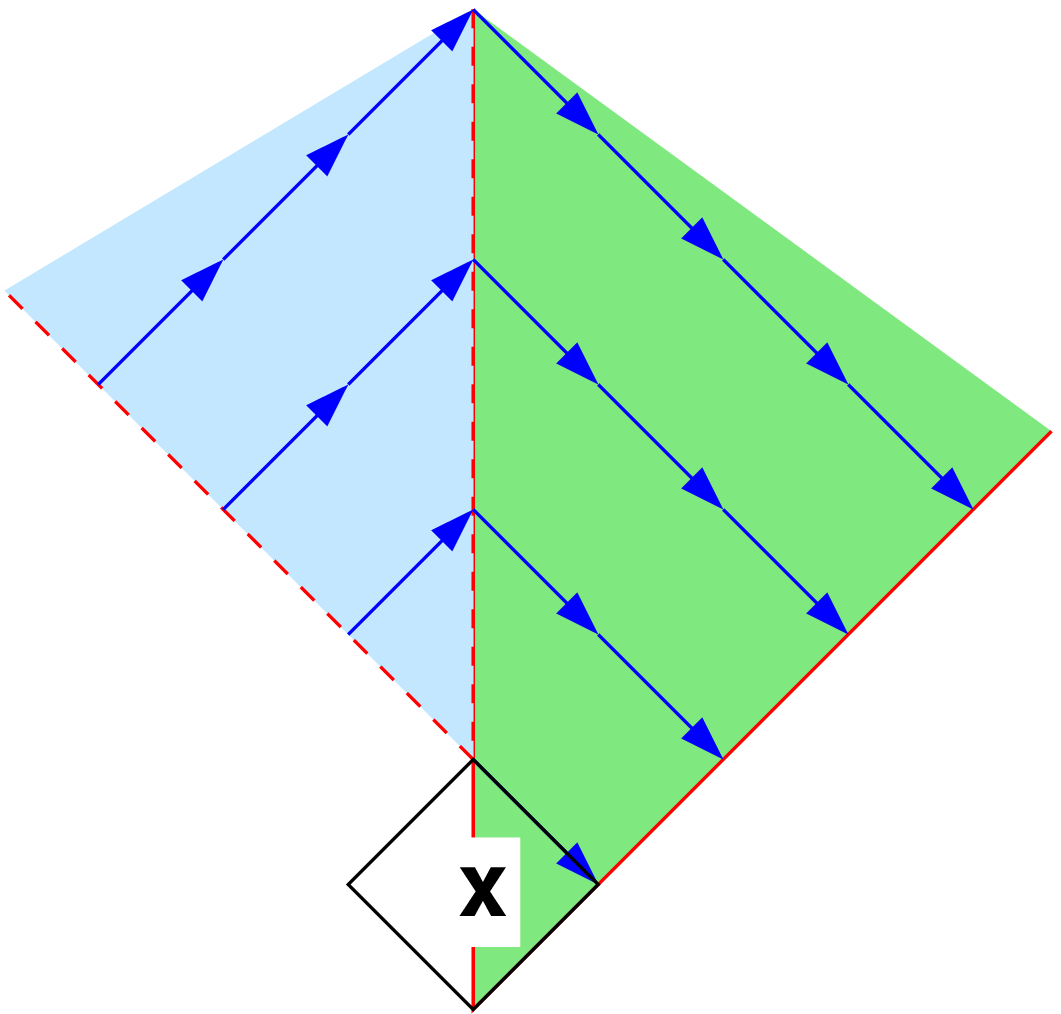}}} $$
It is clear that we always have $N_{\rm down}-N_{\rm up}=1$ as the path always goes up one step less in the left sector 
than it goes down in the right sector.
The total weight of the path delimiting $\mu$ in $\lambda_{a,b}$ is therefore the product over all the boxes below $\mu$ of the box variables $x$,
and the Theorem follows.

\section{3D Generalization}

\subsection{The general solution of the $T$-system}

So far we have concentrated on the solution $T_{j,k}=T_{1,j,k}$ of the $T$-system in the $i=1$ plane.
The general solution of the $T$-system gives access more generally to values of $T_{i,j,k}$ in other planes $i\geq 2$ as well.
In Ref.\cite{DF}, it was shown that such solutions are partition functions of families of $i$ non-intersecting paths on  the
same type of network as for $i=1$. More precisely, we first consider the base of the pyramid $\Pi_{i,j,k}$, which is a square
array of $T_{i+b-a,j+i+2-a-b}$, $a,b=1,2...,i$. We then construct all left and right projections of the points in this array,
say $\ell_1,...,\ell_i$ and $r_i,r_{i-1},...,r_1$ from left to right. Then the solution $T_{i,j,k}$ is given by the following:
\begin{thm}{\cite{DF}}
The solution $T_{i,j,k}$ of the $A_{\infty/2}$ $T$-system with initial data $(\bk,\bt)$ is equal to the partition function
of $i$ non-intersecting paths on the network $N(\ell_1,r_1)$ that start from the points $(\ell_a,k_{\ell_a})$ on the stepped surface $\bk$
and end at the points $(r_a,k_{r_a})$ on the stepped surface $\bk$, multiplied by the boundary term:
$ \prod_{a=2}^i t_{1,\ell_a}^{-1} \prod_{b=1}^i t_{1,r_a} $.
\end{thm}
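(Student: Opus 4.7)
The plan is to combine the Wronskian identity \eqref{wrons} with Theorem \ref{netsol} and the Lindström–Gessel–Viennot (LGV) lemma. First I would start from
\[
T_{i,j,k}=\det\bigl(T_{1,\,j+b-a,\,k+i+1-a-b}\bigr)_{1\le a,b\le i},
\]
which reduces the claim to an $i\times i$ determinant of solutions at level $i=1$, each already covered by Theorem \ref{netsol}.

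Next I would apply Theorem \ref{netsol} to each entry. Writing $j'=j+b-a$ and $k'=k+i+1-a-b$, one checks that $j'+k'=j+k+i+1-2a$ depends only on $a$ while $j'-k'=-i-1+2b$ depends only on $b$; consequently the left projection of the $(a,b)$-entry onto the stepped surface $\bk$ depends only on $a$ (call it $\ell_a$) and the right projection only on $b$ (call it $r_b$). The labeling convention in the statement orders the $\ell_a$ increasingly from left to right along the left staircase of $\bk$ and the $r_b$ decreasingly from left to right along the right staircase. Thus the $(a,b)$-entry equals $t_{1,r_b}$ times the path partition function $P_{\ell_a\to r_b}$ on the sub-network $N(\ell_a,r_b)$, which embeds naturally in the largest network $N(\ell_1,r_1)$.

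At this point I would factor $\prod_{b=1}^i t_{1,r_b}$ out of the columns of the determinant, so that what remains is $\det(P_{\ell_a\to r_b})$, a determinant of path partition functions between labeled boundary vertices of a single planar weighted network $N(\ell_1,r_1)$. Here LGV applies: with the sources arranged monotonically along the left staircase of the stepped surface and the sinks arranged monotonically along the right staircase, the only permutation yielding non-crossing path systems is the identity, and the determinant equals the partition function of $i$-tuples of non-intersecting paths with the $a$-th path running from $(\ell_a,k_{\ell_a})$ to $(r_a,k_{r_a})$ on $\bk$.

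The remaining factor $\prod_{a=2}^i t_{1,\ell_a}^{-1}$ would then arise from the book-keeping step of relocating the path sources from the ``leftmost entry connector'' of each sub-network $N(\ell_a,\cdot)$ to the vertex $(\ell_a,k_{\ell_a})$ of the stepped surface inside the big network; equivalently, it corrects for the differing face-variable normalizations used when $N(\ell_a,r_b)$ is viewed as a piece of $N(\ell_1,r_1)$. The main technical obstacle I anticipate is precisely this last step: verifying that the planarity of the source/sink arrangement on the stepped surface forces LGV to produce only non-crossing matchings pairing $\ell_a$ with $r_a$, and then carrying out the face-weight bookkeeping so that exactly one copy of $t_{1,\ell_a}^{-1}$ appears for each $a\ge 2$ (with no such factor at $a=1$, reflecting the asymmetry between the ``outermost'' path and the inner ones). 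Once these two points are settled, the remaining identification with the stated formula is purely combinatorial.
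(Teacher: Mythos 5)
The paper does not actually prove this statement itself---it is quoted verbatim from \cite{DF}---but your route (Desnanot--Jacobi/Wronskian reduction \eqref{wrons} to an $i\times i$ determinant of $i=1$ solutions, Theorem \ref{netsol} applied to each entry, then Lindstr\"om--Gessel--Viennot \cite{LGV1,LGV2}) is exactly the argument of \cite{DF} that the paper invokes again later in Section 5, so your approach is the intended one. Two small points: your projections are transposed, since the left projection is determined by $j'-k'$ and the right by $j'+k'$, so the left projection of the $(a,b)$ entry depends only on $b$ and the right only on $a$ (harmless, as it merely transposes the determinant); and the factor $\prod_{a=2}^{i} t_{1,\ell_a}^{-1}$, which you flag but do not derive, is precisely the row-dependent renormalization incurred when the connector-$1$ entry point of the sub-network $N(\ell_a,r_b)$ for $a\ge 2$ is identified with the interior vertex $(\ell_a,k_{\ell_a})$ of $N(\ell_1,r_1)$, there being no correction at $a=1$ because $N(\ell_1,r_b)$ is already left-aligned with the full network.
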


\subsection{Pyramid of a partition}

Starting from a partition/Young diagram $\lambda=(\lambda_1,...,\lambda_N)$, we define its  {\it pyramid} $P_\lambda$ as the family
of partitions/Young diagrams $\lambda^{(i)}$, $i=1,2,...,k$, such that (i) $\lambda^{(1)}=\lambda$ (ii) $\lambda^{(i+1)}$ 
is obtained from $\lambda^{(i)}$ by removing its first row and column (iii) $\lambda^{(k)}=\emptyset$. 

The centers of boxes of the pyramid $P_\lambda$ may be represented as the set of vertices 
$(i,j,k)$ in $L_{FCC}$ such that the base of the pyramid $\Pi_{i,j,k}$ in the plane $i=1$ is entirely contained in $\lambda$.
In this correspondence, $\lambda^{(m)}$ is simply the intersection of the plane $i=m$ with this set of vertices. Another representation
of the pyramid $P_\lambda$ is as a strip decomposition of $\lambda$, by superimposing all the diagrams $\lambda^{(m)}$, with their $(1,1)$
box in the same position.
Finally, we define the extended pyramid $P_\lambda^*$ to be the pyramid of the extended Young diagram $\lambda^*$, namely
$P_\lambda^*=P_{\lambda^*}$.

\begin{example}
The pyramid of the partition $\lambda=(4,4,3)$ is  $\lambda^{(1)}=(4,4,3)$, $\lambda^{(2)}=(3,2)$, $\lambda^{(3)}=(1)$, 
$\lambda^{(4)}=\emptyset$. The representation of $P_\lambda$ in $L_{FCC}$ and as a strip decomposition of $\lambda$ are respectively:
$$ \raisebox{-1.5cm}{\hbox{\epsfxsize=6cm \epsfbox{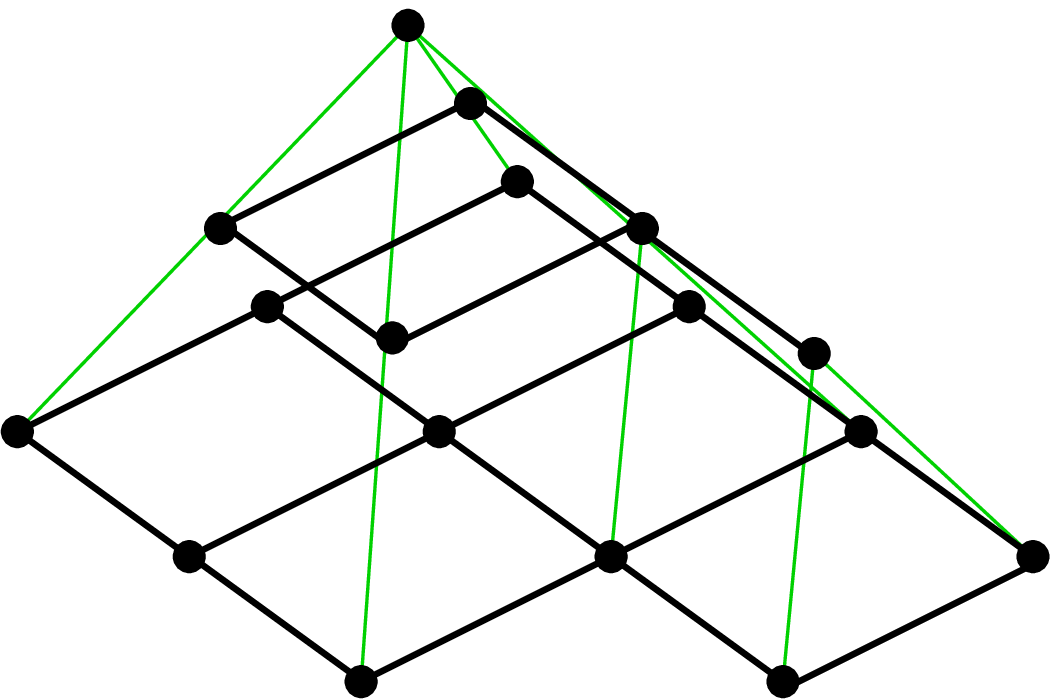}}} \quad {\rm and}\quad 
\raisebox{-1.5cm}{\hbox{\epsfxsize=4cm \epsfbox{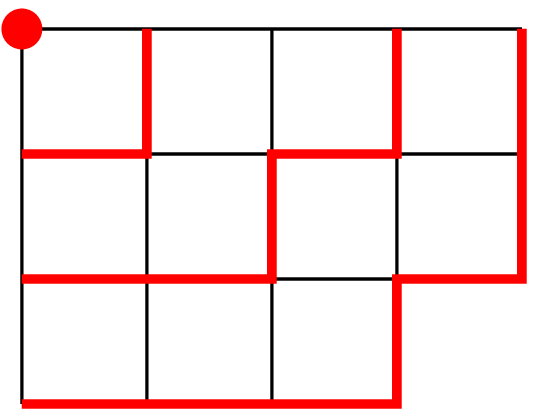}}}$$
\end{example}

\subsection{The family of Laurent polynomials for a pyramid}

The solution of the $A_{\infty/2}$ $T$-system within a pyramid $P_\lambda$ defined as above is entirely fixed by the 
assignment of initial data on the ``roof" of the pyramid, defined by $P_\lambda^*\setminus P_\lambda$. Note that this roof is nothing but the 
portion of the steepest stepped surface that determines the polynomials $p_{a,b}$ entirely. This leads to a natural extension of the family
of polynomials $\{p_{a,b}\}_{(a,b)\in \lambda}$ into a pyramid family $\{p_{a,b,m}\}$ with $1\leq m\leq k$ and
$(a,b)\in \lambda^{(m)}$, obtained by identification of the solution $T_{i,j,k}$ at the corresponding vertex of $P_\lambda\subset L_{FCC}$.

As a consequence of this definition, we may rewrite \eqref{wrons} as:

\begin{thm}
The pyramid polynomials $p_{a,b,m}(\theta_\cdot)$ associated to a Young diagram $\lambda$ are entirely determined by the determinant identity:
\begin{equation}\label{deterp} p_{a,b,m}(\theta_\cdot)=\det\left(\left( p_{a+i-1,b+j-1}(\theta_\cdot)\right)_{1\leq i,j,\leq m}\right) \end{equation}
where the array of points in the determinant corresponds to the base in the plane $i=1$ of the pyramid with apex at the center of the box 
$(a,b)$ of $\lambda^{(m)}$ in the $L_{FCC}$ representation.
\end{thm}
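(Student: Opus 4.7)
The proof is essentially a direct application of the Desnanot--Jacobi determinant identity \eqref{wrons}, translated through the dictionary provided by Theorem \ref{pT}. The case $m=1$ of \eqref{deterp} is tautological, so I would concentrate on $m\geq 2$.

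First, I would make precise the identification of $p_{a,b,m}$ with a specific $T$-system value $T_{m,j_*,k_*}$ extending Theorem \ref{pT}. By definition of the embedding of $P_\lambda$ in $L_{FCC}$, the apex at the center of the box $(a,b)\in\lambda^{(m)}$ is the apex of a pyramid $\Pi_{m,j_*,k_*}$ whose base in the plane $i=1$ is the $m\times m$ square $S_{a,b}\subset\lambda$ with NW corner $(a,b)$. A short coordinate computation using the frame change $j=\alpha-\beta+\lambda_1$, $k=\lambda_1+2-\alpha-\beta$ of Theorem \ref{pT} gives $j_*=a-b+\lambda_1$ and $k_*=\lambda_1+3-m-a-b$, and by definition $p_{a,b,m}(\theta_\cdot)=T_{m,j_*,k_*}$.

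Next, I would apply identity \eqref{wrons} to this $T$-value:
\[
T_{m,j_*,k_*}=\det\!\left(\left(T_{j_*+B-A,\,k_*+m+1-A-B}\right)_{1\leq A,B\leq m}\right).
\]
Each entry is a level-$1$ value $T_{1,j',k'}$ at an FCC vertex $(1,j',k')$ in the base of the pyramid, hence by Theorem \ref{pT} equal to $p_{\alpha,\beta}(\theta_\cdot)$ where $(\alpha,\beta)\in\lambda^*$ is the box with FCC coordinates $(j',k')$.

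Finally, I would match indices. Solving $j'=\alpha-\beta+\lambda_1$ and $k'=\lambda_1+2-\alpha-\beta$ with $(j',k')$ as above yields $\alpha=a+B-1$ and $\beta=b+A-1$. The resulting matrix is the transpose of $(p_{a+i-1,b+j-1})_{1\leq i,j\leq m}$, and since $\det$ is invariant under transposition, this proves \eqref{deterp}. The main obstacle is purely bookkeeping of the FCC frame versus the original Young-diagram coordinates; the underlying content is just the Desnanot--Jacobi identity already built into the $T$-system machinery.
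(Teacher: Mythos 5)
Your proof is correct and follows essentially the same route as the paper, which presents the theorem as an immediate consequence of the definition $p_{a,b,m}=T_{m,j,k}$ at the corresponding vertex of $P_\lambda$ together with the Wronskian identity \eqref{wrons}. You merely make explicit the coordinate bookkeeping (the values $j_*=a-b+\lambda_1$, $k_*=\lambda_1+3-m-a-b$ and the harmless transposition of the matrix), which checks out.
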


The network interpretation of the solution of the $T$-system \cite{DF} allows to immediately interpret the pyramid polynomial $p_{a,b,m}$
as the partition function for $m$ non-intersecting paths on the network ${\mathcal N}_{a,b}$ associated to $\lambda_{a,b}$ 
and the steepest stepped surface, up to a multiplicative boundary factor, by direct application of the Lindstr\"om Gessel-Viennot Theorem
\cite{LGV1,LGV2}. These paths start/end at the points of 
$\bk$ that correspond to the left/right projections of the array of points in the determinant \eqref{deterp}. On ${\mathcal N}_{a,b}$,
these are the left/right  projections of the top vertex of each box in the corresponding square of size $m$ with top box $(a,b)$.

\begin{example}
Let us consider $\lambda=(5,4,4,4,3)$ and the network $L(j_0,j_1)={\mathcal N}_{1,1}$ of Sect.\ref{networksec}. 
The pyramid polynomial $p_{1,1,3}$
is equal to the determinant $\tiny \left\vert \begin{matrix} p_{1,1}& p_{1,2} & p_{1,3} \\
p_{2,1} & p_{2,2} & p_{2,3} \\
p_{3,1} & p_{3,2} & p_{3,3} \end{matrix}\right\vert$. 
It is also proportional to the partition function of paths from the entry to the exit vertices marked
$1,2,3$ below:
$$ \raisebox{0.cm}{\hbox{\epsfxsize=16.cm \epsfbox{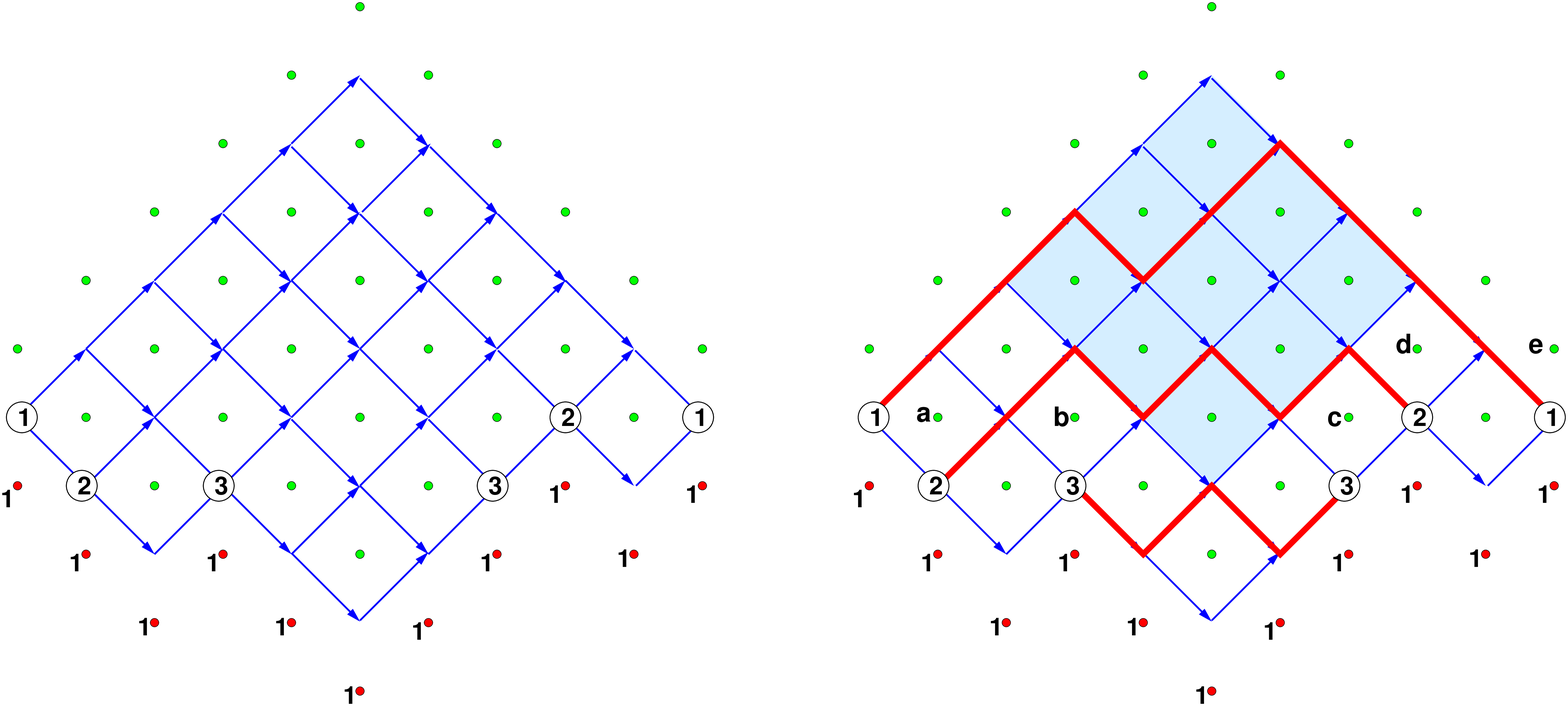}}}  $$
where we have also represented a sample configuration of these three non-intersecting paths. 
The entry/exit vertices of ${\mathcal N}_{a,b}$ are the left/right projections of the vertices at the top of the boxes in the corresponding square
(here shaded in blue).

The proportionality factor
is simply $a^{-1}b^{-1} c d e$, where the corresponding face variables are immediately above the entry points $2,3$ and exit points $1,2,3$.
\end{example}

\subsection{Generalized Bessenrodt-Stanley polynomials for a pyramid}

We may now restrict the pyramid Laurent polynomials $p_{a,b,m}(\theta_\cdot)$ attached to a partition $\lambda$ via the same
change of variables \eqref{bsrestrict} to box weights $x_{i,j}$. This leads us to the definition of the quantities:
\begin{equation}\label{BSpm}p_{a,b,m}(x_\cdot)=\det\left(\left( p_{a+i-1,b+j-1}(x_\cdot)\right)_{1\leq i,j,\leq m}\right) \end{equation}


We have:

\begin{thm}\label{genBS}
The pyramid polynomials $p_{a,b,m}(x_\cdot)$ \eqref{BSpm} have non-negative integer coefficients, and moreover
$p_{a,b,m}(x_\cdot)$ is the partition function for $m$ non-intersecting paths on the network ${\mathcal N}_{a,b}$ associated to $\lambda_{a,b}$,
from the $m$ left projections to the $m$ right projections of the vertices of the square of size $m$ with NW corner at $(a,b)$.
Alternatively, $p_{a,b,m}(x_\cdot)$ is the partition function for $m$ strictly nested partitions 
$\emptyset\subset \mu_m\subset\mu_{m-1}\subset\cdots\subset\mu_1\subset\lambda_{a,b}$ with $\mu_i\subset\lambda_{a,b}^{(i)}$
inside $\lambda_{a,b}$, with the usual weights:
$$p_{a,b,m}(x_\cdot)=\sum_{\mu_i\subset\lambda_{a,b}^{(i)}\atop i=1,2,...,m}
\prod_{i=1}^m \prod_{(\al,\beta)\in \lambda_{a,b}\setminus\mu_i}x_{\al,\beta}$$
\end{thm}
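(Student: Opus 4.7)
The plan is to combine the general $T$-system network interpretation of the pyramid polynomial $p_{a,b,m}(\theta_\cdot)$, stated at the beginning of Section~5.1, with the bijective weight analysis of Section~\ref{sectobs} (extended from a single path to a family of $m$ non-intersecting paths via Lindstr\"om-Gessel-Viennot). The resulting sum over nested sub-partitions will then make non-negativity of the integer coefficients manifest.

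First, I would apply the general network theorem of Section~5.1 to the steepest stepped surface $\bk_\pi$. This identifies $p_{a,b,m}(\theta_\cdot)$ with the partition function of $m$ non-intersecting paths on the network ${\mathcal N}_{a,b}$, whose sources and sinks are the left and right projections onto $\bk_\pi$ of the top vertices of the diagonal boxes $(a,b),(a{+}1,b{+}1),\ldots,(a{+}m{-}1,b{+}m{-}1)$ of the size-$m$ square with NW corner $(a,b)$, multiplied by a boundary factor involving the source/sink face variables $t_{1,\ell_i}$ and $t_{1,r_i}$. Specializing via \eqref{bsrestrict}, each such face value either lies in $\lambda^*\setminus\lambda$ (and so equals $1$) or is an explicit monomial in the $x$-variables given by \eqref{bsrestrict}; extending the $m=1$ analysis of Theorem~\ref{thpath}, I would check that the full boundary factor collapses to $1$.

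Second, I would promote the path/sub-partition bijection of Section~\ref{sectobs} to families of $m$ non-intersecting paths. Each individual path on ${\mathcal N}_{a,b}$ starting and ending at the prescribed $i$-th source/sink traces the boundary of a sub-partition $\mu_i$ constrained by the shift of endpoints to lie inside $\lambda_{a,b}^{(i)}$. Non-intersection of the $m$ paths is equivalent to the strict nesting $\emptyset\subset\mu_m\subset\mu_{m-1}\subset\cdots\subset\mu_1\subset\lambda_{a,b}$. The local edge-weight rule of Section~\ref{sectobs}, applied path-by-path, assigns to the $i$-th path the monomial $\prod_{(\alpha,\beta)\in\lambda_{a,b}\setminus\mu_i}x_{\alpha,\beta}$; taking the product over $i=1,\ldots,m$ and summing over nested families gives the claimed formula. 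Non-negativity of the coefficients is then immediate, since each contribution is a positive monomial with coefficient $+1$.

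The main obstacle will be the bookkeeping of the boundary factor $\prod_{a=2}^{m}t_{1,\ell_a}^{-1}\prod_{b=1}^{m}t_{1,r_b}$ under the BS restriction: one must match the source/sink data of the $m$-path theorem of Section~5.1 against the steepest-surface data of Section~\ref{steepest} and verify that, after substitution of \eqref{bsrestrict}, every surviving factor either equals $1$ or cancels against an explicit edge-weight contribution. Once this is verified for one size-$m$ square, the argument applies uniformly to all $(a,b)\in\lambda^{(m)}$, and the three claims of the theorem follow simultaneously.
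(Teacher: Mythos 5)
Your proposal follows essentially the same route the paper intends: Theorem \ref{genBS} is stated without a separate proof precisely because it is the combination of the $m$-non-intersecting-path network interpretation of Sect.~5.3 (via Lindstr\"om--Gessel--Viennot) with the specialization \eqref{bsrestrict} and the path-to-subpartition weight bijection of Sect.~\ref{sectobs}, which is exactly what you assemble. Your flag on the boundary factor $\prod_{\alpha=2}^m t_{1,\ell_\alpha}^{-1}\prod_{\beta=1}^m t_{1,r_\beta}$ is the right point to check, and it does collapse to $1$ because all projection points lie on the border strip $\lambda^*\setminus\lambda$, where every $\theta$ is set to $1$.
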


\section{Conclusion/Discussion}

In this paper, we have expressed the polynomials of \cite{BS} as particular solutions of the octahedron recurrence 
in a half-space with ``steepest" initial data surface attached to a fixed partition $\lambda$. This connection has allowed us to generalize these polynomials to the full pyramid $P_\lambda$, and to find alternative expressions involving paths on networks or dimers on bipartite graphs.

More generally, we may consider different initial data surfaces associated to the partition $\lambda$, and use the solutions of the corresponding 
$T$-system to define different classes of polynomials attached to the boxes of $\lambda$. Another natural choice is to pick the stepped surface to be made of ``vertical walls" along the boundary of $\lambda$, namely with vertices alternating between two parallel planes with normal vector
$(0,1,1)$ or $(0,-1,1)$ in $L_{FCC}$. In the case of $\lambda=(5,5,5,5,3)$, the 3D FCC lattice view and the corresponding 
stepped surface lozenge decomposition look like:
$$  \raisebox{-1.3cm}{\hbox{\epsfxsize=8.cm \epsfbox{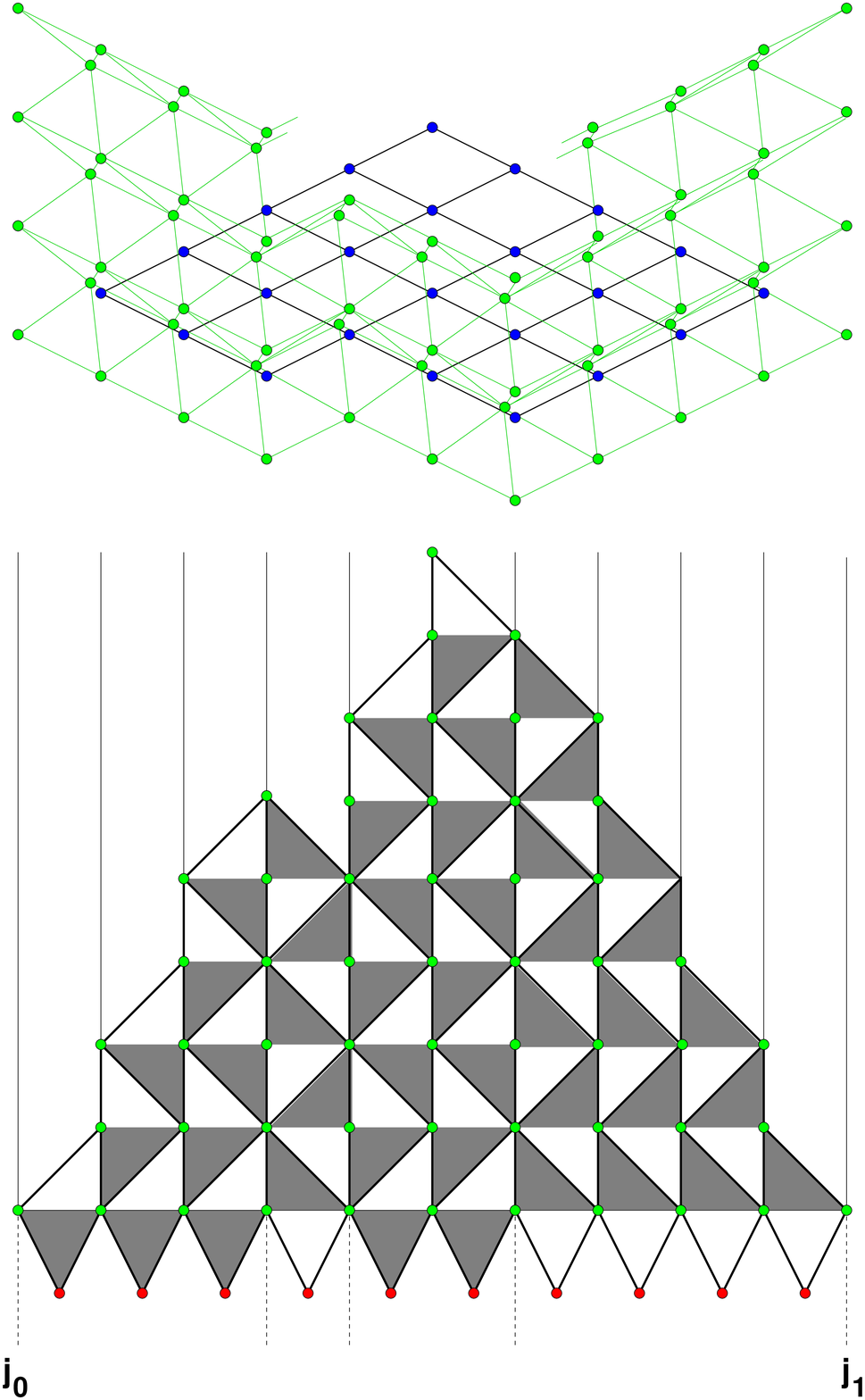}}} $$ 
Here the vertical wall stepped surface is represented in front, with green vertices. 
The centers of the boxes of $\lambda$ are the blue vertices in the bottom plane ($i=1$).
In the lozenge decomposition of the stepped surface, we have only represented the lozenges that will contribute to the 
solution within $\lambda$, and added as before triangles in the bottom row, with their bottom-most vertex assigned value $1$
(the $A_{\infty/2}$ boundary condition \eqref{plus}).
Note also that the boundary vertical walls intersect the plane $i=1$ along the boxes of $\lambda^*\setminus \lambda$.

The difference with the situation of the steepest stepped surface is subtle: in both cases, the walls are made uniquely of either $U$ type of $V$ type lozenges, but their arrangement (the order in which they come from left to right) is different, due to the rules \eqref{eqrules}.
We may now translate the lozenge decomposition into a network, which we deform in the same way as before to finally get:
$$   \raisebox{-1.3cm}{\hbox{\epsfxsize=16.cm \epsfbox{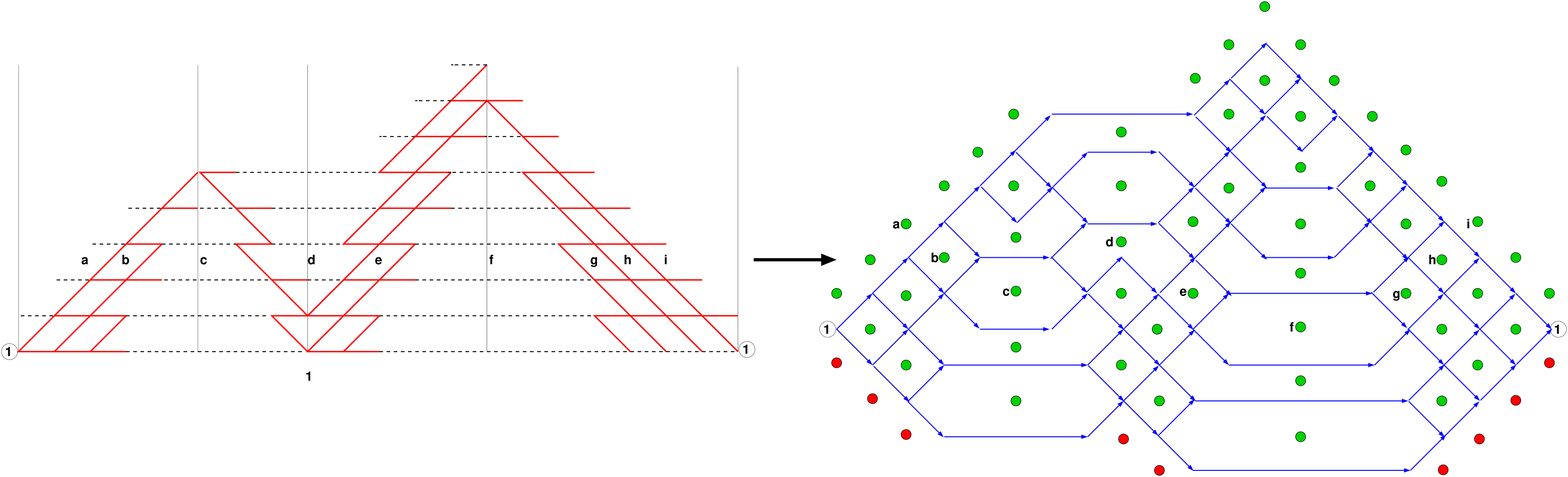}}}  $$
where we have indicated the correspondence between a sample row of face weights (green dots  in general)
with assigned values $t_{i,j}=a,b,c,...$, and the bottom outer faces by red dots (with assigned values $1$). 
The edge weights in the network are 
related to the face variables in the usual way \eqref{rulnet}, and all the horizontal edges receive the weight $1$. 
The construction of the network looks more complicated, but again the solution at the box $(1,1)$ of $\lambda$ is the partition function for paths on this network, from the leftmost to the rightmost vertex. In fact, it is possible to deform the network to make it match the shape of the Young 
diagram $\lambda$, at the expense of adding some extra edges as follows:
$$  \raisebox{-1.3cm}{\hbox{\epsfxsize=9.cm \epsfbox{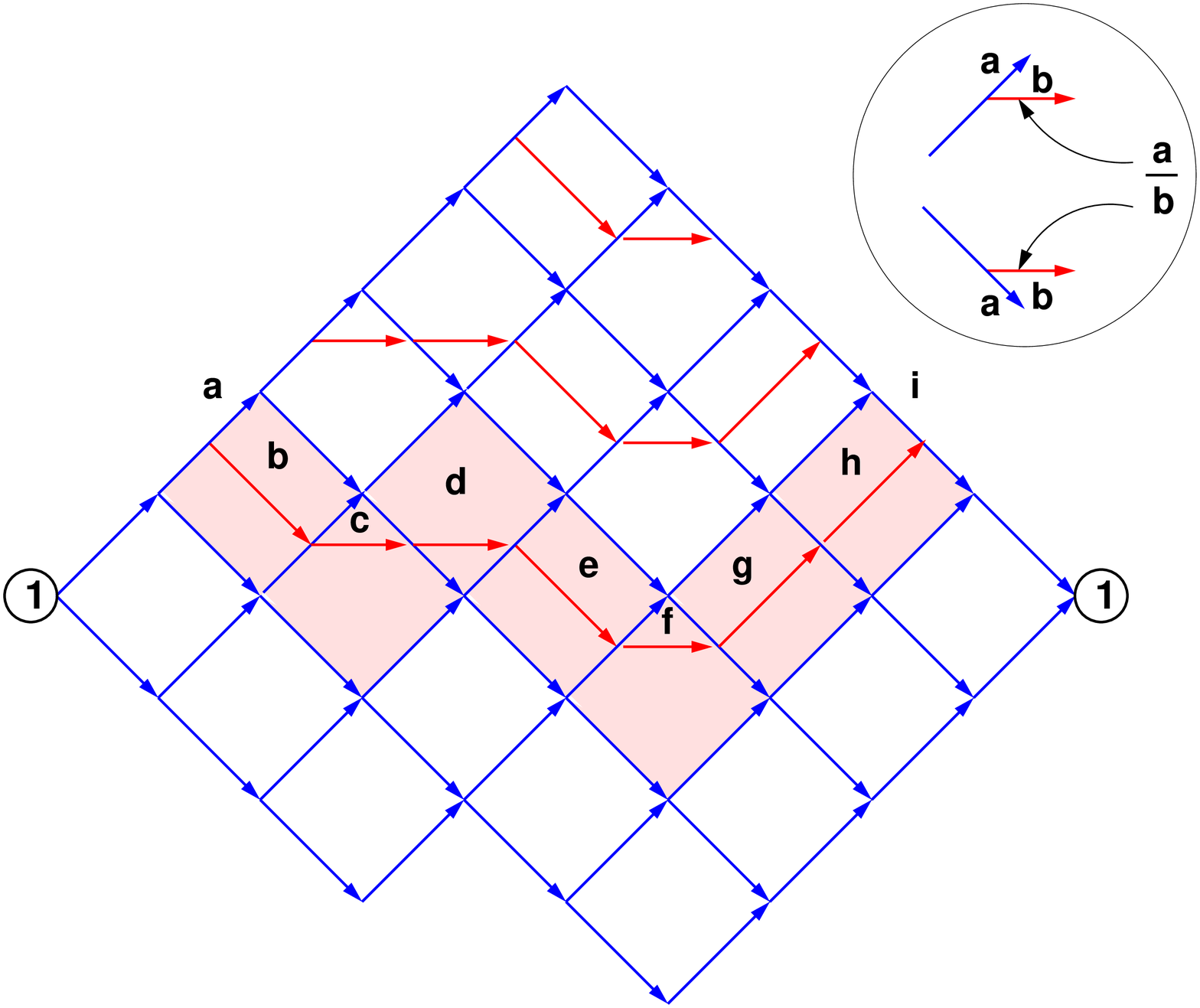}}}$$
where the up/down edge weights are related to the face variables in the usual way, while the new horizontal edges have weights as indicated in the medallion. The additional arrows (in red) split up each box of the strip decomposition $\{ \lambda^{(i)}\setminus\lambda^{(i+1)}\}_{i\in [2,k-1]}$ of 
$\lambda$, where $\lambda^{(i)}$ is the $i$-th layer of the pyramid $P_\lambda$,  by connecting centers of box edges (we have shaded the strip
$\lambda^{(2)}\setminus\lambda^{(3)}$ in the above example). In particular all paths of red arrows are parallel. Note also that the first strip
$\lambda^{(1)}\setminus\lambda^{(2)}$ is not split.
Let us denote by ${\tilde {\mathcal N}}_{a,b}$
the network thus constructed out of the partition $\lambda_{a,b}$.

Having assigned fixed parameters $t_{i,j}$ to each
inner and outer face of the resulting network, we may associate a Laurent polynomial $q_{a,b,m}(t_\cdot)$ to the $(a,b)$ box of the $m$-th layer 
$\lambda^{(m)}$ of the pyramid of $\lambda$, equal to the corresponding $T$-system solution with vertical wall 
initial data stepped surface. Then we have:

\begin{thm}
The Laurent polynomial $q_{a,b,m}(t_\cdot)$ is the partition function of $m$ non-intersecting paths on ${\tilde {\mathcal N}}_{a,b}$, starting/ending
at the left/right projections of the top vertices of the boxes in the square with top box $(a,b)$ and size $m$, multiplied by the factor
$\prod_{\alpha=2}^m t_{1,\ell_\al}^{-1}\prod_{\beta=1}^m t_{1,r_\beta}$, where $\ell_\alpha$/$r_\alpha$ is the $j$ coordinate in the FCC lattice of the $\alpha$-th left/right projection of the points in the square of size $m$ with NW corner at $(a,b)$.
\end{thm}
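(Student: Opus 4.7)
The plan is to run the same pipeline used for the steepest-stepped-surface case (Theorems \ref{thpath} and \ref{genBS}), but with the vertical-wall initial data. The essential ingredient is the non-intersecting paths theorem of Section 5.1 (from \cite{DF}), so the argument reduces to identifying the relevant network, its projection data, and checking that the displayed deformation into ${\tilde{\mathcal N}}_{a,b}$ preserves weights.

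First I would invoke the general solution theorem of Section 5.1 directly: since $q_{a,b,m}(t_\cdot)$ is by definition the $T$-system solution $T_{m,j,k}$ at the vertex of $L_{FCC}$ corresponding to the box $(a,b)$ of $\lambda^{(m)}$, with $(j,k)$ obtained from $(a,b)$ by the same change of coordinates used in Theorem \ref{pT}. That theorem already expresses $T_{m,j,k}$ as the partition function of $m$ non-intersecting paths on the network $N(\ell_1,r_1)$ with starting points $(\ell_\alpha,k_{\ell_\alpha})$ and endpoints $(r_\alpha,k_{r_\alpha})$, multiplied by exactly the boundary factor $\prod_{\alpha=2}^m t_{1,\ell_\alpha}^{-1}\prod_{\beta=1}^m t_{1,r_\beta}$ appearing in the statement. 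So the only remaining content is to identify $N(\ell_1,r_1)$ with ${\tilde{\mathcal N}}_{a,b}$ and to check that the left/right projections $\ell_\alpha,r_\alpha$ are precisely those of the top vertices of the boxes in the size-$m$ square with NW corner $(a,b)$.

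Next I would read off the left/right projections directly from the vertical-wall surface. The intersection of this stepped surface with the plane $i=1$ is, by construction, exactly the border strip $\lambda^*\setminus\lambda$, the same as in the steepest case, so the determination of $\ell_\alpha,r_\alpha$ as projections of the top vertices of the square-$m$ array is identical to what was done for Theorem \ref{genBS} in Section 5.3. This step is purely geometric and routine.

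The substantive step is to verify that the network coming from the lozenge decomposition of the vertical-wall surface, as drawn in the first network picture of Section 6, is equivalent as a weighted path model to the deformed network ${\tilde{\mathcal N}}_{a,b}$ that sits on the shape of $\lambda$. I would do this in two sub-steps. First, apply the face/edge weighting rule \eqref{rulnet} to each $U$ or $V$ lozenge of the decomposition to obtain the raw network, observing as the author does that the horizontal edges all carry weight $1$. Second, realize the planar isotopy that pushes this network onto $\lambda$: each strip $\lambda^{(i)}\setminus\lambda^{(i+1)}$ of the pyramid of $\lambda$ absorbs the arrangement of lozenges belonging to the $i$-th layer of vertical walls; the "extra" red arrows are precisely the images, under this isotopy, of the horizontal edges that had to be carried over to keep the graph planar, and their weights, read off from the formulas in \eqref{rep2UVi}, are what appears in the medallion. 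That $\lambda^{(1)}\setminus\lambda^{(2)}$ is not split reflects the fact that the bottom layer of lozenges in the vertical-wall decomposition is flush with the $i=1$ plane and needs no extra connector edges.

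The main obstacle I expect is precisely this weight-preserving isotopy check: one must keep careful track of which face parameter $t_{i,j}$ ends up at which vertex of ${\tilde{\mathcal N}}_{a,b}$ after the deformation, and of the order of the $U$- and $V$-type lozenges in each row, since the vertical-wall and steepest surfaces differ only in this ordering (as noted right after \eqref{eqrules}). Once this identification of networks is made, the statement follows immediately: the general Section 5.1 theorem supplies the non-intersecting paths expression with the stated boundary factor, and the isotopy turns the abstract $N(\ell_1,r_1)$ into ${\tilde{\mathcal N}}_{a,b}$ without altering any path weight, completing the proof.
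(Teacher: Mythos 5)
Your proposal is correct and follows essentially the same route as the paper, which presents this theorem as an immediate consequence of the general non-intersecting-paths solution of Section 5.1 applied to the vertical-wall initial data, after the lozenge decomposition has been translated into a network and deformed (weight-preservingly, with the extra red edges) onto the shape of $\lambda$ to give ${\tilde {\mathcal N}}_{a,b}$. Your observation that the left/right projections depend only on the intersection of the stepped surface with the $i=1$ plane, and hence coincide with those of the steepest-surface case, is exactly the point that makes the reduction work.
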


\begin{example}
Let us consider the case $\lambda=(2,2)$. Let us assign the following initial values on the vertical walls:
$$\begin{matrix}
 & & T_{4,0,1}=\ell & & \\
& T_{3,-1,1}=i & T_{3,0,0}=j & T_{3,1,1}=k & \\
 & T_{2,-1,2}=f & T_{2,0,1}=g & T_{2,1,2}=h & \\
T_{1, -2, 2}= a & T_{1, -1, 1}= b & T_{1, 0, 0}= c & T_{1,1,1}=d & T_{1,2,2}=e 
\end{matrix}$$
Then the network ${\tilde {\mathcal N}}_{1,1}$ reads, with the face variables or alternatively the edge weights:
$$  \raisebox{-1.3cm}{\hbox{\epsfxsize=10.cm \epsfbox{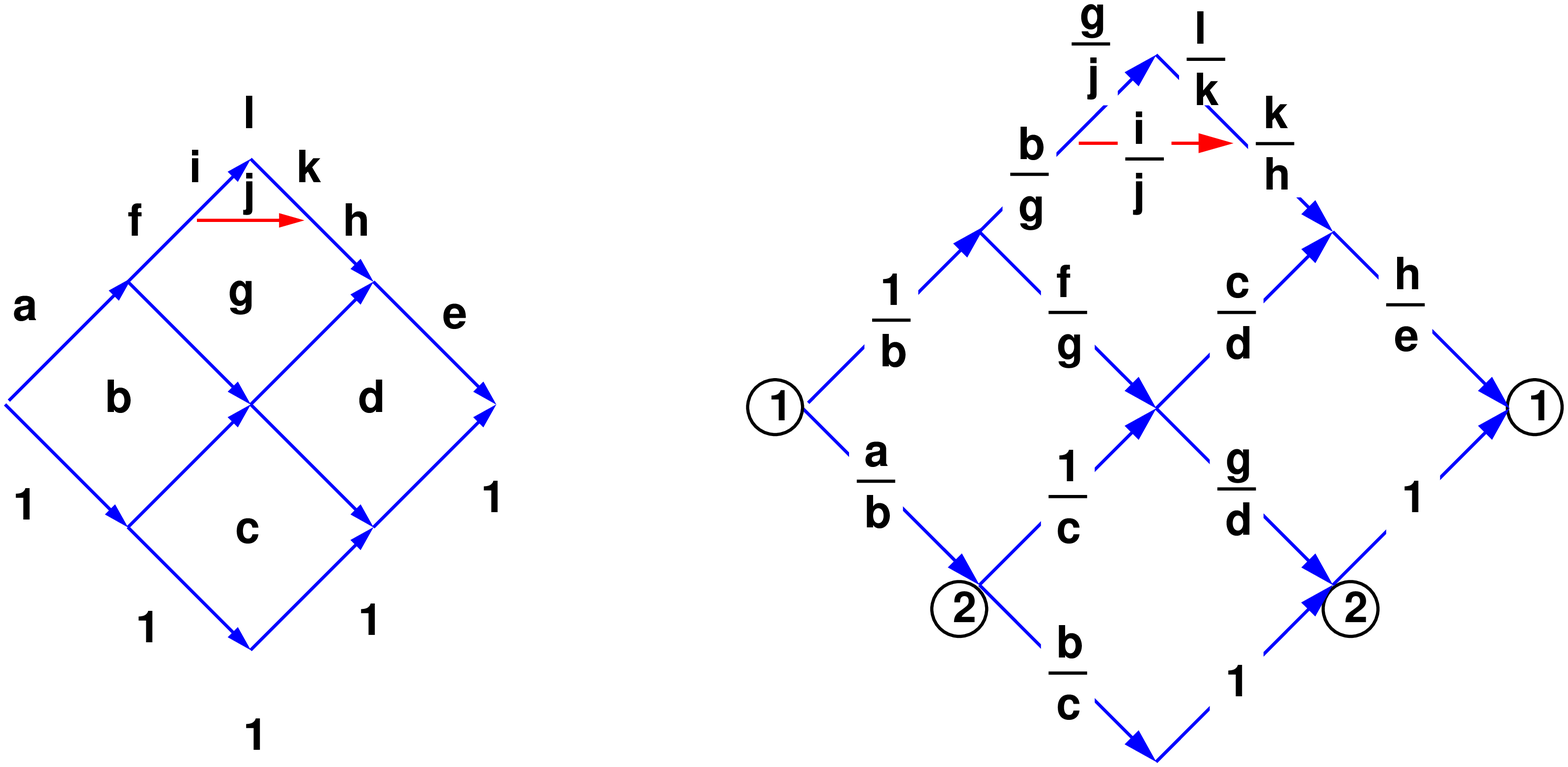}}}$$
Let us first compute $q_{a,b}=q_{a,b,1}$ for the boxes for the Young diagram $\lambda$.
The quantities $q_{1,1}/e,q_{1,2}/e,q_{2,1}/d,q_{2,2}/d$ are the partition functions for paths on ${\tilde {\mathcal N}}_{1,1}$,
respectively from vertices $1\to 1$, $2\to 1$, $1\to 2$ and $2\to 2$. This gives:
$$ \begin{matrix} q_{1,1}=
\frac{a e}{c} + \frac{e f}{b d} + \frac{a e g}{b c d} + \frac{a h}{b d} + \frac{c f h}{
 b d g} + \frac{t}{y} + \frac{x z}{g y} & q_{1,2}=\frac{b e}{c} + \frac{e g}{c d} + \frac{h}{d} \\
 q_{2,1}= \frac{a d}{c} +  \frac{f}{b} +  \frac{a g}{b c} & q_{2,2}=\frac{b d}{c} + \frac{g}{c}
 \end{matrix} $$
 Finally the quantity $q_{1,1,2} b/(d e)$ is the partition function for pairs of non-intersecting paths from $1,2$ to $1,2$:
 $$ q_{1,1,2} = \frac{f h}{g} + \frac{b d t}{c y} + \frac{g t}{c y} +\frac{x z}{c y}+\frac{b d x z}{ c g y} $$
\end{example}

%


\end{document}